\DeclareFontFamily{U}{mathb}{\hyphenchar\font45}
\DeclareFontShape{U}{mathb}{m}{n}{
      <5> <6> <7> <8> <9> <10> gen * mathb
      <10.95> mathb10 <12> <14.4> <17.28> <20.74> <24.88> mathb12
}{}
\DeclareSymbolFont{mathb}{U}{mathb}{m}{n}
\DeclareMathSymbol{\llcurly}{3}{mathb}{"CE}
\DeclareMathSymbol{\ggcurly}{3}{mathb}{"CF}
\DeclareFontFamily{U}{matha}{\hyphenchar\font45}
\DeclareFontShape{U}{matha}{m}{n}{
      <5> <6> <7> <8> <9> <10> gen * matha
      <10.95> matha10 <12> <14.4> <17.28> <20.74> <24.88> matha12
      }{}
\DeclareSymbolFont{matha}{U}{matha}{m}{n}
\DeclareMathSymbol{\curlywedge} {2}{matha}{"4E}
\DeclareMathSymbol{\curlyvee} {2}{matha}{"4F}
\newcommand{\cellsize}{13}
\newlength{\cellsz} \setlength{\cellsz}{\cellsize\unitlength}
\newsavebox{\cell}
\sbox{\cell}{\begin{picture}(\cellsize,\cellsize)
\put(0,0){\line(1,0){\cellsize}}
\put(0,0){\line(0,1){\cellsize}}
\put(\cellsize,0){\line(0,1){\cellsize}}
\put(0,\cellsize){\line(1,0){\cellsize}}
\end{picture}}
\newcommand\cellify[1]{\def\thearg{#1}\def\nothing{}%
\ifx\thearg\nothing
\vrule width0pt height\cellsz depth0pt\else
\hbox to 0pt{\usebox{\cell} \hss}\fi%
\vbox to \cellsz{
\vss
\hbox to \cellsz{\hss$#1$\hss}
\vss}}
\newcommand\tableau[1]{\vtop{\let\\\cr
\baselineskip -16000pt \lineskiplimit 16000pt \lineskip 0pt
\ialign{&\cellify{##}\cr#1\crcr}}}
\theoremstyle{plain}
\newtheorem{thm}{Theorem}[section]
\newtheorem{lem}[thm]{Lemma}
\newtheorem{cor}[thm]{Corollary}
\newtheorem{prop}[thm]{Proposition}
\theoremstyle{definition}
\newtheorem{defn}[thm]{Definition}
\newtheorem{remark}[thm]{Remark}
\newtheorem{ex}[thm]{Example}
\newtheorem*{repp@ex}{\repp@title (continued)}
\newcommand{\newreppex}[2]{
\newenvironment{repp#1}[1]{
 \def\repp@title{#2 \ref{##1}}
 \begin{repp@ex}}
 {\end{repp@ex}}}
\newenvironment{myquote}
   {\begin{list}{}{\setlength{\leftmargin}{.35in}\setlength{\rightmargin}{0in}}
    \item\relax}
   {\end{list}}
\newcommand{\mf}[1]{\mbox{$\mathfrak #1$}}
\newcommand{\gap}{{\sf gap}}
\newcommand{\gapp}{{\sf gap_{pos}}}
\newcommand{\gapv}{{\sf gap_{val}}}
\newcommand{\av}{\mathfrak{S}}
\renewcommand{\read}{{\sf{read}}}
\title{Reduced word manipulation: patterns and enumeration}
\author[Bridget Eileen Tenner]{Bridget Eileen Tenner$^{\dagger}$}
\address{Department of Mathematical Sciences, DePaul University, Chicago, IL 60614}
\email{bridget@math.depaul.edu}
\thanks{$^{\dagger}$ Research partially supported by a Simons Foundation Collaboration Grant for Mathematicians.}
\subjclass[2010]{Primary: 05A05; Secondary: 05E15, 05A19}
\begin{document}

\begin{abstract}
We develop the technique of reduced word manipulation to give a range of results concerning reduced words and permutations more generally. We prove a broad connection between pattern containment and reduced words, which specializes to our previous work for vexillary permutations. We also analyze general tilings of Elnitsky's polygon, and demonstrate that these are closely related to the patterns in a permutation. Building on previous work for commutation classes, we show that reduced word enumeration is monotonically increasing with respect to pattern containment. Finally, we give several applications of this work. We show that a permutation and a pattern have equally many reduced words if and only if they have the same length (equivalently, the same number of $21$-patterns), and that they have equally many commutation classes if and only if they have the same number of $321$-patterns. We also apply our techniques to enumeration problems of pattern avoidance, and give a bijection between $132$-avoiding permutations of a given length and partitions of that same size, as well as refinements of this data and a connection to the Catalan numbers.\\

\noindent \emph{Keywords:} reduced word, permutation, permutation pattern, commutation class, Coxeter group, enumeration, partition, dominant permutation, Catalan number
\end{abstract}

\maketitle

\section{Introduction}

The reduced words of a permutation $w$ are strings of positive integers that describe the ways to write $w$ as a product of adjacent transpositions. This data has connections to Coxeter groups of other types and an influence on the structure of various objects related to permutations. Additionally, the Bruhat order on the symmetric group can be defined in terms of reduced words, and thus they are key to understanding the structure of this important poset (see, for example, \cite{dyer, hultman1, hultman2, hultman vorwerk, ragnarsson-tenner1, tenner patt-bru} for progress on elucidating this architecture and that of related objects).

In \cite{tenner rdpp}, we revealed a powerful relationship between reduced words and permutation patterns in the case of a vexillary permutation. That result had a range of algebraic, topological, and enumerative applications, addressing questions about commutation classes of reduced words, tilings of certain families of polygons, and the topological structure of a family of algebraic objects. In the present work, we expand the manipulative techniques on reduced words that we debuted in \cite{tenner rdpp}. This will enable us to show that the main result of \cite{tenner rdpp} is a special case of a broader phenomenon. Moreover, we employ these methods and that general phenomenon to give enumerative results about reduced words and commutation classes of permutations and the patterns that contain them, extending Stanley's analysis of reduced word enumeration in \cite{stanley} and our own work in \cite{tenner patt-bru, tenner rdpp}.

There are many ways to represent permutations, including reduced words and one-line notation. The latter is the most natural presentation for questions about patterns, a concept that gained broad attention after work of Simion and Schmidt \cite{simion schmidt}, and that has since become popular in many guises (see, for example, Kitaev's text \cite{kitaev} and the various special journal issues devoted to the topic, including \cite{AOC2007}). The first hint of a direct connection between reduced words and permutation patterns appeared in \cite{bjs}, where Billey, Jockusch, and Stanley proved that $321$-avoidance was equivalent to avoiding $i(i\pm1)i$ factors in reduced words. We gave a broader relationship between reduced words and permutation patterns in \cite{tenner rdpp}, showing that a permutation $p$ is vexillary if and only if, roughly speaking, reduced words of $p$ can appear as isolated factors in reduced words of $w$, for all $w$ containing $p$. That result had a number of applications, notably involving tilings of the polygon $X(w)$, whose rhombic tilings were shown to be in bijection with commutation classes of reduced words of $w$ by Elnitsky in \cite{elnitsky}.

The ``manipulation'' in the title of this paper refers to the techniques we employ in this work. More precisely, our approach to analyzing features of a permutation (and its reduced words) will often include shortening it via a sequence of targeted position and value swaps, where these swaps correspond to right and left multiplication (respectively) by adjacent transpositions. This technique can be highly revealing about the reduced words of a permutation, as we shall see in subsequent sections, and can expose enumerative phenomena that have been hitherto unrecognized.

In Section~\ref{sec:definitions} of this paper, we discuss terminology and symbols relevant to this work, including examples of the basic objects under examination here. Section~\ref{sec:patt redwd} introduces the main manipulative techniques we apply to reduced words and gives the first main result of these efforts in Theorem~\ref{thm:patt redwd}. That theorem gives the broad relationship described earlier, between containment of a $p$-pattern and the influence of $p$'s reduced words on the reduced words of the larger permutation. The next two sections of the paper are concerned with applications of Theorem~\ref{thm:patt redwd} and, more generally, with the manipulative techniques used in the proof of that result. To be precise, Section~\ref{sec:comb apps} shows how pattern containment influences the tiling of a particular polygon used in the study of Coxeter groups, with the culminating result given in Corollary~\ref{cor:isolated paw patterns}. In Section~\ref{sec:enum apps}, we apply our manipulative techniques to enumerative questions about commutation classes of permutations and to the number of reduced words of a permutation as they relate to those of its patterns. Each of these quantities is monotonically increasing with respect to pattern containment (Proposition~\ref{prop:C mono} and Theorem~\ref{thm:R mono}), and we can describe exactly when equality occurs in each case (Theorems~\ref{thm:equal R} and~\ref{thm:equal C}). We also show how our methods can be employed to enumeration problems of pattern avoidance, and demonstrate this by giving a bijection between $132$-avoiding permutations of length $\ell$ and partitions of $\ell$ (Theorem~\ref{thm:132 by length}). This bijection has additional implications, related to the number of restricted partitions and to the Catalan numbers (Corollaries~\ref{cor:restricted partition} and~\ref{cor:refining catalan}).

\section{Definitions and notation}\label{sec:definitions}

We are concerned with the symmetric group, also known as the finite Coxeter group of type $A$. In this section, we introduce the relevant definitions and notation, and additional background on these objects can be found in the literature, such as in \cite{bjorner brenti, macdonald}.

We write $\mf{S}_n$ for the symmetric group on $\{1,\ldots,n\}$, and a permutation $w \in \mf{S}_n$ is represented in \emph{one-line notation} as the word $w = w(1)w(2)\cdots w(n)$.

The \emph{adjacent transpositions} in $\mf{S}_n$ are $\{\sigma_i: 1 \le i < n\}$, where $\sigma_i$ transposes $i$ and $i+1$ and fixes all other letters. These involutions generate $\mf{S}_n$, and obey the \emph{Coxeter relations}
\begin{eqnarray}
& \sigma_i\sigma_j = \sigma_j\sigma_i &\text{ if } |i-j| > 1, \text{ and}\label{eqn:commutation}\\
& \sigma_i\sigma_{i+1}\sigma_i = \sigma_{i+1}\sigma_i\sigma_{i+1} &\text{ for } 1 \le i \le n-2.\label{eqn:braid}
\end{eqnarray}
The relation in equation~\eqref{eqn:commutation} is a \emph{commutation}, and the relation in equation~\eqref{eqn:braid} is a \emph{braid}. Because $\{\sigma_i: 1 \le i < n\}$ generates $\mf{S}_n$, each $w \in \mf{S}_n$ can be written as a product of adjacent transpositions, where we view permutations as maps and thus $\sigma_iv$ transposes the positions of the values $i$ and $i+1$ in $v$, whereas $v\sigma_i$ transposes the values in positions $i$ and $i+1$ in $v$.

\begin{defn}
If $w = \sigma_{i_1} \cdots \sigma_{i_{\ell}}$ for $\ell$ minimal, then $\ell = \ell(w)$ is the \emph{length} of $w$, the product
$$\sigma_{i_1} \cdots \sigma_{i_{\ell}}$$
is a \emph{reduced decomposition} of $w$, and the string
$${\sf i_1\cdots i_{\ell}}$$
is a \emph{reduced word} of $w$. The set of all reduced words of $w$ is denoted $R(w)$.
\end{defn}

We will use sans-serif typeface for reduced words, thus distinguishing them from permutations in one-line notation.

Note that length can be calculated by counting inversions: $\ell(w) = \big|\{i<j : w(i) > w(j)\}\big|$.

Reduced decompositions and reduced words are in obvious bijection with each other, and the terms ``commutation'' and ``braid'' transfer to the context of reduced words in the natural way.

\begin{ex}\label{ex:3241}
Consider $3241 \in \mf{S}_4$. Its reduced decompositions are $\sigma_2\sigma_1\sigma_2\sigma_3$, $\sigma_1\sigma_2\sigma_1\sigma_3$, and $\sigma_1\sigma_2\sigma_3\sigma_1$, and thus
$$R(3241) = \{{\sf 2123}, {\sf 1213}, {\sf 1231}\}.$$
The first two elements of $R(3241)$ differ by a braid, while the latter two elements differ by a commutation.
\end{ex}

Reduced words are strings, and we will sometimes view them as nothing more than that.

\begin{defn}
A \emph{factor} in a string is a consecutive substring. \emph{Shifting} a string \emph{by $x$} means adding a fixed value $x$ to each symbol in the string. When specificity is not needed, we will suppress the ``by $x$.''
\end{defn}

A permutation in one-line notation is itself a string, and we will have use for the following concept.

\begin{defn}
Two strings, whose letters are drawn from a totally ordered set, are \emph{isomorphic} if their symbols appear in the same relative order. We use $\approx$ to denote this (equivalence) relation.
\end{defn}

\begin{ex}
$42135 \approx 82(-4)\pi\sqrt{95}$.
\end{ex}

The notion of isomorphic strings is central to the definition of permutation patterns.

\begin{defn}
Fix $p \in \mf{S}_k$. A permutation $w$ \emph{contains} a \emph{$p$-pattern}, denoted $p \prec w$, if $p \approx w(i_1)\cdots w(i_k)$ for some $i_1 < \cdots < i_k$. This $w(i_1)\cdots w(i_k)$ is an \emph{occurrence} of $p$, and we may denote $w(i_j)$ by $\langle p(j) \rangle$. If $w$ does not contain a $p$-pattern, then $w$ \emph{avoids} $p$ or is \emph{$p$-avoiding}, denoted $p \not\prec w$. For a set of patterns $P$, write
$$P \llcurly w \ \ \Longleftrightarrow \ \ p \prec w \text{ for all } p \in P.$$
\end{defn}

Pattern containment depends on there being at least one occurrence of the pattern, but the (positive) multiplicity of occurrences is unimportant.

\begin{ex}
The permutation $42135$ contains the pattern $213$, and it does so in five ways: $425 \approx 415 \approx 435 \approx 213 \approx 215$. The permutation $45132$ is $213$-avoiding, so $213\not\prec 45132$.
\end{ex}

Theorem~\ref{thm:patt redwd} of this article gives a potential relationship between elements of $R(p)$ and elements of $R(w)$ when $p \prec w$. That result, which will specialize to the main result of \cite{tenner rdpp}, involves an object that is most easily described as a set of the ``barred'' patterns introduced by West in \cite{west}.

\begin{defn}
A \emph{barred pattern} $\overline{p}$ is a permutation $p$ in which some letters are decorated by bars. A permutation $w$ \emph{contains} $\overline{p}$, denoted $\overline{p} \prec w$, if $w$ has an occurrence of the undecorated portion of $\overline{p}$ that is not also part of a larger $p$-pattern.
\end{defn}

\begin{ex}
Let $w = 42135$ and $\overline{p} = 3\overline{2}14$. Thus $3\overline{2}14 \prec 42135$ because $\langle 213 \rangle = 213$ is not part of any $3214$-pattern in $w$. We could not have drawn this conclusion from $\langle 213 \rangle = 415$, because of $\langle 3214 \rangle = 4215$ in $w$.
\end{ex}

Note that barred patterns can also be expressed in the more general language of mesh patterns, as introduced in \cite{branden claesson}.

Our work in \cite{tenner rdpp} gave a new definition of vexillary permutations. The feature of non-vexillary permutations that caused trouble was that a $2143$-pattern could be ``spread out,'' both in position and value by inserting a letter between the $1$ and the $4$ to yield the permutation $21354$. Let us capture this phenomenon more precisely.

\begin{defn}\label{defn:spread}
Fix $p \in \mf{S}_k$. A barred pattern $\overline{q}$ is a \emph{spread} of $p$ if its undecorated portion is isomorphic to $p$ itself and if $q \in \mf{S}_{k+1}$ is obtained by inserting a letter (barred in $\overline{q}$) to transform a $2143$-pattern in $p$ into a $21354$ pattern. In the degenerate case, when $p$ avoids $2143$, the only \emph{spread} of $p$ is $p$ itself. Write $p^+$ for the set of spreads of $p$.
\end{defn}

Note that a permutation may have more than one spread.

\begin{ex}
$251364^+ = \{261\overline{3}475, 2614\overline{3}75, 261\overline{4}375, 2613\overline{4}75\}$.
\end{ex}

\section{Influence of patterns on reduced words}\label{sec:patt redwd}

The main result of \cite{tenner rdpp} was a relationship between containing a pattern $p$ and implications for one's reduced words in terms of the reduced words of $p$. That relationship, presented here as Corollary~\ref{cor:rdpp main}, held if and only if $p$ was vexillary; that is, if and only if $p$ avoided $2143$. Vexillary permutations were introduced independently by Lascoux and Sch\"utzenberger in \cite{lascoux schutzenberger} and by Stanley in \cite{stanley}. There are several equivalent definitions of vexillarity, such as $2143$-avoidance, the main result of \cite{tenner rdpp}, and others as discussed in \cite{macdonald}.

In Theorem~\ref{thm:patt redwd} below, we establish that elements of $R(p)$ appear as particular factors in elements of $R(w)$ if and only if $p^+ \llcurly w$. Moreover, these factors should not be unduly influenced by their prefix or suffix, as expressed in the following definitions.

\begin{defn}\label{defn:isolation}
Fix $w \in \mf{S}_n$ and consider ${\sf s} \in R(w)$, where ${\sf s} = {\sf abc}$ is the concatenation of three (possibly empty) strings with ${\sf a} \in R(u)$ and ${\sf c} \in R(v)$. The factor ${\sf b}$ is \emph{isolated on $[m,m']$} in ${\sf s}$ if 
there are integers $m \le r \le t \le m'$ such that
\begin{enumerate}
\item the smallest letter appearing in ${\sf b}$ is $r$ and the largest letter appearing in ${\sf b}$ is $t$,
\item $\ell(u \sigma_i)> \ell(u)$ and $\ell(\sigma_i v) > \ell(v)$ for all $i \in [r,t]$, and
\item there are two particular increasing patterns in each of $u$ and $v$:
\begin{enumerate}
\item in $u$, an increasing sequence of length $r-m+1$ ends in position $r$, and an increasing sequence of length $m' - t + 1$ begins in position $t+1$, and
\item in $v$, an increasing sequence of length $r-m+1$ ends with the value $r$, and an increasing sequence of length $m'-t+1$ begins with the value $t+1$.
\end{enumerate}
\end{enumerate}
\end{defn}

While the concept of a factor is relatively common, isolation is unusual enough -- and important enough in the context of this work -- to merit some examples.

\begin{ex}
Consider the reduced word $\sf{s} = \sf{12325} \in R(243165)$.
\begin{enumerate}\renewcommand{\labelenumi}{(\alph{enumi})}
\item Define the factor $\sf{b} = \sf{2}$ so that $\sf{a} = \sf{1}$ and $\sf{c} = \sf{325}$. Thus $r = t = 2$. This $\sf{b}$ is not isolated on $[1,2]$ in $\sf{s}$ because, in the language of Definition~\ref{defn:isolation}, $u = 213456$ and $\ell(u\sigma_1) < \ell(u)$.
\item Now define the factor $\sf{b} = \sf{2}$ so that $\sf{a} = \sf{123}$ and $\sf{c} = \sf{5}$. Thus $r = t = 2$. This $\sf{b}$ is isolated on $[1,2]$ in $\sf{s}$ because $u = 234156$ and $v = 123465$, and both $\ell(u\sigma_i) > \ell(u)$ and $\ell(\sigma_iv) > \ell(v)$ for all $i \in [1,2]$.
\end{enumerate}
\end{ex}

Note that Definition~\ref{defn:isolation} fixes a mistake in \cite{tenner rdpp}, in which it was inadvertently assumed that patterns would not begin or end with fixed points. In the language of Definition~\ref{defn:isolation} above, we would have $m = r$ and $m' = t$ for such a situation, and so requirement (3) of the definition would be trivial. It should be noted that the applications of the main result of \cite{tenner rdpp} are unaffected by this correction, and many of them do not even involve patterns with initial or concluding fixed points. Also, if we were to think of permutations as actions on $\mathbb{Z}$ that involve only finitely many elements, then requirement (3) of Definition~\ref{defn:isolation} would again be trivial, because there would be increasing sequences of infinite length beginning or ending with any position or value.

\begin{ex}\
\begin{enumerate}\renewcommand{\labelenumi}{(\alph{enumi})}
\item Consider the reduced word $\sf{23} \in R(13425)$. Let $\sf{b} = 3$, $\sf{a} = 2$, and $\sf{c} = \emptyset$. Thus $r = t = 3$. Consider $m = 2$ and $m' = 4$. In the language of Definition~\ref{defn:isolation}, $u = 13245$ and $v = 12345$. First note that $\ell(u\sigma_3) > \ell(u)$ and $\ell(\sigma_3v) > \ell(v)$. Also, $u$ has an increasing sequence of length $r-m+1 = 2$ ending in position $r = 3$ (the sequence $12$), and an increasing sequence of length $m' - t + 1 = 2$ beginning in position $t + 1 = 4$ (the sequence $45$). Similarly, $v$ has the necessary increasing sequences. Thus this $\sf{b}$ is isolated on $[2,4]$ in $\sf{23}$.
\item In contrast, consider the reduced word $\sf{234} \in R(13452)$. Again let $\sf{b} = 3$ and $\sf{a} = 2$, and so we now have $\sf{c} = 4$. Again $r = t = 3$, and we continue to consider $m = 2$ and $m' = 4$. In the language of Definition~\ref{defn:isolation}, $u = 13245$ and $v = 12354$. As before, $\ell(u\sigma_3) > \ell(u)$ and $\ell(\sigma_3v) > \ell(v)$, and although $u$ has the required increasing sequences, the permutation $v$ does not; specifically, it has no increasing sequence whose minimum value is $t + 1 = 4$. Thus this $\sf{b}$ is not isolated on $[2,4]$ in $\sf{234}$.
\end{enumerate}
\end{ex}

In our usage, the range $[m,m']$ of Definition~\ref{defn:isolation} will be predetermined, as described below.

\begin{defn}
Fix $p \in \mf{S}_k$. A reduced word for $p$ that has been shifted by $x$ is \emph{isolated} in some ${\sf s}$ if it is isolated on $[x+1,x+k-1]$ in ${\sf s}$.
\end{defn}

The importance of isolation arises from the following result, which describes when pattern containment might be affected by products of adjacent transpositions.

\begin{lem}\label{lem:extending a pattern}
Fix permutations $p$ and $w$ for which $w$ has a $p$-pattern occupying positions $P$ and using values $V$. If $\{i, i+1\}\not\subseteq V$, then
$$p \prec \sigma_iw.$$
If $\{j, j+1\}\not\subseteq P$, then
$$p \prec w\sigma_j.$$
\end{lem}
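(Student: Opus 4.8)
The plan is to prove the two statements separately, exploiting the symmetry between positions and values (i.e., the fact that the second assertion follows from the first applied to $w^{-1}$, since $p \prec w \Leftrightarrow p^{-1} \prec w^{-1}$, left multiplication becomes right multiplication under inverse, and the roles of $P$ and $V$ swap). So I would focus on the position-swap claim: if $w$ has a $p$-pattern in positions $P$ using values $V$ and $\{j,j+1\}\not\subseteq P$, then $p \prec w\sigma_j$. The permutation $w\sigma_j$ is obtained from $w$ by transposing the entries in positions $j$ and $j+1$ of the one-line notation.

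The key observation is a case split on how $\{j,j+1\}$ meets $P$. If neither $j$ nor $j+1$ lies in $P$, then the entries $w(j)$ and $w(j+1)$ are not among the values $V$ used by the occurrence, and more importantly the positions $j, j+1$ are not among the positions $P$; swapping them does not disturb the subword of $w$ indexed by $P$ at all, so the same set of positions $P$ still carries an occurrence of $p$ in $w\sigma_j$, and hence $p \prec w\sigma_j$. If exactly one of $j, j+1$ is in $P$ — say $j \in P$ but $j+1 \notin P$ (the other case is symmetric) — then let $i_1 < \cdots < i_k$ be the positions in $P$ with $j = i_r$ for some $r$; I claim the positions $i_1 < \cdots < i_{r-1} < (j+1) < i_{r+1} < \cdots < i_k$ (obtained by replacing $i_r = j$ with $j+1$) carry an occurrence of $p$ in $w\sigma_j$. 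Indeed, $(w\sigma_j)(j+1) = w(j)$, and since $j+1 \notin P$ we have $i_{r+1} > j+1$ (so $j+1$ fits strictly between $i_{r-1}$ and $i_{r+1}$ in the new index list), while for all the other selected positions $t \ne j+1$ we have $(w\sigma_j)(i_t) = w(i_t)$; thus the selected subword of $w\sigma_j$ is exactly $w(i_1)\cdots w(i_{r-1}) w(j) w(i_{r+1}) \cdots w(i_k) = w(i_1)\cdots w(i_k)$, which is an occurrence of $p$. Hence $p \prec w\sigma_j$.

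The only remaining case is $\{j,j+1\}\subseteq P$, which is exactly the hypothesis we have excluded, so the two cases above are exhaustive and the position-swap claim follows. I would then derive the value-swap claim by the inverse symmetry described above — or, if one prefers a direct argument, by the entirely parallel case analysis: $\sigma_i w$ transposes the positions holding values $i$ and $i+1$, and as long as $\{i,i+1\}\not\subseteq V$ one either leaves the occurrence untouched (when neither $i$ nor $i+1$ is in $V$) or relocates a single occurrence value from its old position to the position formerly holding the other of $i,i+1$, preserving the relative order of the $k$ chosen values.

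I do not expect a serious obstacle here: the statement is essentially a bookkeeping lemma about how a fixed occurrence of $p$ is perturbed by a single transposition, and the hypothesis $\{i,i+1\}\not\subseteq V$ (resp. $\{j,j+1\}\not\subseteq P$) is precisely what guarantees that at most one of the two swapped slots belongs to the occurrence. The mild care needed is in the ``exactly one'' case, to check that after substituting the new position (resp.\ value) into the index list (resp.\ value list) the entries still appear in increasing position order and the selected subword is literally unchanged as a string, so that isomorphism to $p$ is immediate. That verification is the heart of the argument but is routine.
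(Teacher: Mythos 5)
Your main line of argument is correct: the detailed case analysis for the $w\sigma_j$ half (replace the index $j$ by $j+1$, or $j+1$ by $j$, and note the selected subword is literally the same string) is sound, and deducing the other half from it via $p \prec w \Leftrightarrow p^{-1}\prec w^{-1}$ and $(\sigma_i w)^{-1} = w^{-1}\sigma_i$, with $P$ and $V$ exchanging roles, is a valid (and slightly different) route. The paper instead proves both halves directly and symmetrically in two lines, with the key bookkeeping being \emph{which data is preserved}: under left multiplication by $\sigma_i$ the occurrence keeps its \emph{positions} $P$ (one value changes from $i$ to $i+1$ or vice versa, which cannot alter relative order since no value lies strictly between them), while under right multiplication by $\sigma_j$ it keeps its \emph{values} $V$.

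This is exactly where your optional ``entirely parallel'' direct argument for the $\sigma_i w$ half goes wrong: you propose to keep the value set $V$ and relocate the one occurrence value lying in $\{i,i+1\}$ to the position formerly holding the other, claiming the relative order of the chosen values is preserved. That is false in general, because another occurrence value may sit between the two swapped positions. For instance, take $w = 132$, $p = 12$, with the occurrence $w(1)w(2) = 13$, so $P=\{1,2\}$, $V=\{1,3\}$, and $i=1$ (so $\{1,2\}\not\subseteq V$). Then $\sigma_1 w = 231$, and the values $\{1,3\}$ now occupy positions $3$ and $2$ respectively, forming $31 \approx 21$, not $12$. The correct statement (the paper's) is that the \emph{positions} $P=\{1,2\}$ still carry a $p$-pattern in $\sigma_1 w$, namely $23$. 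Since your primary derivation of the $\sigma_i w$ half goes through the inverse trick rather than this sketch, the proof as a whole stands, but the parenthetical alternative should be corrected to preserve positions rather than values (dually to the $w\sigma_j$ case), as that asymmetry is the actual content of the lemma.
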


\begin{proof}
If $\{i, i+1\}\not\subseteq V$, then $\sigma_iw$, which transposes the positions of the values $i$ and $i+1$ in $w$, has a $p$-pattern occupying positions $P$. If $\{j, j+1\}\not\subseteq P$, then $w\sigma_j$, which transposes the values in positions $j$ and $j+1$ in $w$, has a $p$-pattern using values $V$.
\end{proof}

The following example demonstrates necessity of the hypotheses in Lemma~\ref{lem:extending a pattern}.

\begin{ex}
Consider $231 \prec 4231$, for which $P = \{2,3,4\}$ and $V = \{1,2,3\}$. The permutation
$$\sigma_3(4231) = 3241$$
has a $231$-pattern in occupying positions $P$, and
$$(4231)\sigma_1 = 2431$$
has a $231$-pattern using values $V$. On the other hand, the following permutations are all $231$-avoiding:
\begin{eqnarray*}
\sigma_1(4231) &=& 4132,\\
\sigma_2(4231) &=& 4321,\\
(4231)\sigma_2 &=& 4321,\\
(4231)\sigma_3 &=& 4213.
\end{eqnarray*}
\end{ex}

The influence of isolation in Lemma~\ref{lem:extending a pattern} will be important to the proof of Theorem~\ref{thm:patt redwd}. Indeed, that argument will be constructive and will involve manipulation of the permutation $w$ via the following mechanism.

\begin{cor}\label{cor:isolation and patterns}
Fix permutations $p$ and $w$ and suppose that elements of $R(p)$ appear as shifted isolated factors in elements of $R(w)$. Then $p \prec w$. Moreover, if $w$ has a $p$-pattern in positions $P$ and using values $V$, then for $\{i, i+1\}\not\subseteq V$ and $\{j, j+1\}\not\subseteq P$,
\begin{itemize}
\item elements of $R(p)$ appear as shifted isolated factors in elements of $R(\sigma_iw)$, and
\item elements of $R(p)$ appear as shifted isolated factors in elements of $R(w\sigma_j)$.
\end{itemize}
\end{cor}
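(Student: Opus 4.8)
The statement is a corollary of Lemma~\ref{lem:extending a pattern}, so the bulk of the work is checking that the \emph{isolation} of the factor is preserved when we multiply $w$ by an admissible adjacent transposition. The plan is as follows. Fix ${\sf s} = {\sf abc} \in R(w)$ in which a shifted reduced word for $p$ appears as the isolated factor ${\sf b}$, say shifted by $x$ and isolated on $[x+1,x+k-1]$, with ${\sf a} \in R(u)$ and ${\sf c} \in R(v)$. The containment $p \prec w$ is immediate: by Definition~\ref{defn:isolation} the factor ${\sf b}$ reads a reduced word for a shift of $p$, so $p$ occurs inside $w$, giving us positions $P$ and values $V$ as in the hypothesis. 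This handles the first sentence.

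For the two bulleted claims, first treat the left multiplication $\sigma_i w$ with $\{i,i+1\}\not\subseteq V$. I would argue that $\sigma_i$ commutes past the suffix: since ${\sf c} \in R(v)$ and $\ell(\sigma_i v) > \ell(v)$ for all $i \in [x+1,x+k-1]$ — wait, that bound is on the shift range, not on $i$, so I should be careful here. The correct move is to use the factorization $w = u \cdot (\text{shift of } p) \cdot v$ coming from ${\sf s}={\sf abc}$, observe that $\sigma_i w = (\sigma_i u')\,\cdots$ only when $i$ is small enough, and in general to instead fall back on the permutation-level statement of Lemma~\ref{lem:extending a pattern}: $\sigma_i w$ has a $p$-pattern occupying the \emph{same} positions $P$. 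Then I want a reduced word of $\sigma_i w$ in which the relevant factor is still isolated. The cleanest route is: take a reduced word ${\sf s}'={\sf a}'{\sf b}{\sf c}' \in R(\sigma_i w)$ obtained by prepending or adjusting a letter — concretely, either $\ell(\sigma_i w) = \ell(w)+1$, in which case $({\sf i}) \cdot {\sf s}$ need not be reduced, so instead I use that $\sigma_i u$ either lengthens or shortens $u$ and push the transposition through; or $\ell(\sigma_i w) = \ell(w)-1$. In both cases the factor ${\sf b}$ and its neighbourhood $[x+1,x+k-1]$ are untouched because the modification happens in the prefix, which governs $u$, and Definition~\ref{defn:isolation}(3) only constrains $u$ and $v$ through the inequalities $\ell(u\sigma_i)>\ell(u)$, $\ell(\sigma_i v)>\ell(v)$ for $i$ in the shift range. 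So I must verify that replacing $u$ by the permutation $u'$ with $\sigma_i u' = $ (new prefix permutation) still satisfies $\ell(u' \sigma_m) > \ell(u')$ for all $m \in [x+1,x+k-1]$, and symmetrically for $v$ (which is unchanged, so trivial). The right multiplication $w\sigma_j$ with $\{j,j+1\}\not\subseteq P$ is the mirror image: now the suffix permutation $v$ changes to some $v'$ with $v'\sigma_j = v$ (or $v\sigma_j$), the prefix $u$ is untouched, and I check $\ell(\sigma_m v') > \ell(v')$ for $m$ in the shift range.

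The main obstacle, then, is precisely this verification: showing that the isolation inequalities survive the modification of $u$ (resp.\ $v$). The key point will be that multiplying $u$ by $\sigma_i$ changes, for each $m$, the quantity $\ell(u\sigma_m) - \ell(u)$ — which is $+1$ or $-1$ according to whether positions $m,m+1$ of $u$ are in order — only when $\{i,i+1\}$ and $\{m,m+1\}$ overlap, i.e.\ $|i-m|\le 1$. So if the admissible $i$ (the one with $\{i,i+1\}\not\subseteq V$) happens to lie within distance $1$ of the shift range, I need a separate argument; I expect this is where the hypothesis $\{i,i+1\}\not\subseteq V$ does real work, ruling out exactly the bad configuration, much as in the proof of Lemma~\ref{lem:extending a pattern} itself. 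A clean way to organise this is to not modify ${\sf s}$ at all but rather to directly exhibit a reduced word of $\sigma_i w$: since $\sigma_i w$ has the $p$-pattern on positions $P$ with some value set $V'$, and the factor witnessing it can be taken at the same shift, run the construction of Lemma~\ref{lem:extending a pattern} and read off ${\sf a}', {\sf b}, {\sf c}$ from the resulting reduced decomposition. I would write the prefix/suffix bookkeeping once for the $\sigma_i w$ case and then say "the argument for $w\sigma_j$ is symmetric, exchanging the roles of positions and values."
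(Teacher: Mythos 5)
Your plan takes a different route from the paper (which disposes of this corollary with a one-line induction on $\ell(w)-\ell(p)$): you try to modify the chosen reduced word ${\sf s}={\sf abc}$ in one step, changing only the prefix for $\sigma_i w$ and only the suffix for $w\sigma_j$. A direct argument of this kind can be made to work, but as written there is a genuine gap: the one step that constitutes the content of the corollary --- that the isolation inequalities of Definition~\ref{defn:isolation}(3) survive the multiplication --- is deferred rather than carried out. You write that if the admissible $i$ lies within distance $1$ of the shift window ``I need a separate argument; I expect this is where the hypothesis $\{i,i+1\}\not\subseteq V$ does real work,'' and your fallback is to ``run the construction of Lemma~\ref{lem:extending a pattern} and read off ${\sf a}',{\sf b},{\sf c}$'' --- but Lemma~\ref{lem:extending a pattern} is a purely permutation-level statement and produces no reduced decomposition, so there is nothing to read off. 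The verification you skip is exactly where the hypothesis enters: with $V=u([x+1,x+k])$ the values of the factor-induced occurrence, $\{i,i+1\}\not\subseteq V$ forces $\sigma_i u$ to remain increasing on the window (two consecutive values can only flip the order of window positions if both lie in $V$), and symmetrically for $v$ under right multiplication. Without this computation the proof is not there. Relatedly, the opening claim that $p\prec w$ is ``immediate'' from the presence of the factor also needs the isolation: it is condition (3) that makes $u$ increasing on the window and the window values increasing in position in $v$, and without it the claim fails (e.g.\ ${\sf 2}$ is a non-isolated factor of ${\sf 121}\in R(321)$, yet $132\not\prec 321$).

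Moreover, the bookkeeping premise ``the modification happens in the prefix, which governs $u$'' is false in general, so your case split on $\ell(\sigma_i w)=\ell(w)\pm1$ does not go through as described. Take $p=21$, $w=2143$ with ${\sf 13}\in R(w)$, ${\sf a}$ empty, ${\sf b}={\sf 1}$ isolated, ${\sf c}={\sf 3}\in R(v)$ for $v=1243$, so $u=e$; the factor-induced occurrence has $P=V=\{1,2\}$, and $i=3$ is admissible. Here $\ell(\sigma_3 w)=\ell(w)-1$ even though $\ell(\sigma_3 u)=\ell(u)+1$, so $(\sigma_3 u)\,p'\,v$ is not a reduced factorization and no word ${\sf a}'{\sf b}{\sf c}$ with ${\sf a}'\in R(\sigma_3 u)$ and ${\sf c}\in R(v)$ is reduced; the witnessing decomposition for $\sigma_3 w=2134$ must change in the suffix as well ($v$ becomes the identity). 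The conclusion of the corollary still holds in this example, but not by the mechanism you propose, which is why some induction --- the paper's, on $\ell(w)-\ell(p)$, peeling letters off ${\sf a}$ and ${\sf c}$ one at a time, is the natural choice --- or a more careful global argument is needed to handle the case where the length changes of $w$ and of the prefix (resp.\ suffix) disagree.
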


\begin{proof}
This can be proved by induction on $\ell(w) - \ell(p)$.
\end{proof}

The next lemma gives the basis of the inductive argument in the proof of Theorem~\ref{thm:patt redwd}.

\begin{lem}\label{lem:consecutive}
Suppose there is an occurrence of $p$ in $w$ that either occupies consecutive positions or uses consecutive values. Then elements of $R(p)$ appear as shifted isolated factors in elements of $R(w)$.
\end{lem}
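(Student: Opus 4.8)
The plan is to handle the two cases symmetrically, using the fact that position-swaps and value-swaps correspond to right and left multiplication by adjacent transpositions. Consider first the case where the occurrence of $p \in \mf{S}_k$ uses consecutive values, say $V = \{m+1, m+2, \ldots, m+k\}$ for some $m \ge 0$, occupying positions $P = \{a_1 < a_2 < \cdots < a_k\}$. I would begin by reducing to a cleaner configuration: using left multiplications by $\sigma_i$ with $i \notin V$ (more precisely, $\{i,i+1\} \not\subseteq V$) and right multiplications by $\sigma_j$ with $\{j,j+1\} \not\subseteq P$, slide the values $m+1,\ldots,m+k$ down to occupy positions $1, \ldots, k$ and simultaneously verify via Lemma~\ref{lem:extending a pattern} that a $p$-pattern (now on consecutive values \emph{and} consecutive positions) persists. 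The point of this normalization is that once the pattern occupies positions $\{1,\ldots,k\}$ with values $\{m+1,\ldots,m+k\}$ and everything outside is ``out of the way,'' the permutation decomposes as a product $w = u' \cdot (\text{copy of }p\text{ shifted by }m) \cdot v'$ in a way that exhibits a reduced word of the desired shape.

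The heart of the argument is then the following: if $p'$ denotes the permutation of $\mf{S}_n$ that acts as $p$ (shifted by $m$) on $\{m+1,\ldots,m+k\}$ and fixes everything else, then any reduced word $\sf{t} \in R(p)$ shifted by $m$ is a reduced word of $p'$, and I claim $w$ can be written so that this shifted $\sf{t}$ appears as a factor $\sf{b}$ in some $\sf{s} \in R(w)$ with $\sf{s} = \sf{abc}$, $\sf{a} \in R(u)$, $\sf{c} \in R(v)$, where $u$ and $v$ are the ``prefix'' and ``suffix'' pieces obtained from the normalization step. The isolation condition — all letters of $\sf{b}$ lie in $[m+1, m+k-1]$ (automatic, since $\sf{t}$ uses letters in $[1,k-1]$), $m+k-1 < n$, and $\ell(u\sigma_i) > \ell(u)$, $\ell(\sigma_i v) > \ell(v)$ for all $i \in [m+1, m+k-1]$ — needs to be checked. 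The length conditions on $u$ and $v$ amount to saying that $u$ has no descent in positions $m+1,\ldots,m+k-1$ and $v$ has no value-descent there; this should follow from how $u$ and $v$ were constructed (they only move things that are genuinely outside the block), but pinning this down cleanly is where the real work lies. The case where $p$ occupies consecutive \emph{positions} is dual: one first uses right/left multiplications to slide the positions of the pattern to $\{1,\ldots,k\}$ and normalize, then runs the same factorization argument with the roles of left and right multiplication interchanged; alternatively, one can invoke the fact that $\sf{i_1 \cdots i_\ell} \in R(w) \iff \sf{i_\ell \cdots i_1} \in R(w^{-1})$ together with the observation that $p$ occupying consecutive positions in $w$ corresponds to $p^{-1}$ using consecutive values in $w^{-1}$, reducing the second case to the first.

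I expect the main obstacle to be the bookkeeping in the normalization step: showing that one can actually march the relevant values (resp.\ positions) into place using only the ``safe'' transpositions permitted by Lemma~\ref{lem:extending a pattern}, while keeping track of exactly what the leftover prefix and suffix permutations $u, v$ look like, so that the isolation inequalities $\ell(u\sigma_i) > \ell(u)$ and $\ell(\sigma_i v) > \ell(v)$ can be verified. In particular one must be careful that the sequence of swaps used to consolidate the block does not itself create a descent of $u$ or $v$ inside the forbidden range $[m+1,m+k-1]$; choosing the order of swaps appropriately (e.g.\ always moving the smallest misplaced value first, or processing positions left to right) should prevent this, but that is the step that requires genuine care rather than routine calculation.
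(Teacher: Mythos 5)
Your normalization step is where the argument breaks. You claim that after sliding the pattern values $m+1,\ldots,m+k$ into positions $1,\ldots,k$, the permutation decomposes as $w = u'\cdot(\text{shifted }p)\cdot v'$ ``in a way that exhibits a reduced word of the desired shape.'' Exhibiting a reduced word requires that factorization to be length-additive, $\ell(w) = \ell(u') + \ell(p) + \ell(v')$, and the sliding moves destroy this: pushing a non-pattern letter that is smaller than the pattern values rightward past the block creates inversions, so the normalized permutation can be strictly longer than $w$ and no reduced word of $w$ factors through it. Concretely, take $p = 21$ occurring on the consecutive values $\{3,4\}$ in $w = 1423$, with $\ell(w) = 2$: any way of bringing $\{3,4\}$ into positions $\{1,2\}$ must move the letter $1$ past both $3$ and $4$, yielding $4312$ of length $5$, and the pieces you extract cannot multiply to $w$ reducedly; no choice of swap order (``smallest misplaced value first,'' etc.) avoids this. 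If instead you intended only to prove the statement for the normalized permutation, then the conclusion has not been transferred back to $w$; that would require invoking something like Corollary~\ref{cor:isolation and patterns} for each undone move, which your write-up never does. Lemma~\ref{lem:extending a pattern} only guarantees the pattern survives the moves, not that any reduced-word structure does.

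The missing idea is that no normalization is needed: the hypothesis already yields a one-sided, length-additive factorization. If the occurrence uses the consecutive values $\{x+1,\ldots,x+k\}$, let $w'$ be obtained from $w$ by rewriting those $k$ values, in their given positions, in increasing order; since every other letter compares the same way with all of them, $\ell(w') = \ell(w) - \ell(p)$, and $w = p''w'$ where $p''$ is the shifted embedding of $p$ acting on the values $\{x+1,\ldots,x+k\}$. Hence ${\sf s'}{\sf t} \in R(w)$ for any ${\sf s} \in R(p)$ shifted by $x$ and any ${\sf t} \in R(w')$, and isolation is immediate because the prefix piece is the identity and $\ell(\sigma_i w') > \ell(w')$ for all $i \in [x+1,x+k-1]$, those values appearing in increasing order in $w'$. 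The consecutive-positions case is dual, with the shifted word as a suffix; your closing reduction via $w \mapsto w^{-1}$ (reversing words and exchanging $p$ with $p^{-1}$) is a legitimate way to pass between the two cases, but the case you worked out in detail is the one that does not go through as written.
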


\begin{proof}
Suppose that $p \in \mf{S}_k$ and $\langle p \rangle$ occupies positions $\{x+1,\ldots, x+k\}$ of $w$. Let $w'$ be the permutation obtained by writing the letters of $\langle p \rangle$ in increasing order and leaving all other letters fixed. Thus $\ell(w') = \ell(w) - \ell(p)$. Consider any ${\sf s} \in R(p)$ and ${\sf t} \in R(w')$, and let ${\sf s'}$ be the shift of ${\sf s}$ by $x$. Then the word ${\sf ts'}$ represents a product of adjacent transpositions that produces $w$. This word has length $\ell(w') + \ell(p) = \ell(w)$, so ${\sf ts'} \in R(w)$, as desired. If ${\sf s'}$ were not isolated in ${\sf ts'}$, then $w$ would not have a $p$-pattern in positions $\{x+1,\ldots, x+k\}$. Thus ${\sf s'}$ must be isolated.

The case of $\langle p \rangle$ using consecutive values is analogous, with the shifted reduced word of $p$ occurring as a factor on the left side in the reduced word of $w$.
\end{proof}

We are now ready for the main result of this section. Its proof is inductive, measured by how far the occurrences of $p$ are from satisfying the hypothesis of Lemma~\ref{lem:consecutive}. When $p^+ \llcurly w$, the proof constructs an element of $R(w)$ that contains an isolated shift of any $R(p)$ element as a factor.

\begin{thm}\label{thm:patt redwd}
Elements of $R(p)$ appear as shifted isolated factors in elements of $R(w)$ if and only if $p^+ \llcurly w$.
\end{thm}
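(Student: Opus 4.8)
The proof splits into the two implications, and I would set up both as inductions on $\ell(w)-\ell(p)$. For the \emph{easy direction} ($R(p)$ appears as shifted isolated factors $\implies p^+ \llcurly w$), first note that Corollary~\ref{cor:isolation and patterns} already gives $p \prec w$; what remains is to upgrade this to $q \prec w$ for every spread $q$ of $p$. Suppose $q$ is obtained by expanding a $2143$-pattern in $p$ into a $21354$-pattern of $q$. Since the $R(p)$-element $\sf s'$ is isolated on $[x+1,x+k-1]$, write $\sf s = ab c$ with $\sf a \in R(u)$, $\sf c \in R(v)$, and the isolation hypothesis gives room (in value on the left, in position on the right) to perform the insertion: I would exhibit an explicit sequence of position/value swaps on the $2143$-occurrence inside $u\langle p\rangle v$ that inserts the extra letter, using condition~(3) of Definition~\ref{defn:isolation} to guarantee each swap strictly increases length (so no braid collisions occur and the resulting word is still reduced) and condition~(1) ($m'<n$) to guarantee there is a position $x+k$ available for the new letter of the $21354$. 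The point is that isolation is exactly the ``no interference from prefix or suffix'' condition that lets the $2143 \to 21354$ spreading happen inside $w$.

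\textbf{Hard direction.} For ($p^+ \llcurly w \implies$ $R(p)$ appears as shifted isolated factors), I would induct on how far a chosen occurrence $\langle p\rangle$ in $w$ is from occupying consecutive positions or using consecutive values; the base case is precisely Lemma~\ref{lem:consecutive}. For the inductive step, pick an occurrence of $p$ in positions $P$ using values $V$, neither consecutive. Then there is a ``gap'': some value $v_0 \notin V$ with $\min V < v_0 < \max V$, or some position $p_0 \notin P$ with $\min P < p_0 < \max P$. Say it is a value gap; I want to apply $\sigma_{v_0}$ or $\sigma_{v_0-1}$ (left multiplication) to pull $v_0$ out of the interior of $V$, decreasing the ``spread'' of the occurrence, then invoke the inductive hypothesis on the shorter-spread pattern in $\sigma_\bullet w$, and finally transfer the conclusion back to $w$ via the ``Moreover'' clause of Corollary~\ref{cor:isolation and patterns} (which is what lets isolated-factor-ness survive an allowed $\sigma_i$ or $\sigma_j$ multiplication, in the reverse direction). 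The subtlety is that $\sigma_\bullet w$ might be \emph{longer} than $w$; to keep the induction well-founded I measure progress by the spread statistic, not by length, and I must check that $\sigma_\bullet w$ still satisfies $p^+ \llcurly \sigma_\bullet w$ so the inductive hypothesis applies — this is where the spreads $q \in p^+$ are needed: collapsing a value gap could in principle destroy a $q$-pattern, and one shows it cannot, because any $q$-occurrence in $w$ survives (using Lemma~\ref{lem:extending a pattern} in the appropriate direction, choosing the swap to avoid the values/positions of that $q$-occurrence).

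\textbf{Main obstacle.} The crux — and the place where $2143$-avoidance versus the general barred-pattern condition really enters — is the simultaneous bookkeeping in the inductive step of the hard direction: one must choose the swap $\sigma_i$ (or $\sigma_j$) so that it (i) genuinely reduces the spread of the targeted $p$-occurrence, (ii) preserves $q \prec \cdot$ for \emph{every} spread $q$, not just the one $q$ that ``came from'' the current gap, and (iii) keeps the induction grounded. The reason $p^+ \llcurly w$ is exactly the right hypothesis is that a spread $q$ records precisely the obstruction to collapsing a gap between the ``$1$'' and the ``$4$'' of a $2143$ in the pattern; if $w$ contains that spread, the collapse is legal without losing any pattern. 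I would handle this by a careful case analysis on whether the gap to be collapsed sits inside, adjacent to, or disjoint from each $2143$-pattern of $p$, showing in every case that the needed $q$-occurrence (guaranteed by $p^+\llcurly w$) is preserved by the chosen swap via Lemma~\ref{lem:extending a pattern}. Once that is in place, the construction is completely explicit and the isolated $R(p)$-factor is produced by threading Lemma~\ref{lem:consecutive} through the chain of swaps.
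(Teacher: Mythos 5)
There is a genuine gap, and it starts with a misreading of what $p^+ \llcurly w$ asserts. A spread is a \emph{barred} pattern $\overline{q}$, and $\overline{q} \prec w$ means that $w$ has an occurrence of the undecorated part $p$ that is \emph{not} part of a larger $q$-pattern. So in the forward direction you must show that the occurrence of $p$ carved out by the isolated factor cannot be extended to $21354$ at any $2143$ of $p$; the paper does this by noting that the prefix ${\sf a}$ and suffix ${\sf c}$ act by length-increasing value and position swaps that never transpose two pattern entries, so no new value can be pushed into the middle of a $2143$ of that occurrence. Your plan does the opposite: you propose to use isolation to \emph{insert} the extra letter and realize the $21354$-extension inside $w$, i.e., to prove $q \prec w$ for the unbarred spread. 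That statement is simply false (take $w = p = 2143$: the reduced words of $p$ appear trivially as isolated factors of themselves, yet $21354 \not\prec 2143$), so this half of your argument is not a proof of the theorem and cannot be repaired as written.

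The same misreading distorts the converse. Your skeleton — induction on a gap statistic, base case Lemma~\ref{lem:consecutive}, collapsing gaps by adjacent transpositions, transferring isolation back through Corollary~\ref{cor:isolation and patterns} — does match the paper's, but the hypothesis $p^+ \llcurly w$ is not used there to ``preserve $q$-occurrences'' via Lemma~\ref{lem:extending a pattern}; that lemma concerns ordinary patterns and says nothing about barred ones, and preserving barred containment under a swap is a different (and harder) question. In the paper the hypothesis enters only once, to rule out the unique configuration in which no legal collapse exists: a minimal position gap $x$ whose smaller pattern entries to the left and larger pattern entries to the right each contain a descent, which is exactly an occurrence of $p$ spread into $21354$ and hence contradicts $p^+ \llcurly w$. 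In every other configuration the collapse is carried out by explicit, length-\emph{decreasing} right or left multiplications (the loop $(\dagger)$), so your concern that $\sigma_\bullet w$ could be longer than $w$ does not arise; meanwhile the real content of the inductive step — the case analysis around the minimal position gap, the revaluing of the occurrence during $(\dagger)$, and the verification that no single move transposes two pattern entries so that Corollary~\ref{cor:isolation and patterns} applies and $\gap$ eventually drops — is missing from your sketch.
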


\begin{proof}
Fix $p \in \mf{S}_k$.

First suppose that elements of $R(p)$ appear as shifted isolated factors in elements of $R(w)$. Then, by Corollary~\ref{cor:isolation and patterns}, we have $p \prec w$. Let ${\sf abc} \in R(w)$ be such a reduced word, where ${\sf b}$ is a shifted element of $R(p)$ that is isolated in ${\sf abc}$. Then ${\sf b} \in R(p')$, where
$$
p'(i) = 
\begin{cases}
p(i-x) + x & \text{if } i \in [x+1,x+k] \text{ and}\\
i & \text{otherwise,}
\end{cases}$$
for some $x$. The letters of ${\sf a}$ act on $p'$ by lengthening the permutation via transpositions of consecutive values. Because ${\sf b}$ is isolated and ${\sf a}$ is reduced, this procedure never transposes two values that are both in the pattern occurrence. Similarly, the reduced word ${\sf c}$ acts on that resulting permutation by lengthening it via transpositions of consecutive positions, never transposing two positions that are both in the pattern occurrence. The conclusion of these actions yields the permutation $w$. Consider any $2143$-pattern in $p$. The ``lengthening'' requirement of both ${\sf a}$ and ${\sf c}$ makes it impossible to move some $y \in (\langle 2 \rangle, \langle 3 \rangle)$ into the middle of a $2143$-pattern in $p$ unless it is swapping positions with some other $y' \in (\langle 2 \rangle,\langle 3 \rangle)$. Thus the only such values in $w$ must have been in $p$ itself, and so $p^+ \llcurly w$.

For the remainder of the proof, suppose that $p^+ \llcurly w$. We will induct on a statistic $\gap$ defined as follows. For a given $\langle p \rangle$ in $w$, let
$$\gapp(\langle p \rangle,w) = \Big(w^{-1}\big(\langle p(k) \rangle \big) - w^{-1}\big(\langle p(1) \rangle \big)\Big) - (k-1)$$
measure any excessive positional span of $\langle p \rangle$ in $w$, and
$$\gapv(\langle p \rangle,w) = \big(\langle k \rangle - \langle 1 \rangle\big) - (k-1).$$
measure any excessive value span. Both are nonnegative, and $\gapp(\langle p \rangle,w) = 0$ if and only if $\langle p \rangle$ appears in consecutive positions of $w$, while $\gapv(\langle p \rangle,w) = 0$ if and only if $\langle p \rangle$ uses consecutive values in $w$. Now set
$$\gap(p,w) = \min_{\langle p \rangle \text{ in } w} \Big\{ \gapp(\langle p \rangle,w) + \gapv(\langle p \rangle,w)\Big\}.$$

If $\gap(p,w) = 0$, then some $\langle p \rangle$ appears in consecutive positions in $w$ and with consecutive values. By Lemma~\ref{lem:consecutive}, then, the result holds.

Assume, inductively, that the result holds for all $q^+ \llcurly v$ with $\gap(q,v) < \gap(p,w)$. Fix an occurrence $\langle p \rangle$ in $w$ for which $\gap(p,w) = \gapp(\langle p \rangle,w) + \gapv(\langle p \rangle,w)$.

Call each $\langle p(i) \rangle$ a \emph{pattern entry}. Call $x \not \in \langle p \rangle$ a \emph{position gap} if it appears between $\langle p(1) \rangle$ and $\langle p(k) \rangle$ in the one-line notation of $w$, and a \emph{value gap} if $\langle 1 \rangle < x < \langle k\rangle$. If either $\gapp(\langle p \rangle,w)$ or $\gapv(\langle p \rangle,w)$ is $0$ then we can apply Lemma~\ref{lem:consecutive} and be done, so assume that both are positive. Fix $x$ to be the minimal position gap.

If $x$ is less than all pattern entries appearing to its left, then multiply $w$ on the right by adjacent transpositions to produce $w'$ in which $x$ has been shifted to the left of the $p$-pattern, and the rest of the one-line notation is unchanged. This $w'$ contains an occurrence of $p$ having the same values as $\langle p \rangle$ in $w$, but its positional span is one less than that of $\langle p \rangle$ in $w$. Thus $\gap(p,w') < \gap(p,w)$, and, because we never transposed two pattern entries in a single move, the result follows from the inductive hypothesis and Corollary~\ref{cor:isolation and patterns}.

If $x$ is greater than all pattern entries appearing to its right, then in fact all position gaps are greater than those pattern entries. Multiply $w$ on the right by adjacent transpositions to produce $w'$ in which the rightmost position gap has been shifted to the right of the $p$-pattern. Again, then, $\gap(p,w') < \gap(p,w)$, and the result follows from the inductive hypothesis and Corollary~\ref{cor:isolation and patterns}.

It remains to address when the following two situations occur simultaneously: a pattern entry less than $x$ appears to the left of $x$ in $w$, and a pattern entry greater than $x$ appears to the right of $x$ in $w$. Note that this implies existence of some $m$ for which $\langle m \rangle < x < \langle m+1 \rangle$.

Suppose that the pattern entries that are both less than $x$ and to its left appear in increasing order in $w$. The following procedure, which we call ($\dagger$), acts on a particular $p$-pattern and position gap in a permutation, and loops as necessary. Note that it never transposes two pattern entries in a single move.
\begin{itemize}
\item If $y = \langle m \rangle$ is to the left of $x$ in $w$ (necessarily the rightmost smaller pattern entry appearing to the left of $x$), then multiply $w$ on the right by adjacent transpositions to produce $w'$ in which $x$ has been shifted leftward until it abuts $y$. Each multiplication removes an inversion, shortening the permutation at each step. Moreover, the values of $\langle p \rangle$ still form a $p$-pattern in $w'$, as do the values of $\langle p \rangle$ with the exception of now using $x$ instead of $y$. If $y$ had been the leftmost pattern entry in $w$, then the revalued $p$-pattern has fewer position gaps than $\langle p \rangle$ had in $w$, so the result follows from the inductive hypothesis and Corollary~\ref{cor:isolation and patterns}. Otherwise, iterate ($\dagger$) using this new $p$-pattern in $w'$ and the position gap $y < x$.
\item If $y = \langle m \rangle$ is to the right of $x$ in $w$, then multiply $w$ on the left by elements of $\{\sigma_i : y \le i < x\}$ to produce $w'$ in which $\{y, y + 1, \ldots, x\}$ appear in increasing order and no other letters have moved. By definition of $x$ and $m$, this $w'$ contains a $p$-pattern in the same positions as $\langle p \rangle$ in $w$, and the only value that differs is some $y' > y$ now acting as ``$m$'' in the pattern. If $\gap(p,w') < \gap(p,w)$, then the result follows from the inductive hypothesis and Corollary~\ref{cor:isolation and patterns}. Otherwise, because the number of position gaps and value gaps have not changed with this revalued $p$-pattern, the measure $\gap(p,w')$ is obtained on it. Now iterate ($\dagger$) using this new $p$-pattern in $w'$ and the position gap $y < x$, which necessarily appears no further to the right in $w'$ than $x$ had appeared in $w$.
\end{itemize}

When all pattern entries that are both greater than $x$ and to its right form an increasing sequence in $w$, the argument is similar to ($\dagger$), with one additional step. Let $x'$ be the maximal position gap that does not appear to the left of $x$. Because $x' \ge x$, we in fact have that all pattern entries that are both greater than $x'$ and to its right appear in increasing order in $w$. Apply a procedure to $x'$ and its rightward larger pattern entries, analogous to the previous argument for $x$ and its leftward smaller pattern entries.

We have now addressed all scenarios except one: when neither $x$'s leftward smaller pattern entries nor its rightward larger pattern entries forms an increasing sequence; that is,
$$w = \cdots \langle p(a) \rangle \cdots \langle p(b) \rangle \cdots x \cdots \langle p(c) \rangle \cdots \langle p(d)\rangle \cdots,$$
for some
$$\langle p(b) \rangle < \langle p(a) \rangle < x < \langle p(d) \rangle < \langle p(c)\rangle.$$
But this means exactly that $p^+ \not\llcurly w$, which is a contradiction.
\end{proof}

There are two things in particular to observe about Theorem~\ref{thm:patt redwd}. First, if $p^+\llcurly w$ and ${\sf abc} \in R(w)$ with ${\sf b}$ a shifted isolated reduced word of $p$, then in fact we can replace ${\sf b}$ by that same shift of \emph{any} reduced word of $p$. Second, the proof of Theorem~\ref{thm:patt redwd} is constructive; that is, if $p^+ \llcurly w$ then the proof produces reduced words of $w$ that contain shifted reduced words of $p$ as isolated factors.

\section{Combinatorial influence of patterns on reduced words}\label{sec:comb apps}

One immediately corollary to Theorem~\ref{thm:patt redwd} recovers the main result of \cite{tenner rdpp}.

\begin{cor}[{\cite[Theorem 3.8]{tenner rdpp}}]\label{cor:rdpp main}
A permutation $p$ is vexillary if and only if for every $w \succ p$, elements of $R(p)$ appear as shifted isolated factors in elements of $R(w)$.
\end{cor}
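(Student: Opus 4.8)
The plan is to derive Corollary~\ref{cor:rdpp main} from Theorem~\ref{thm:patt redwd} by showing that the condition ``$p^+ \llcurly w$ for every $w \succ p$'' is equivalent to $p$ being vexillary, i.e., to $p$ avoiding $2143$. So the corollary reduces to the purely pattern-theoretic statement:
\begin{equation*}
p \text{ avoids } 2143 \quad \Longleftrightarrow \quad \big(\forall w \succ p,\ p^+ \llcurly w\big).
\end{equation*}
Once this equivalence is in hand, Theorem~\ref{thm:patt redwd} applied to each $w \succ p$ immediately gives the corollary: the left-hand side of Theorem~\ref{thm:patt redwd} holds for all $w \succ p$ precisely when $p^+ \llcurly w$ for all such $w$.

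For the forward direction, suppose $p$ avoids $2143$. Then by Definition~\ref{defn:spread} we are in the degenerate case, so $p^+ = \{p\}$. Hence for any $w \succ p$ we have $p \prec w$, which is exactly the statement $p^+ \llcurly w$ (a singleton set of patterns, all of which are contained in $w$). This direction is essentially immediate from the definitions.

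For the reverse direction, I would argue the contrapositive: if $p$ contains a $2143$-pattern, I must produce some $w \succ p$ with $p^+ \not\llcurly w$. The natural candidate is to take the occurrence of $2143$ inside $p$ and ``spread'' it in the ambient permutation exactly as described in the paragraph preceding Definition~\ref{defn:spread}: insert a new letter between the ``$1$'' and the ``$4$'' of that $2143$-occurrence, turning it into a $21354$-pattern, and leave the rest of $p$ otherwise intact (suitably relabeling to stay in some $\mf{S}_n$). Call the result $w$; then $\ell(w) = \ell(p)+1$ and $p \prec w$. The point is that this single inserted letter is precisely the obstruction: for the spread $\overline{q} \in p^+$ corresponding to this particular $2143$-occurrence of $p$, the barred-pattern condition fails in $w$, because every occurrence of the undecorated portion $p$ in $w$ that uses this $2143$-site is now part of the larger $q = 21354$-pattern created by the inserted letter. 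Verifying that no \emph{other} occurrence of $p$ in $w$ rescues the spread — that is, that $\overline{q} \not\prec w$ genuinely — is the one place requiring a little care, and I expect it to be the main obstacle: one needs to check that the inserted letter cannot be ``avoided'' by choosing a different embedding of $p$ into $w$. Since $w$ was built from $p$ by a single insertion, any occurrence of $p$ in $w$ either uses the new letter or does not; if it does not, it sits inside the copy of $p$ and still contains the original $2143$, which together with the new letter forms a $21354$; and if it does use the new letter, one argues directly that the relevant entries still extend. This is a finite, local case analysis on five or six entries.

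I should also record the small bookkeeping point that $p^+ \llcurly w$ means \emph{every} spread of $p$ is contained in $w$, so to violate it, it suffices to exhibit \emph{one} spread not contained in $w$ — which is what the construction above does. With the equivalence established, the corollary follows by quoting Theorem~\ref{thm:patt redwd}, and I would close by noting that this recovers \cite[Theorem 3.8]{tenner rdpp} verbatim, since vexillarity is by definition $2143$-avoidance.
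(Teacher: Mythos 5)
Your overall framing is the same as the paper's: reduce the corollary to the statement that $2143$-avoidance of $p$ is equivalent to having $p^+ \llcurly w$ for every $w \succ p$, and then invoke Theorem~\ref{thm:patt redwd}. Your forward direction (vexillary gives $p^+ = \{p\}$, so $p^+ \llcurly w$ is just $p \prec w$) is precisely the fact the paper combines with the theorem.

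The reverse direction, however, has a genuine gap, located exactly at the step you flag as ``a finite, local case analysis'': the claim that for $w = q$ (a single insertion turning a chosen $2143$ of $p$ into a $21354$) every occurrence of $p$ in $w$ that uses the inserted letter still extends, so that $\overline{q} \not\prec w$. This is false in general, and the paper's own example following Definition~\ref{defn:spread} refutes the construction. Take $p = 251364$: its unique $2143$-occurrence $2164$ already sits inside a $21354$-pattern of $p$ itself (on the values $2,1,3,6,4$). Inserting a new value just above $2$, just before the $3$, gives $w = q = 2613475$, whose corresponding spread is $261\overline{3}475$. Besides the occurrence of $p$ obtained by deleting the inserted letter, $w$ has a second occurrence of $p$, on positions $1,2,3,4,6,7$ (values $2,6,1,3,7,5$), and it does use the inserted letter. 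This occurrence does \emph{not} extend to a $q$-pattern, because an extension would require a letter of $w$ strictly between positions $3$ and $4$, and there is none; hence it witnesses $261\overline{3}475 \prec w$, i.e.\ the spread \emph{is} contained in $w$ --- the opposite of what you need. A similar check shows all four spreads of $251364$ are contained in $2613475$, and likewise in the other insertion $2614375$, so $p^+ \llcurly w$ for both candidates and, by Theorem~\ref{thm:patt redwd}, neither one is a counterexample permutation at all; since $251364$ has no other $2143$-occurrence, no choice of site rescues the construction. So when the chosen $2143$ can already be spread by a letter of $p$ itself, a single insertion never violates $p^+ \llcurly w$, and the converse requires a genuinely different witness --- this is the nontrivial content of \cite[Theorem 3.8]{tenner rdpp}, which the paper's one-line proof quotes rather than re-derives. (A minor slip, harmless to the argument: $\ell(w) = \ell(p)+1$ need not hold for your insertion; for $p = 2143$ and $w = 21354$ the lengths are equal.)
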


\begin{proof}
Combine Theorem~\ref{thm:patt redwd} with the fact that $p$ is vexillary if and only if $p^+ = \{p\}$.
\end{proof}

Theorem~\ref{thm:patt redwd} also has applications for other objects, defined on a particular partition of the reduced words of a permutation.

\begin{defn}
Fix a permutation $w$ and define a relation $\sim$ on the set $R(w)$ such that ${\sf s} \sim {\sf t}$ if and only if ${\sf s}$ and ${\sf t}$ differ by a sequence of commutations. This $\sim$ is an equivalence relation, and the classes it defines are the \emph{commutation classes} of $w$, denoted $C(w)$.
\end{defn}

\begin{reppex}{ex:3241}
Observe that ${\sf 1213} \sim {\sf 1231}$. Thus the commutation classes of $3241 \in \mf{S}_4$ are $\{{\sf 2123}\}$ and $\{{\sf 1213}, {\sf 1231}\}$, and $|C(3241)| = 2$.
\end{reppex}

The commutation classes of a permutation do not take braids into account, but we can interpret the influence of those moves by means of a graph defined on commutation classes.

\begin{defn}
The \emph{graph of commutation classes} (or, simply, ``graph'' for our purposes) of a permutation $w$ has vertex set $C(w)$, and an edge between classes when representatives of those classes differ by a braid move.
\end{defn}

\begin{ex}
Because the reduced words ${\sf 2123}$ and ${\sf 1213}$ differ by a braid move, we have
$$\begin{minipage}{.75in}
$G(3241) =$
\end{minipage}
\begin{minipage}{1in}
\begin{tikzpicture}
\foreach \x in {(0,0),(0,1)} {\fill[black] \x circle (2pt);}
\draw (0,0) -- (0,1);
\draw (0,1) node[above] {$\{{\sf 2123}\}$};
\draw (0,0) node[below] {$\{{\sf 1213}, {\sf 1231}\}$};
\end{tikzpicture}
\end{minipage}.$$
\end{ex}

The sum of the letters in a reduced word is constant within a commutation class, and braid moves change the parity of this sum. Therefore the graph $G(w)$ is bipartite. It is also connected (see, for example, \cite{elnitsky}).

In fact, we can say more about $G(w)$ in light of Theorem~\ref{thm:patt redwd}.

\begin{cor}
If $p^+ \llcurly w$, then $G(p)$ is a subgraph of $G(w)$.
\end{cor}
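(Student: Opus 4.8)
The plan is to exhibit an explicit graph embedding of $G(p)$ into $G(w)$ that respects both the vertex structure (commutation classes) and the edge structure (braid moves). By Theorem~\ref{thm:patt redwd}, since $p^+ \llcurly w$, we may fix a reduced word ${\sf abc} \in R(w)$ in which ${\sf b}$ is a shifted isolated copy of some reduced word of $p$; by the first observation following Theorem~\ref{thm:patt redwd}, the strings ${\sf a}$ and ${\sf c}$ may be held fixed while ${\sf b}$ is replaced by the same shift of \emph{any} reduced word of $p$. Write ${\sf b}^{+x}$ for the shift by $x$ of a word ${\sf b} \in R(p)$. First I would define a map $\Phi$ on reduced words of $p$ by ${\sf b} \mapsto {\sf a}\,{\sf b}^{+x}\,{\sf c}$, and check (which the theorem and its isolation condition already give) that this lands in $R(w)$.

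Next I would show that $\Phi$ descends to commutation classes, i.e., induces a well-defined map $C(p) \to C(w)$. The key point is that a commutation move inside ${\sf b}$ — swapping ${\sf i}{\sf j}$ with ${\sf j}{\sf i}$ where $|i-j|>1$ — is transported by the shift to a commutation move on ${\sf i}^{+x}{\sf j}^{+x}$, since shifting preserves absolute differences; this move takes place entirely within the ${\sf b}$-factor and hence is a legal commutation in ${\sf a}\,{\sf b}^{+x}\,{\sf c}$. So commutation-equivalent words of $p$ map to commutation-equivalent words of $w$, giving a map $\overline{\Phi}: C(p) \to C(w)$. To see $\overline{\Phi}$ is injective on vertices, observe that two words ${\sf b}, {\sf b}'$ with ${\sf a}\,{\sf b}^{+x}\,{\sf c} \sim {\sf a}\,{\sf b}'^{+x}\,{\sf c}$ must in fact have ${\sf b}^{+x} \sim {\sf b}'^{+x}$: the isolation hypothesis (item~(3) of Definition~\ref{defn:isolation}) forces every reduced word in the commutation class of ${\sf a}\,{\sf b}^{+x}\,{\sf c}$ to keep all the letters from $[x+1,x+k-1]$ bunched together in a single factor isomorphic to a reduced word of $p$, because no letter of ${\sf b}^{+x}$ can commute past the last letter of ${\sf a}$ or the first letter of ${\sf c}$. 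Hence the middle factor is a commutation-class invariant, and unshifting recovers ${\sf b} \sim {\sf b}'$.

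Finally I would check that $\overline{\Phi}$ sends edges to edges: if two classes of $p$ are joined by a braid, pick representatives ${\sf b}, {\sf b}'$ differing by a single braid move ${\sf i}(i{\pm}1){\sf i} \leftrightarrow ({\sf i}{\pm}1){\sf i}({\sf i}{\pm}1)$; shifting by $x$ turns this into a braid move on consecutive letters ${\sf i}^{+x}, {\sf (i\pm1)}^{+x}$, again occurring strictly inside the ${\sf b}$-factor, so ${\sf a}\,{\sf b}^{+x}\,{\sf c}$ and ${\sf a}\,{\sf b}'^{+x}\,{\sf c}$ differ by a braid and their (distinct, by injectivity) classes are adjacent in $G(w)$. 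Together with injectivity on vertices this shows $\overline{\Phi}$ is a graph embedding, so $G(p)$ is (isomorphic to) a subgraph of $G(w)$.

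I expect the main obstacle to be the injectivity step — specifically, arguing cleanly that isolation makes the middle factor a commutation-class invariant of ${\sf a}\,{\sf b}^{+x}\,{\sf c}$. One must rule out the possibility that some sequence of commutations in $R(w)$ shuffles letters of ${\sf b}^{+x}$ out past the boundaries with ${\sf a}$ or ${\sf c}$, or interleaves boundary letters into the middle; the length hypotheses $\ell(u\sigma_i) > \ell(u)$ and $\ell(\sigma_i v) > \ell(v)$ for $i \in [x+1, x+k-1]$ are exactly what prevents this, but spelling out why (e.g.\ that the first letter of ${\sf c}$, if in range, would create a descent, contradicting reducedness after any commutation that brought it adjacent) requires a careful local analysis rather than a one-line appeal. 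Everything else — shifting preserving the two Coxeter relations, and moves staying inside the factor — is routine.
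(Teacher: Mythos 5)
Your overall route is the one the paper intends (the paper states this corollary with no proof, treating it as immediate from Theorem~\ref{thm:patt redwd} together with the remark that, with ${\sf a}$ and ${\sf c}$ held fixed, the factor ${\sf b}$ may be replaced by the same shift of \emph{any} reduced word of $p$): map the class of ${\sf b}$ to the class of ${\sf a}\,{\sf b}^{+x}\,{\sf c}$, check well-definedness, injectivity, and edge-preservation. Well-definedness and edge-preservation are fine as you argue (and distinctness of the two endpoint classes of an edge is even automatic, since a braid changes the letter-sum, which is constant on commutation classes). The genuine gap is in your justification of injectivity. The claim that isolation forces every word in the commutation class of ${\sf a}\,{\sf b}^{+x}\,{\sf c}$ to keep the letters of ${\sf b}^{+x}$ bunched in a contiguous factor is false: isolation only says that no letter of $[x+1,x+k-1]$ can end a reduced word of $u$ or begin a reduced word of $v$; it does not prevent letters of ${\sf b}^{+x}$ from commuting past letters of ${\sf a}$ or ${\sf c}$ whose values are far from that interval (and ${\sf a}$, ${\sf c}$ may themselves contain letters inside $[x+1,x+k-1]$). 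For instance, with ${\sf a}={\sf 1}$ and ${\sf b}^{+x}={\sf 343}$, one has ${\sf 1343}\sim{\sf 3413}$, where the ${\sf b}$-letters are no longer contiguous. So ``the middle factor is a commutation-class invariant'' fails as stated, and the parenthetical mechanism you sketch (a descent contradicting reducedness) does not repair it, since commutations preserve reducedness.

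Injectivity is nevertheless true, and can be proved by a different, cleaner invariant: two words are related by commutations if and only if, for every pair of letters $i,j$ with $|i-j|\le 1$, the subwords obtained by deleting all other letters coincide (the projection/heap characterization of commutation classes). If ${\sf a}\,{\sf b}^{+x}\,{\sf c}\sim{\sf a}\,{\sf b}'^{+x}\,{\sf c}$, then the two words have the same letter multiset, so ${\sf b}^{+x}$ and ${\sf b}'^{+x}$ do as well; for each pair $\{i,j\}$ with $|i-j|\le 1$, the projection of ${\sf a}\,{\sf b}^{+x}\,{\sf c}$ is the concatenation of the projections of ${\sf a}$, ${\sf b}^{+x}$, ${\sf c}$, and cancelling the common prefix (from ${\sf a}$) and suffix (from ${\sf c}$) of equal lengths forces the projections of ${\sf b}^{+x}$ and ${\sf b}'^{+x}$ to agree. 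Hence ${\sf b}^{+x}\sim{\sf b}'^{+x}$, i.e., ${\sf b}\sim{\sf b}'$. Note that this argument does not use isolation at all; isolation is what you need (via Theorem~\ref{thm:patt redwd} and the remark following it) to know that ${\sf a}\,{\sf b}'^{+x}\,{\sf c}\in R(w)$ for every ${\sf b}'\in R(p)$, not to force the factor to stay intact under commutations. With the injectivity step replaced by this argument, your proof is correct.
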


Despite the natural definition of the classes $C(w)$, they are not especially well understood. For example, we cannot yet enumerate $|C(n(n-1)\cdots21)|$. In \cite{elnitsky}, Elnitsky defined a polygon $X(w)$ whose rhombic tilings are in bijection with elements of $C(w)$. This gives another perspective for working with commutation classes, and can be quite fruitful. Indeed, we used Elnitsky's polygon in \cite{tenner rdpp} to show that pattern containment affects the number of these classes.

\begin{prop}[{\cite[Theorem 5.10]{tenner rdpp}}]
If $p \prec w$ then $|C(p)| \le |C(w)|$.
\end{prop}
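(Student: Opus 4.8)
The plan is to pass to Elnitsky's polygon and count tilings. By \cite{elnitsky} the rhombic tilings of $X(w)$ are in bijection with $C(w)$, so it suffices to exhibit a surjection from the rhombic tilings of $X(w)$ onto those of $X(p)$.

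First I would reduce to a single step. Fix an occurrence of $p$ in $w$ and delete the non-occurrence entries of $w$ one at a time (standardizing after each deletion); this produces a chain $p = w_0 \prec w_1 \prec \cdots \prec w_r = w$ in which every $w_t$ still contains $p$ and each $w_{t+1}$ yields $w_t$ by deleting a single entry. Since surjections compose, it is enough to handle $w' \prec w$ where $w'$ is obtained from $w$ by deleting one entry, say the entry of value $c$.

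Next I would set up the restriction map. In the wiring-diagram picture underlying $X(w)$ there are $n$ pseudolines (``wires''), two of which cross exactly once precisely when they form an inversion of $w$; in any rhombic tiling the wire of value $c$ threads a ribbon of rhombi. Deleting that ribbon --- remove every rhombus in which the $c$-wire participates and glue the two sides of the ribbon together --- turns a rhombic tiling of $X(w)$ into a rhombic tiling of the polygon obtained by suppressing that wire, which (up to the harmless standardization of deleting value $c$) is exactly $X(w')$. This defines a map $\Phi$ on tilings, hence a map $C(w)\to C(w')$.

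The crux is to show $\Phi$ is surjective. Given a rhombic tiling $T'$ of $X(w')$, I would re-insert the $c$-wire: it must run from its prescribed edge on the left boundary of $X(w)$ to its prescribed edge on the right boundary, crossing exactly the wires $a$ with $\{a,c\}$ an inversion of $w$. One argues that $T'$ always admits at least one monotone lattice path realizing this ribbon --- for instance by routing the new wire as far to one side as the boundary data of $X(w)$ permits --- and that thickening such a path into rhombi gives a bona fide rhombic tiling $T$ of $X(w)$ with $\Phi(T)=T'$. This is the combinatorics of inserting one zone into a rhombic (zonotopal) tiling, and the existence of compatible endpoints is precisely the statement that $w'$ is an honest pattern of $w$. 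I expect this zone-insertion step to be the main obstacle; all the counting is immediate once it is in hand, since $|C(w)| \ge |C(w')|$ follows from surjectivity of $\Phi$ and then $|C(p)| \le |C(w)|$ follows from the chain above. (When one has the stronger hypothesis $p^+ \llcurly w$ this step is unnecessary: Theorem~\ref{thm:patt redwd} already produces reduced words of $w$ containing shifted reduced words of $p$ as isolated factors, giving an injection $C(p)\hookrightarrow C(w)$ that splits $\Phi$; but for arbitrary $p \prec w$ the polygon argument seems to be the natural route.)
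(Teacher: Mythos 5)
Your overall strategy (reduce to deleting one entry at a time, then compare tilings of $X(w)$ and $X(w')$ via Elnitsky's bijection) is reasonable, but the proof as written has a genuine gap exactly where you flag it: the surjectivity of the zone-deletion map $\Phi$ is not a routine verification, it is the entire content of the argument, and you only assert it. The difficulty is real. Deleting the $c$-zone is fine, but to lift an arbitrary tiling $T'$ of $X(w')$ you must route a new monotone wire that crosses \emph{exactly} the prescribed set $S$ of wires, each exactly once, while never crossing the wires that lie above $c$ at both ends or below $c$ at both ends. Your suggested recipe (``route the new wire as far to one side as the boundary data permits'') does not obviously produce a reduced configuration: if the new wire hugs, say, the lower envelope of the never-crossed upper wires, a wire of $S$ can weave above and below that envelope (crossing each upper wire only once but the envelope several times), forcing the greedy route to cross it more than once, which is not allowed in a rhombic tiling of $X(w)$. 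A correct proof would have to show, for each $s\in S$, that $s$ spends a nonempty ``window'' of time inside the corridor bounded by the never-crossed wires, and then that a single monotone curve can visit all these windows in an order compatible with the forced crossing order along the new wire; none of this is in your write-up, and it is not a one-line observation.

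It is also worth noting that the paper avoids this issue entirely by arguing in the opposite direction: the algorithm {\sf MONO} takes a tiling $T\in T(p)$, peels its rhombi from the right, transports each peeled rhombus to a sorting step on the occurrence of $p$ in $w$, and tiles the resulting paw of $X(w)$ by rhombi, yielding an \emph{injection} $T(p)\hookrightarrow T(w)$; with Proposition~\ref{prop:elnitsky} this gives $|C(p)|\le|C(w)|$ with no lifting lemma needed. So even if your zone-insertion lemma is true (I believe it is, but it needs a careful proof along the lines sketched above), your route replaces an easy injection with a harder surjection. Finally, your parenthetical claim that in the case $p^+\llcurly w$ Theorem~\ref{thm:patt redwd} yields an injection ``that splits $\Phi$'' is unjustified as stated: an injection $C(p)\hookrightarrow C(w)$ obtained from isolated factors is not automatically a section of your deletion map, and nothing in your argument uses or needs that claim.
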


We now define Elnitsky's polygon, and refer the reader to \cite{elnitsky} for more details.

\begin{defn}
Fix a permutation $w \in \mf{S}_n$. \emph{Elnitsky's polygon}, denoted $X(w)$, is an equilateral $2n$-gon in which the sides are labeled $1, \ldots, n, w(n), \ldots, w(1)$ in counterclockwise order from the top, the left half (labeled $1, \ldots, n$) is convex, and sides are parallel if and only if they have the same label. Any polygon of this form is an \emph{Elnitsky polygon}.
\end{defn}

Note that Elnitsky polygons permit a type of degeneracy when $w$ has fixed points, in which circumstance the left and right borders of the polygon may coincide for one or more edges. Similarly, the left and right borders may intersect in a vertex if $\{w(1),\ldots,w(r)\} = \{1,\ldots,r\}$ for $w \in \mf{S}_n$ and some $r < n$.

The specific angles in $X(w)$ are unimportant. One could also replace the equilateral requirement by a rule that all parallel sides be congruent, but this adds no complexity to the object or the tilings of it that we will consider.

\begin{ex}
$$\begin{minipage}{1in}
$X(3241) = $
\end{minipage}
\begin{minipage}{2in}
\begin{tikzpicture}[scale=1.5]
\draw (0,0) coordinate (a) -- ++(-157.5:1) coordinate (b) -- ++(-112.5:1) coordinate (c) -- ++(-67.5:1) coordinate (d) -- ++(-22.5:1) coordinate (e) -- ++(22.5:1) coordinate (f) -- ++(157.5:1) coordinate (g) -- ++(67.5:1) coordinate (h) -- ++(112.5:1);
\draw ($(a)!0.5!(b)$) node[above] {$1$};
\draw ($(b)!0.5!(c)$) node[left] {$2$};
\draw ($(c)!0.5!(d)$) node[left] {$3$};
\draw ($(d)!0.5!(e)$) node[below] {$4$};
\draw ($(e)!0.5!(f)$) node[below] {$1$};
\draw ($(f)!0.5!(g)$) node[above] {$4$};
\draw ($(g)!0.5!(h)$) node[right] {$2$};
\draw ($(h)!0.5!(a)$) node[right] {$3$};
\end{tikzpicture}
\end{minipage}$$
\end{ex}

Elnitsky looked at rhombic tilings of $X(w)$.

\begin{defn}
Fix a permutation $w$. Let $T(w)$ be the set of rhombic tilings of $X(w)$ where all edges are congruent and parallel to edges of $X(w)$.
\end{defn}

Define a graph with vertex set $T(w)$ and an edge between two tilings if they differ only in the tiling of a single sub-hexagon, as in Figure~\ref{fig:hexagon}. 
\begin{figure}[htbp]
\begin{tikzpicture}
\draw (0,0) coordinate (a) -- ++(30:1) coordinate (b) -- ++(90:1) coordinate (c) -- ++(150:1) coordinate (d) -- ++(210:1) coordinate (e) -- ++(270:1) coordinate (f) -- (a);
\draw (d) -- ++(-90:1) coordinate (g) -- (b);
\draw (g) -- (f);
\draw (1.5,1) node {$\leftrightsquigarrow$};
\draw (3,0) coordinate (h) -- ++(30:1) coordinate (i) -- ++(90:1) coordinate (j) -- ++(150:1) coordinate (k) -- ++(210:1) coordinate (l) -- ++(270:1) coordinate (m) -- (h);
\draw (h) -- ++(90:1) coordinate (n) -- (j);
\draw (n) -- (l);
\end{tikzpicture}
\caption{The two ways to tile a sub-hexagon in Elnitsky's polygon.}\label{fig:hexagon}
\end{figure}
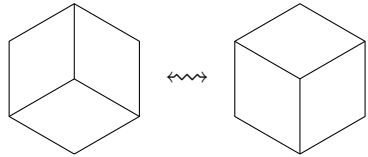
Elnitsky proved in \cite{elnitsky} that this graph is in bijection with the graph $G(w)$ of commutation classes. In particular, this means that $T(w)$ is in bijection with $C(w)$. The details of Elnitsky's bijection are useful to this work, and we review them here. Note that this is a slight variation of Elnitsky's original bijection.

\begin{prop}[{\cite[Theorem 2.2]{elnitsky}}]\label{prop:elnitsky}
Fix $w \in \mf{S}_n$ and $T \in T(w)$. Label the rhombi in $T$ by $1, 2, \ldots$ from right to left, always labeling a rhombus that shares two edges with the rightmost border of $X(w)$ or with tiles that have already been labeled. Use this labeling to create a string ${\sf s_{\ell(w)} \cdots s_2s_1}$, where ${\sf s_i = j}$ if the edges of tile $i$ are $j$th and $(j+1)$st in a path of length $n$ from the topmost to the bottommost vertex in $X(w)$. The map $T \rightarrow {\sf s_{\ell(w)} \cdots s_2s_1}$ gives a bijection between $T(w)$ and $C(w)$ because a given $T$ produced all reduced words within a single commutation class of $w$.
\end{prop}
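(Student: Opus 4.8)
The plan is to follow Elnitsky's argument \cite{elnitsky}, adapted to the right-to-left labeling used here, organized around \emph{cross-sections}: paths from the topmost to the bottommost vertex of $X(w)$ whose edges are edges of the tiling $T$ and which separate the tiles lying to their right (those peeled so far) from the rest. Such a path uses exactly $n$ edges, and reading their labels from top to bottom yields a permutation of $1,\dots,n$ in one-line notation; the right border of $X(w)$ is the cross-section before anything is peeled, and it reads $w(1)\cdots w(n)$, while the left border reads $12\cdots n$. The first ingredient is the elementary geometric fact about $X(w)$ established in \cite{elnitsky}: when a tile is removable from the current cross-section --- i.e.\ its two edges lying on the cross-section are the $j$th and $(j{+}1)$st of that path --- peeling it transposes the entries in positions $j$ and $j{+}1$ of the current word, and the entry in position $j$ exceeds the one in position $j{+}1$ beforehand, so the move removes exactly one inversion.

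Granting this, I would first check that the labeling procedure is well defined and produces an element of $R(w)$. As long as the current cross-section is not the left border, the unpeeled region is a nonempty rhombus-tiled polygon and hence has a removable tile (take one at a suitable convex corner of that region), so the process never stalls and terminates at the left border. Each of the $\ell$ steps removes one inversion, and we begin with $\ell(w)$ inversions and end with none, so $\ell=\ell(w)$; moreover, with ${\sf s_1},\dots,{\sf s_\ell}$ the recorded positions (tile $1$ peeled first) we get $w\,\sigma_{{\sf s_1}}\sigma_{{\sf s_2}}\cdots\sigma_{{\sf s_\ell}}=\mathrm{id}$, that is, $w=\sigma_{{\sf s_\ell}}\cdots\sigma_{{\sf s_2}}\sigma_{{\sf s_1}}$. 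This writes $w$ as a product of $\ell(w)$ adjacent transpositions, so the string ${\sf s_\ell\cdots s_2 s_1}$ is reduced and lies in $R(w)$.

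Next I would show that all reduced words obtained from a fixed $T$ form a single commutation class. The admissible peeling orders of $T$ are exactly the linear extensions of the partial order in which one tile precedes another whenever, at the moment the first becomes removable, it blocks the second from the right; any two linear extensions are joined by transpositions of adjacent incomparable tiles. If two incomparable tiles are peeled consecutively, then just before their removal both are removable from the same cross-section, and two simultaneously removable tiles cannot share a cross-section edge (each cross-section edge borders a unique tile on its unpeeled side), so their recorded positions differ by at least $2$; thus interchanging them is a commutation, and it realizes the other peeling order. Conversely, a single commutation applied to a word coming from $T$ interchanges two letters differing by at least $2$, i.e.\ two tiles with disjoint cross-section edge-pairs peeled consecutively, and these can be peeled in the opposite order --- so every word in the commutation class of such a word also arises from $T$. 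Hence $T\mapsto C_T:=[\text{the resulting commutation class}]$ is a well-defined map $T(w)\to C(w)$.

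Finally, for bijectivity I would build the inverse by \emph{replaying} a reduced word. Given ${\sf u}={\sf i_1\cdots i_\ell}\in R(w)$, start from the right border and, using the standard fact that along a reduced word each step $\sigma_{{\sf i_1}}\cdots\sigma_{{\sf i_{t-1}}}\mapsto\sigma_{{\sf i_1}}\cdots\sigma_{{\sf i_t}}$ removes exactly one inversion, glue in at step $t$ the rhombus whose two cross-section edges sit in the prescribed position; validity of each gluing is precisely that inversion-removal condition, and after $\ell(w)$ gluings the $\ell(w)$ rhombi exactly tile $X(w)$. This yields $T_{\sf u}\in T(w)$, and interchanging two commuting letters of ${\sf u}$ (which differ by at least $2$) reorders the gluing of two independent rhombi without changing the final tiling, so ${\sf u}\mapsto T_{\sf u}$ descends to a map $C(w)\to T(w)$ which is a two-sided inverse of $T\mapsto C_T$ by the analysis above. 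The main obstacle I anticipate is the geometric input --- showing that the larger of a rhombus's two labels always lies ``above'' on the cross-section (so that peeling and replaying correspond to inversion removal and to placing a legitimately shaped, non-overlapping rhombus), together with the bookkeeping that identifies removability with the inversion structure of the evolving word; everything else is routine, and since the statement is only a mild variant of \cite[Theorem 2.2]{elnitsky}, these geometric points can be imported from there.
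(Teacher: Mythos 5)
The paper gives no proof of this proposition at all---it is quoted from Elnitsky \cite[Theorem 2.2]{elnitsky} with only the remark that the labeling is a slight variant of his---and your reconstruction is exactly the cross-section/peeling argument of that cited source (peeling removable rhombi as inversion-removing right multiplications, commutation-equivalence of peeling orders of a fixed tiling, and an inverse built by replaying a reduced word), so it is essentially the same approach and sound in outline. One wording fix: the prefix extension $\sigma_{i_1}\cdots\sigma_{i_{t-1}} \mapsto \sigma_{i_1}\cdots\sigma_{i_t}$ of a reduced word \emph{adds} an inversion rather than removing one, so in the replay step the letters of ${\sf u}$ must be processed from the right end of the word (consistent with your own convention that the first tile peeled carries the last letter ${\sf s_1}$), which is clearly what you intend.
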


\begin{ex}
Elnitsky's bijection for $3241 \in \mf{S}_4$ is
$$
\begin{minipage}{1.3in}
$\{{\sf 2123}\} \ \leftrightsquigarrow \ T_1 =$
\end{minipage}
\begin{minipage}{1.35in}
\begin{tikzpicture}
\draw (0,0) coordinate (a) -- ++(-157.5:1) coordinate (b) -- ++(-112.5:1) coordinate (c) -- ++(-67.5:1) coordinate (d) -- ++(-22.5:1) coordinate (e) -- ++(22.5:1) coordinate (f) -- ++(157.5:1) coordinate (g) -- ++(67.5:1) coordinate (h) -- ++(112.5:1);
\draw (d) -- (g);
\draw (d) -- ++(67.5:1) coordinate (i) -- (b);
\draw (i) -- (h);
\end{tikzpicture}
\end{minipage}
\begin{minipage}{1.7in}
$\{{\sf 1213}, {\sf 1231}\} \ \leftrightsquigarrow\ T_2 =$
\end{minipage}
\begin{minipage}{1.35in}
\begin{tikzpicture}
\draw (0,0) coordinate (a) -- ++(-157.5:1) coordinate (b) -- ++(-112.5:1) coordinate (c) -- ++(-67.5:1) coordinate (d) -- ++(-22.5:1) coordinate (e) -- ++(22.5:1) coordinate (f) -- ++(157.5:1) coordinate (g) -- ++(67.5:1) coordinate (h) -- ++(112.5:1);
\draw (d) -- (g);
\draw (c) -- ++(22.5:1) coordinate (i) -- (a);
\draw (i) -- (g);
\end{tikzpicture}
\end{minipage}$$
because of $T_1$'s unique labeling
$$\begin{tikzpicture}
\draw (0,0) coordinate (a) -- ++(-157.5:1) coordinate (b) -- ++(-112.5:1) coordinate (c) -- ++(-67.5:1) coordinate (d) -- ++(-22.5:1) coordinate (e) -- ++(22.5:1) coordinate (f) -- ++(157.5:1) coordinate (g) -- ++(67.5:1) coordinate (h) -- ++(112.5:1);
\draw (d) -- (g);
\draw (d) -- ++(67.5:1) coordinate (i) -- (b);
\draw (i) -- (h);
\draw ($.25*(d)+.25*(e)+.25*(f)+.25*(g)$) node {$1$};
\draw ($.25*(d)+.25*(g)+.25*(h)+.25*(i)$) node {$2$};
\draw ($.25*(a)+.25*(b)+.25*(i)+.25*(h)$) node {$3$};
\draw ($.25*(b)+.25*(c)+.25*(d)+.25*(i)$) node {$4$};
\end{tikzpicture}$$
corresponding to $\{{\sf 2123}\}$, and the two possible labelings of $T_2$
$$
\begin{minipage}{1.35in}
\begin{tikzpicture}
\draw (0,0) coordinate (a) -- ++(-157.5:1) coordinate (b) -- ++(-112.5:1) coordinate (c) -- ++(-67.5:1) coordinate (d) -- ++(-22.5:1) coordinate (e) -- ++(22.5:1) coordinate (f) -- ++(157.5:1) coordinate (g) -- ++(67.5:1) coordinate (h) -- ++(112.5:1);
\draw (d) -- (g);
\draw (c) -- ++(22.5:1) coordinate (i) -- (a);
\draw (i) -- (g);
\draw ($.25*(g)+.25*(h)+.25*(a)+.25*(i)$) node {$2$};
\draw ($.25*(d)+.25*(e)+.25*(f)+.25*(g)$) node {$1$};
\draw ($.25*(c)+.25*(d)+.25*(g)+.25*(i)$) node {$3$};
\draw ($.25*(a)+.25*(b)+.25*(c)+.25*(i)$) node {$4$};
\end{tikzpicture}
\end{minipage}
\begin{minipage}{.5in}
\text{and}
\end{minipage}
\begin{minipage}{1.35in}
\begin{tikzpicture}
\draw (0,0) coordinate (a) -- ++(-157.5:1) coordinate (b) -- ++(-112.5:1) coordinate (c) -- ++(-67.5:1) coordinate (d) -- ++(-22.5:1) coordinate (e) -- ++(22.5:1) coordinate (f) -- ++(157.5:1) coordinate (g) -- ++(67.5:1) coordinate (h) -- ++(112.5:1);
\draw (d) -- (g);
\draw (c) -- ++(22.5:1) coordinate (i) -- (a);
\draw (i) -- (g);
\draw ($.25*(d)+.25*(e)+.25*(f)+.25*(g)$) node {$2$};
\draw ($.25*(g)+.25*(h)+.25*(a)+.25*(i)$) node {$1$};
\draw ($.25*(c)+.25*(d)+.25*(g)+.25*(i)$) node {$3$};
\draw ($.25*(a)+.25*(b)+.25*(c)+.25*(i)$) node {$4$};
\end{tikzpicture}
\end{minipage}$$
corresponding to $\{{\sf 1213}, {\sf 1231}\}$.
\end{ex}

The breadth of Theorem~\ref{thm:patt redwd} enables us to use more generic tiles in Elnitsky's polygon.

\begin{defn}
Fix a permutation $w$ and tile $X(w)$ by other Elnitsky polygons, always oriented so that the lefthand path from the highest to the lowest vertex is convex. Let $T^*(w)$ be the set of such tilings, called \emph{paw tilings} of $X(w)$, and call each tile in $T \in T^*(w)$ a \emph{paw}.
\end{defn}

Note that $T(w) \subseteq T^*(w)$ because each rhombus is an $X(21)$-paw.

\begin{ex}\label{ex:paw tiling}
Figure~\ref{fig:paw tiling} gives an element of $T^*(352641)$. It has two $X(21)$-paws, one $X(312)$-paw, and one $X(3421)$-paw. All tile edges have been labeled for clarity.
\begin{figure}[htbp]
$$\begin{tikzpicture}[scale=1.5]
\draw (0,0) coordinate (a) -- ++(-165:1) coordinate (b) -- ++(-135:1) coordinate (c) -- ++(-105:1) coordinate (d) -- ++(-75:1) coordinate (e) -- ++(-45:1) coordinate (f) -- ++(-15:1) coordinate (g) -- ++(15:1) coordinate (h) -- ++(105:1) coordinate (i) -- ++(165:1) coordinate (j) -- ++(45:1) coordinate (k) -- ++(135:1) coordinate (l) -- ++(75:1);
\draw (d) -- ++(-45:1) coordinate (m) -- (j);
\draw (m) -- (f);
\draw (g) -- ++(105:1) coordinate (n) -- (i);
\draw ($.5*(a) + .5*(b)$) node[above] {$1$};
\draw ($.5*(c) + .5*(b)$) node[above left] {$2$};
\draw ($.5*(c) + .5*(d)$) node[left] {$3$};
\draw ($.5*(e) + .5*(d)$) node[left] {$4$};
\draw ($.5*(e) + .5*(f)$) node[below left] {$5$};
\draw ($.5*(g) + .5*(f)$) node[below] {$6$};
\draw ($.5*(g) + .5*(h)$) node[below] {$1$};
\draw ($.5*(i) + .5*(h)$) node[above right] {$4$};
\draw ($.65*(i) + .35*(j)$) node[above] {$6$};
\draw ($.5*(k) + .5*(j)$) node[ right] {$2$};
\draw ($.5*(k) + .5*(l)$) node[above right] {$5$};
\draw ($.5*(a) + .5*(l)$) node[right] {$3$};
\draw ($.5*(m) + .5*(d)$) node[above right] {$5$};
\draw ($.5*(m) + .5*(j)$) node[above] {$1$};
\draw ($.5*(m) + .5*(f)$) node[right] {$4$};
\draw ($.5*(g) + .5*(n)$) node[right] {$4$};
\draw ($.5*(i) + .5*(n)$) node[below] {$1$};
\end{tikzpicture}$$
\caption{A paw tiling of $X(352641)$, described in Example~\ref{ex:paw tiling}.}\label{fig:paw tiling}
\end{figure}
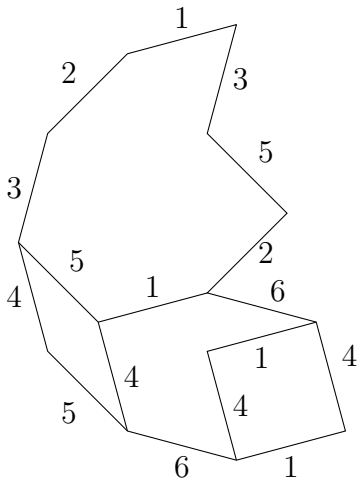
\end{ex}

In any tiling of $X(w)$, the \emph{edges} of a tile refer to its parallel edges along the border of $X(w)$.

The procedure described in Elnitsky's bijection (Proposition~\ref{prop:elnitsky}) builds a reduced word in a single direction. More precisely, labeling rhombi from the righthand side of $X(w)$ corresponds to multiplying $w$ on the right by adjacent transpositions (equivalently, reducing the length of $w$ by swapping adjacent positions). Recall the algorithm laid out in the proof of Theorem~\ref{thm:patt redwd}. Whenever possible, the statistic $\gap(p,w)$ was reduced by means of position swaps (that is, right multiplication by adjacent transpositions). This was done intentionally, to match the rightward favoritism of Elnitksy's bijection. However, there were two scenarios in the proof of Theorem~\ref{thm:patt redwd} that required value swaps (that is, left multiplication). These were, using the terminology of that proof, the second bullet point of the loop $(\dagger)$, and its analogue when the leftward smaller pattern entries to $x$ have a descent, but the rightward larger ones do not.

\begin{defn}
Suppose that $p \prec w$, and let $S$ be a set of values forming a $p$-pattern in $w$. Call $(p,w)$ a \emph{value-stable} pair if $w = w'v$ for some permutations $w'$ and $v$ such that
\begin{itemize}
\item $\ell(w) = \ell(w') + \ell(v)$ and
\item a $p$-pattern occurs in consecutive positions of $w'$, and the set of values forming that pattern in $w'$ is $S$.
\end{itemize}
The $p$-pattern formed by $S$ is the \emph{value-stable occurrence} of the pair.
\end{defn}

Note that requiring $(p,w)$ to be value-stable is more restrictive than requiring that $p^+ \llcurly w$.

\begin{ex}\ 
\begin{itemize}
\item The pair $(352641,3421)$ is value-stable because $352641 = (352164)(123564)$. The value-stable occurrence that appears consecutively in $352164$, which is also present in $352641$, is $3521$.
\item The pair $(241365,21354)$ is not value-stable.
\end{itemize}
\end{ex}

Using Elnitsky's bijection, Theorem~\ref{thm:patt redwd} has an immediate implication for the paw tilings in the context of value-stable pairs.

\begin{cor}\label{cor:paw patterns}
If $(p,w)$ is value-stable, then there is a paw tiling of $X(w)$ that contains an $X(p)$-paw.
\end{cor}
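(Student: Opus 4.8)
The plan is to convert the value-stable factorization of $w$ into a reduced word of the shape demanded by Elnitsky's bijection, and then read off the paw. Write $w = w'v$ with $\ell(w)=\ell(w')+\ell(v)$ and with an occurrence $\langle p\rangle$ in consecutive positions $\{x+1,\dots,x+k\}$ of $w'$, where $p\in\mf{S}_k$. First I would run the construction in the proof of Lemma~\ref{lem:consecutive} on $w'$: letting $w''$ be $w'$ with the entries in positions $x+1,\dots,x+k$ sorted into increasing order, for any chosen ${\sf s}\in R(p)$ there is ${\sf t}\in R(w'')$ with ${\sf ts'}\in R(w')$, where ${\sf s'}$ is the shift of ${\sf s}$ by $x$, and ${\sf s'}$ is isolated on $[x+1,x+k-1]$ in ${\sf ts'}$. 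Picking any ${\sf r}\in R(v)$, length-additivity of $w=w'v$ makes ${\sf ts'r}$ a word of length $\ell(w'')+\ell(p)+\ell(v)=\ell(w')+\ell(v)=\ell(w)$, so ${\sf ts'r}\in R(w)$; moreover $w' = w''p'$ is itself length-additive, where $p'$ is $p$ shifted onto positions and values $\{x+1,\dots,x+k\}$ and the identity elsewhere.

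Next I would push ${\sf ts'r}$ through Elnitsky's bijection as recorded in Proposition~\ref{prop:elnitsky}, using the (already noted) fact that peeling rhombi off the right border of $X(w)$ corresponds precisely to deleting letters from the right end of a reduced word. Peeling the $\ell(v)$ rhombi coming from ${\sf r}$ reduces $w$ to $wv^{-1}=w'$, so the untiled region that remains is $X(w')$, sharing its convex left border with that of $X(w)$, while the rhombi removed so far tile $X(w)\setminus X(w')$. Next, peeling the $\ell(p)$ rhombi coming from ${\sf s'}$ reduces $w'$ to $w'(p')^{-1}=w''$; because ${\sf s'}$ is isolated on $[x+1,x+k-1]$ in ${\sf ts'}$, all of those rhombi lie in $X(w')$ and exactly fill $Y := X(w')\setminus X(w'')$, and finally the $\ell(w'')$ rhombi from ${\sf t}$ tile $X(w'')$. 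The upshot is a rhombic tiling of $X(w)$ in which the $\ell(p)$ rhombi of $Y$ form a contiguous block; merging that block into the single region $Y$ and leaving every other rhombus as an $X(21)$-paw yields an element of $T^*(w)$, and I claim $Y$ is an $X(p)$-paw.

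The step I expect to be the real obstacle is that last claim — that $Y=X(w')\setminus X(w'')$ is a genuine paw of shape $X(p)$. Here is how I would argue it. Since $w'$ and $w''$ agree outside positions $x+1,\dots,x+k$, their polygons share the entire convex left border and their right borders differ only along the $k$ edges in positions $x+1,\dots,x+k$; hence $Y$ is a $2k$-gon, one of whose $k$-edge sides carries the labels $w''(x+1)<\dots<w''(x+k)$ (the pattern values, sorted) and the other the labels $w'(x+1),\dots,w'(x+k)$ (the same multiset, permuted according to $p$). The sorted side — being the one realized inside $X(w')$ by the convex arc of edges $x+1,\dots,x+k$ — is the convex left path, and up to the choice of edge directions (immaterial by the remark following the definition of $X(w)$) the polygon $Y$ is exactly $X(p)$. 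Once this geometric identification is in hand, the corollary follows. Two routine points worth checking along the way are that $Y$ really is a simple $2k$-gon (so that ``merging'' the block of rhombi is legitimate), which uses that the two right-border arcs share their endpoints and do not cross, and that the isolation hypothesis is exactly what forces the ${\sf s'}$-rhombi to stay within this region rather than spilling into $X(w'')$ or $X(w)\setminus X(w')$.
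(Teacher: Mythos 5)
Your proof is correct and follows essentially the same route the paper intends: the paper gives no separate argument for this corollary, treating it as an immediate consequence of Lemma~\ref{lem:consecutive} (via the value-stable factorization $w=w'v$) together with Elnitsky's bijection, which is exactly what you spell out by building ${\sf ts'r}\in R(w)$ and peeling rhombi from the right. One small remark: the ${\sf s'}$-rhombi fill $X(w')\setminus X(w'')$ automatically from the peeling order, so isolation is not actually needed at that step --- it only becomes relevant for the isolated-paw statement of Corollary~\ref{cor:isolated paw patterns}.
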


Unfortunately, the converse to Corollary~\ref{cor:paw patterns} is false. For example, there is a paw tiling of the convex hexagon $X(321)$ that contains an $X(312)$-paw, as shown in Figure~\ref{fig:non isolated paw}.
\begin{figure}[htbp]
\begin{tikzpicture}
\draw (0,0) coordinate (a) -- ++(30:1) coordinate (b) -- ++(90:1) coordinate (c) -- ++(150:1) coordinate (d) -- ++(210:1) coordinate (e) -- ++(-90:1) coordinate (f) -- (a);
\draw (a) -- ++(90:1) coordinate (g) -- (c);
\end{tikzpicture}
\caption{Tiling $X(321)$ with an $X(312)$-paw and a rhombus.}\label{fig:non isolated paw}
\end{figure}
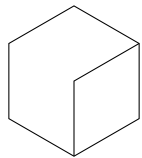
To understand when the converse to Corollary~\ref{cor:paw patterns} might be true, we need a notion of ``isolation'' for paws.

\begin{defn}
Consider a permutation $w$ and an $X(p)$-paw in some $T \in T^*(w)$. Let $S \subseteq T^*(w)$ be the paw tilings containing this particular $X(p)$-paw in this particular position, and tiling the rest of $X(w)$ by rhombi. If no element of $S$ has a rhombus sharing two edges with the righthand side of the $X(p)$-paw, then the $X(p)$-paw is \emph{isolated} in $T$.
\end{defn}

Observe that an $X(k\cdots21)$-paw is necessarily isolated because no rhombus can share two edges with its convex righthand side.

\begin{reppex}{ex:paw tiling}
The $X(3421)$-paw in Figure~\ref{fig:paw tiling} is isolated, but the $X(312)$-paw is not because of its potential (in fact, required) adjacency to the rhombus with edges $\{1,4\}$.
\end{reppex}

The factor isolation described in Theorem~\ref{thm:patt redwd} reveals, now, a biconditional analogue to Corollary~\ref{cor:paw patterns}. In other words, Corollary~\ref{cor:isolated paw patterns} describes exactly when we can fit an isolated $X(p)$-paw into a paw tiling of $X(w)$.

\begin{cor}\label{cor:isolated paw patterns}
There is a paw tiling of $X(w)$ containing an isolated $X(p)$-paw if and only if the pair $(p,w)$ is value-stable. Moreover, the edges of this isolated $X(p)$-paw are $\{i_1,\ldots,i_k\}$ if and only if $\{i_1,\ldots,i_k\}$ form a value-stable occurrence of $p$ in $w$.
\end{cor}

Because the permutation $k\cdots 21$ is vexillary, and because $X(k\cdots 21)$ is a convex $2k$-gon, we used Corollary~\ref{cor:rdpp main} to study zonotopal tilings of $X(w)$ in \cite{tenner rdpp}; that is, tilings by convex paws. In particular, there is a zonotopal tiling of $X(w)$ containing a $2k$-gon with sides labeled $i_1 < \cdots < i_k$ if and only if $i_k \cdots i_1$ form a $(k\cdots 21)$-pattern in $w$ \cite[Theorem~6.4]{tenner rdpp}. Theorem~\ref{thm:patt redwd} and Corollary~\ref{cor:isolated paw patterns} here now allow us to generalize that result completely.

\section{Enumerative applications}\label{sec:enum apps}

We now turn our attention and techniques to enumerative questions. In particular, we give applications of this work in two directions -- an analysis of the influence of permutation patterns on $|R(w)|$ and $|C(w)|$, and a sampling of how it can be applied to enumerative questions about pattern avoidance. That latter discussion gives an elegant bijection between partitions and a class of pattern avoiding permutations, as well as a refinement of the Catalan numbers. Moreover, and perhaps most interestingly, it demonstrates how our work here can give a new framework for analyzing problems of pattern avoidance enumeration.

\subsection{Enumerative influence of patterns on reduced words}\

In \cite{stanley}, Stanley showed that $|R(w)|$ is a linear combination of the number of standard Young tableaux of certain shapes. For example, if $w$ is vexillary, then $|R(w)|$ is equal to the number of standard Young tableaux of a single shape $\lambda(w)$. In \cite{billey pawlowski}, Billey and Pawlowski defined further classes of permutations based on how many shapes have nonzero coefficients in the sum. While these collections $R(w)$ can be enumerated, it is often complicated to do so. Moreover, the results do not give any indication of how $|R(p)|$ and $|R(w)|$ might be related (if at all) when $p \prec w$, whereas Theorem~\ref{thm:patt redwd} suggests that indeed some relationship is likely. Similarly, as mentioned earlier in this article, the number $|C(w)|$ of commutation classes of a permutation is very poorly understood outside of a few special cases.

In this section, we look at both $|R(w)|$ and $|C(w)|$ from the perspective of pattern avoidance. We will show that they are both monotonically increasing with respect to pattern containment, and we will completely characterize the $p\prec w$ for which equality is maintained, in each case.

Certainly Theorem~\ref{thm:patt redwd} implies the following result.

\begin{cor}\label{cor:counting R(w) for w with p+}
If $p^+ \llcurly w$ then $|R(p)| \le |R(w)|$.
\end{cor}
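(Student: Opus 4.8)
The plan is to deduce Corollary~\ref{cor:counting R(w) for w with p+} directly from Theorem~\ref{thm:patt redwd} by exhibiting an injection $R(p) \hookrightarrow R(w)$. Since $p^+ \llcurly w$, Theorem~\ref{thm:patt redwd} tells us that some element of $R(p)$ appears as a shifted isolated factor in some element of $R(w)$; more precisely, there exist a shift $x$ and a decomposition ${\sf abc} \in R(w)$ in which ${\sf b}$ is the shift-by-$x$ of a reduced word of $p$, and ${\sf b}$ is isolated in ${\sf abc}$. Fix such ${\sf a}$, $x$, and ${\sf c}$ once and for all.

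The key step is the first remark following Theorem~\ref{thm:patt redwd}: when ${\sf abc} \in R(w)$ with ${\sf b}$ a shifted isolated reduced word of $p$, one may replace ${\sf b}$ by the same shift of \emph{any} reduced word of $p$ and still obtain an element of $R(w)$. So I would define the map $\Phi\colon R(p) \to R(w)$ by sending ${\sf r} \in R(p)$ to ${\sf a}\,({\sf r}+x)\,{\sf c}$, where ${\sf r}+x$ denotes the shift of ${\sf r}$ by $x$. By that remark, $\Phi({\sf r}) \in R(w)$ for every ${\sf r}$. Injectivity is immediate: if $\Phi({\sf r}) = \Phi({\sf r}')$, then since ${\sf a}$ and ${\sf c}$ have fixed lengths ($\ell(u)$ and $\ell(v)$ in the notation of the theorem), the middle factors of length $\ell(p)$ must agree, i.e.\ ${\sf r}+x = {\sf r}'+x$, hence ${\sf r} = {\sf r}'$. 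Therefore $|R(p)| \le |R(w)|$.

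The only point requiring a little care — and the step I expect to be the main (minor) obstacle — is justifying that the single decomposition ${\sf abc}$ really does serve for \emph{all} of $R(p)$ simultaneously, rather than needing a possibly different decomposition for each reduced word of $p$. This is exactly what the first post-theorem remark provides: the prefix ${\sf a} \in R(u)$ and suffix ${\sf c} \in R(v)$ depend only on the pattern occurrence (through the permutations $u$ and $v$ and the shift $x$), not on the particular reduced word of $p$ occupying the isolated middle slot, because the isolation conditions $\ell(u\sigma_i) > \ell(u)$ and $\ell(\sigma_i v) > \ell(v)$ for $i \in [x+1, x+k-1]$ are statements about $u$ and $v$ alone. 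Hence swapping the middle factor leaves ${\sf a}({\sf r}+x){\sf c}$ reduced of length $\ell(u) + \ell(p) + \ell(v) = \ell(w)$. With that observation in hand, the proof is the one-line injection argument above; I would write it out in two or three sentences citing Theorem~\ref{thm:patt redwd} and the remark immediately following it.
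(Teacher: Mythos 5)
Your proposal is correct and is essentially the paper's own argument: the paper simply states that Theorem~\ref{thm:patt redwd} implies the corollary, with the intended mechanism being exactly the substitution property noted in the remark after that theorem (fix the prefix ${\sf a}$, suffix ${\sf c}$, and shift $x$ from one isolated occurrence, and swap in the shift of any element of $R(p)$ to obtain an injection $R(p) \hookrightarrow R(w)$). Your write-up just makes that injection explicit, so there is no substantive difference.
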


We can actually strengthen Corollary~\ref{cor:counting R(w) for w with p+} to a property about any $p \prec w$, regardless of whether $w$ contains all of $p^+$.

Consider the following algorithm, which we defined in \cite{tenner rdpp}.

\begin{myquote}

\setlength{\leftmargini}{0in}
\setlength{\leftmarginii}{.18in}
\setlength{\leftmarginiii}{.18in}
\setlength{\leftmarginiv}{.18in}

\noindent \textsf{\textbf{Algorithm} MONO}

\nopagebreak[4]

\noindent \textsf{INPUT: $p \prec w$ and $T \in T(p)$ labeled as described in Proposition~\ref{prop:elnitsky}.}

\noindent \textsf{OUTPUT: $T' \in T(w)$.}

\smallskip

\begin{enumerate}
\setcounter{enumi}{-1}
\renewcommand{\labelenumi}{\textsf{Step \arabic{enumi}}.}

\item Set $w_{0}:=w$, $p_{0}:=p$, $T_{0}:=T$, $T'_{0}:=\emptyset$, and $i:=0$.
\item\label{monostart} If $p_{i} = e$, then let $T'_{i+1}$ be $T'_{i}$ together with any tiling of $X(w_{i})$.  \textsf{OUTPUT $T'_{i+1}$}.
\item Set $j_{i}$ so that tile $i$ has edges $p_{i}(j_{i}) > p_{i}(j_{i}+1)$.
\item Define $r$ and $s$ so that $w_{i}(r) = \langle p_{i}(j_{i}) \rangle$ and $w_{i}(s) = \langle p_{i}(j_{i}+1) \rangle$.
\item Let $w_{i+1}$ be obtained from the one-line notation of $w_{i}$ be writing $\{w_{i}(r), \ldots, w_{i}(s)\}$ in increasing order and leaving all other values unchanged.
\item The shapes $X(w_{i+1})$ and $X(w_{i})$ differ in a paw whose lefthand (respectively, righthand) boundary is part of the righthand boundary of $X(w_{i+1})$ (respectively, $X(w_{i})$). Let $t_{i}$ be a rhombic tiling of this paw, and define $T'_{i+1}$ to be $T'_{i}$ together with $t_{i}$.
\item Set $i:=i+1$ and \textsf{GOTO Step~\ref{monostart}}.
\end{enumerate}
\end{myquote}

This {\sf MONO} gives an injection $T(p) \hookrightarrow T(w)$, yielding the following result.

\begin{prop}[{\cite[Theorem 5.10]{tenner rdpp}}]\label{prop:C mono}
If $p \prec w$ then $|C(p)| \le |C(w)|$.
\end{prop}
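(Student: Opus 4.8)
The statement to prove is Proposition~\ref{prop:C mono}: if $p \prec w$, then $|C(p)| \le |C(w)|$. The natural approach, given that Algorithm {\sf MONO} has just been laid out, is to verify that {\sf MONO} does what its surrounding text claims: namely, that it furnishes an injection $T(p) \hookrightarrow T(w)$. Since Elnitsky's bijection (Proposition~\ref{prop:elnitsky}) identifies $T(w)$ with $C(w)$ and $T(p)$ with $C(p)$, an injection $T(p)\hookrightarrow T(w)$ immediately yields $|C(p)| = |T(p)| \le |T(w)| = |C(w)|$, which is the desired inequality.

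\textbf{Key steps.} First I would check that {\sf MONO} is well-defined: at each pass through Step~\ref{monostart} with $p_i \neq e$, tile $i$ of the (appropriately updated) labeled tiling $T_i$ has edges $p_i(j_i) > p_i(j_i+1)$ recording a genuine descent, and the positions $r,s$ with $w_i(r) = \langle p_i(j_i)\rangle$ and $w_i(s) = \langle p_i(j_i+1)\rangle$ exist because the $p_i$-pattern persists inside $w_i$ (this is the content of Lemma~\ref{lem:extending a pattern}: sorting a block of values between two pattern entries cannot destroy the pattern, since we only ever reorder values strictly between $\langle p_i(j_i+1)\rangle$ and $\langle p_i(j_i)\rangle$, none of which are themselves the two pattern entries being separated, and in fact the sort only decreases length by exactly $\ell$(that transposition) at the pattern level). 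Passing from $w_i$ to $w_{i+1}$ by sorting $\{w_i(r),\dots,w_i(s)\}$ into increasing order strictly decreases $\ell(w_i)$, and simultaneously $p_{i+1}$ is $p_i$ with its tile-$i$ descent removed, so $\ell(p_i)$ decreases by one; hence the loop terminates with $p_N = e$ for some $N = \ell(p)$. At termination, the accumulated partial tiling $T'_N$ covers exactly $X(w) \setminus X(w_N)$ by rhombi (Step~5 adds a rhombic tiling $t_i$ of the "paw" difference between $X(w_i)$ and $X(w_{i+1})$, and these differences telescope along the righthand boundary), and Step~\ref{monostart} completes it by any tiling of the remaining $X(w_N)$; taking a fixed canonical completion makes the output a single well-defined $T' \in T(w)$.

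\textbf{Injectivity.} The main obstacle is verifying that $T \mapsto T'$ is injective. The key observation is that the rhombi $t_i$ added in Step~5 sit at specific positions along the current righthand boundary of $X(w_i)$, and the label ${\sf j}$ recorded for a rhombus in $t_i$ under Elnitsky's labeling is exactly determined by the descent $p_i(j_i)>p_i(j_i+1)$ of tile $i$ together with the accumulated data — so reading off the labels of the rhombi of $T'$ in the order they were created recovers the label sequence of $T$ tile-by-tile. More precisely, I would argue that from $T' \in T(w)$, restricting Elnitsky's right-to-left rhombus labeling to the sub-region $X(w)\setminus X(w_N)$ reproduces the descents $p_0(j_0), p_1(j_1), \dots$ in order, and these descents reconstruct both the permutation $p$ and the labeled tiling $T$ (the labeled tiling of $X(p)$ is precisely the record of which descent is "peeled" at each step). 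Hence distinct inputs $T_1 \neq T_2$ in $T(p)$, differing at some first tile $i$, produce outputs differing in the $i$-th rhombus created, so $T'_1 \neq T'_2$. Assembling these pieces — well-definedness, termination, and injectivity — gives the injection $T(p)\hookrightarrow T(w)$ and therefore $|C(p)| \le |C(w)|$. I would cite \cite[Theorem 5.10]{tenner rdpp} for the detailed verification and present the argument above as a sketch, since the full bookkeeping for injectivity is the delicate part but is routine once the telescoping structure of the $t_i$'s is made explicit.
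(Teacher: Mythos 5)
Your proposal follows the paper's own route exactly: the paper likewise deduces Proposition~\ref{prop:C mono} from the fact that {\sf MONO} furnishes an injection $T(p) \hookrightarrow T(w)$, combined with Elnitsky's correspondence between $T(\cdot)$ and $C(\cdot)$, deferring the detailed verification to \cite[Theorem 5.10]{tenner rdpp}. Your added sketch of well-definedness, termination, and injectivity is consistent with that argument, so this is essentially the same proof.
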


In fact, an analogous property holds for reduced words.

\begin{thm}\label{thm:R mono}
If $p \prec w$ then $|R(p)| \le |R(w)|$.
\end{thm}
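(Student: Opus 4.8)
The plan is to obtain the inequality by refining Algorithm~{\sf MONO}. That algorithm already produces an injection $T(p)\hookrightarrow T(w)$, hence Proposition~\ref{prop:C mono}; what I want is the analogous injection $R(p)\hookrightarrow R(w)$, obtained by carrying labels through {\sf MONO}. Fix, once and for all, an occurrence of $p$ in $w$. Given ${\sf s}\in R(p)$, Proposition~\ref{prop:elnitsky} lets me regard ${\sf s}$ as a \emph{labeled} tiling $T\in T(p)$; I run {\sf MONO} on $T$ with the labels attached, and whenever {\sf MONO} makes a free choice --- the rhombic tiling $t_i$ of the paw produced in Step~5, and the tiling of $X(w_{\mathrm{fin}})$ inserted in Step~1 --- I fix that choice, together with a fixed labeling of the inserted piece compatible with {\sf MONO}'s right-to-left construction. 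The output is a labeled tiling $T'\in T(w)$, i.e.\ a reduced word $\Phi({\sf s})\in R(w)$. The observation that {\sf MONO} descends to commutation classes says nothing about $\Phi$ by itself, so injectivity of $\Phi$ is the only substantive point.

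For injectivity I would pass through commutation classes rather than invert {\sf MONO} letter by letter. By Proposition~\ref{prop:C mono}, $\Phi$ maps the entire commutation class $c$ of ${\sf s}$ into a single commutation class $\phi(c)\in C(w)$, and $\phi$ itself is injective; so it suffices to show $\Phi$ is injective on each $c$. If $T$ is the tiling of $c$ and $T'={\sf MONO}(T)$ the tiling of $\phi(c)$, then the number of reduced words in $c$ equals the number $e(\mathrm{heap}(T))$ of linear extensions of the heap poset of $T$ (the valid labelings of $T$ in Proposition~\ref{prop:elnitsky}), and similarly for $\phi(c)$, so it is enough to prove
$$e\big(\mathrm{heap}(T')\big)\ \ge\ e\big(\mathrm{heap}(T)\big).$$
Summing this over $c\in C(p)$ and using that the classes $\phi(c)$ are distinct then gives $|R(p)|=\sum_c e(\mathrm{heap}(T_c))\le\sum_c e(\mathrm{heap}(T'_c))\le|R(w)|$, which is the theorem.

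To get the inequality on linear extensions I would read off the structure of $\mathrm{heap}(T')$ from {\sf MONO}. By Step~5 the paws $t_0,t_1,\dots$ are nested, each $t_i$ glued along part of the right boundary of the polygon that remains after $t_0,\dots,t_{i-1}$ are removed, with $X(w_{\mathrm{fin}})$ the innermost region, and the $i$-th paw corresponds to the $i$-th rhombus of $T$. Each $t_i$ is nonempty, since the block sorted in Step~4 contains the inversion recorded in Step~3. I expect this to show that $\mathrm{heap}(T')$ is obtained from $\mathrm{heap}(T)$ by \emph{inflating} the $i$-th rhombus into the heap of $t_i$ --- substituting a poset whose elements have exactly the same comparabilities to the outside as the rhombus they replace --- and then adjoining $\mathrm{heap}(X(w_{\mathrm{fin}}))$ as an order ideal. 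Each of these operations can only increase the number of linear extensions: for the inflation, fixing a linear extension of each inserted heap and splicing it in blockwise gives an injection of linear extensions (recover the extension of $\mathrm{heap}(T)$ by collapsing each block back to a point), and adjoining an order ideal multiplies $e$ by $e(\mathrm{heap}(X(w_{\mathrm{fin}})))\ge 1$.

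The hard part is precisely the structural claim in the previous paragraph: that {\sf MONO}'s rhombus-to-paw substitution is an honest inflation of heaps, creating no comparabilities among the paws beyond those already present among the corresponding rhombi of $T$. This is the one place where {\sf MONO} cannot be treated as a black box --- it requires re-examining the geometry in the proof of \cite[Theorem~5.10]{tenner rdpp} to confirm that the nested paws interact, in the heap, only through their shared boundaries. If one prefers to avoid heaps, the same conclusion should instead follow from a direct inverse to $\Phi$: reading the tiling of $\Phi({\sf s})$ from the right and greedily peeling rhombi, one recovers $t_0$, then $t_1$, and so on --- the nesting makes each successive paw the next order-ideal complement --- and from the block of positions sorted by $t_i$ one reads off the descent peeled from $p$ at stage $i$, hence the letters of ${\sf s}$ and ${\sf s}$ itself.
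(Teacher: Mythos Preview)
Your construction of $\Phi$ is exactly the paper's: run {\sf MONO} on the labeled tiling for ${\sf s}$, label the rhombi of each resulting paw consecutively (with fixed choices inside each paw and for $X(w_{\mathrm{fin}})$), and declare this an injection $R(p)\hookrightarrow R(w)$. The paper's proof is a single short paragraph that does not justify injectivity beyond this description; your detour through heap posets and the inequality $e(\mathrm{heap}(T'))\ge e(\mathrm{heap}(T))$, and your alternative greedy inverse peeling off the nested paws, are both more than the paper supplies --- the ``hard part'' you flag is precisely what the paper leaves implicit --- but the underlying approach is the same.
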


\begin{proof}
Suppose $p \prec w$ and fix a tiling $T \in T(p)$, with $T' \in T(w)$ a corresponding tiling from {\sf MONO}. This $T$ describes a commutation class of reduced words that arise from labeling $T$, and such a labeling depends on choices about the order in which commuting tiles are labeled via Elnitsky's procedure. Consider one such labeling of $T$. It induces a labeling on the tiles of $T'$ via {\sf MONO}, where any collection of rhombi in $T'$ that come from a single (or empty, as in {\sf Step~\ref{monostart}} of the algorithm) tile in $T$ are labeled consecutively via Elnitsky's procedure. This gives an injection $R(p) \hookrightarrow R(w)$, and thus $R(p) \le R(w)$.
\end{proof}

Figure~\ref{fig:R mono} gives an example of the injection described in the proof of Theorem~\ref{thm:R mono}.

\begin{figure}[htbp]
\begin{minipage}{1.5in}
$$\begin{tikzpicture}
\draw (0,0) coordinate (a) 
-- ++(-162:1) coordinate (b) 
-- ++(-126:1) coordinate (c) 
-- ++(-90:1) coordinate (d) 
-- ++(-54:1) coordinate (e) 
-- ++(-18:1) coordinate (f)
-- ++(90:1) coordinate (g)
-- ++(126:1) coordinate (h)
-- ++(18:1) coordinate (i)
-- ++(54:1) coordinate (j) -- (a);
\draw (d) -- ++(-18:1) coordinate (k) -- (f);
\draw (k) -- (h) -- (c);
\draw (b) -- ++(-18:1) coordinate (l) -- (h);
\draw (l) -- (j);
\draw ($.25*(l)+.25*(h)+.25*(i)+.25*(j)$) node {$1$};
\draw ($.25*(l)+.25*(a)+.25*(b)+.25*(j)$) node {$2$};
\draw ($.25*(f)+.25*(g)+.25*(h)+.25*(k)$) node {$3$};
\draw ($.25*(b)+.25*(c)+.25*(h)+.25*(l)$) node {$4$};
\draw ($.25*(c)+.25*(d)+.25*(h)+.25*(k)$) node {$5$};
\draw ($.25*(d)+.25*(e)+.25*(f)+.25*(k)$) node {$6$};
\end{tikzpicture}$$
\end{minipage}
\begin{minipage}{.5in}
$$\longrightarrow$$
\end{minipage}
\begin{minipage}{2.5in}
$$\begin{tikzpicture}
\draw[ultra thick] (0,0) coordinate (a) 
-- ++(-167:1) coordinate (b) 
-- ++(-141:1) coordinate (c) 
-- ++(-115:1) coordinate (d) 
-- ++(-89:1) coordinate (e) 
-- ++(-62:1) coordinate (f) 
-- ++(-36:1) coordinate (g) 
-- ++(-10:1) coordinate (h)
-- ++(91:1) coordinate (i)
-- ++(170:1) coordinate (j)
-- ++(118:1) coordinate (k)
-- ++(65:1) coordinate (l)
-- ++(13:1) coordinate (m)
-- ++(39:1) coordinate (n) -- (a);
\draw[ultra thick] (l) -- ++(39:1) coordinate (o) -- (n);
\draw[ultra thick] (o) -- (b);
\draw[ultra thick] (k) -- ++(-89:1) coordinate (p) -- (g);
\draw[ultra thick] (c) -- (l);
\draw[ultra thick] (e) -- (p);
\draw[dashed] (d) -- (k); \draw[dashed] (g) -- (j);
\draw ($.25*(l)+.25*(m)+.25*(n)+.25*(o)$) node {$1$};
\draw ($.25*(a)+.25*(b)+.25*(n)+.25*(o)$) node {$2$};
\draw ($.25*(g)+.25*(h)+.25*(i)+.25*(j)$) node {$3$};
\draw ($.25*(g)+.25*(j)+.25*(k)+.25*(p)$) node {$4$};
\draw ($.25*(b)+.25*(c)+.25*(l)+.25*(o)$) node {$5$};
\draw ($.25*(c)+.25*(d)+.25*(k)+.25*(l)$) node {$6$};
\draw ($.25*(d)+.25*(e)+.25*(k)+.25*(p)$) node {$7$};
\draw ($.25*(e)+.25*(f)+.25*(g)+.25*(p)$) node {$8$};
\end{tikzpicture}$$\end{minipage}
\caption{Demonstration of the injection $R(52143) \hookrightarrow R(6213574)$, as indicated by tile labels. Thick borders in $X(6213574)$ bound the images of the rhombi from the tiling of $X(52143)$, and dashed edges give a (in fact, the \emph{only} for each setting in this example) rhombic tiling of the induced paw.}\label{fig:R mono}
\end{figure}
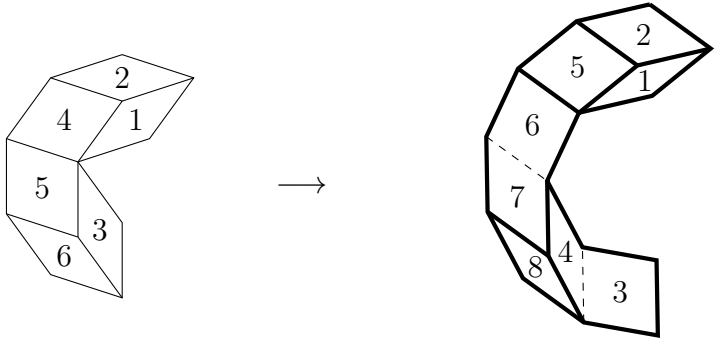

Having established that the numbers of both reduced words and of commutation classes are monotonically increasing with respect to pattern containment, it is natural to wonder when equality is maintained in either case, and if any criteria exist for that to occur. In fact, we can describe such conditions for each statistic, determining precisely which $p \prec w$ preserve the number of reduced words (respectively, commutation classes), and which do not. We prelude those results with a simple proposition.

\begin{prop}\label{prop:1 red wd}
$|R(w)| = 1$ if and only if, for fixed $\varepsilon \in \{\pm 1\}$ and some $m \le m'$,
$$w(i) =
\begin{cases}
i & \text{if } i \not\in [m,m'] \text{, and}\\
i+\varepsilon & \text{otherwise, taking values cyclically in $[m,m']$}.
\end{cases}$$
\end{prop}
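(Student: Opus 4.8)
The plan is to identify exactly which reduced words admit no further braid or commutation move, and then translate that condition on strings into the stated condition on one-line notation. The first step is a reduction to a statement about a single word: since $R(w)$ is connected under braid and commutation moves (Tits--Matsumoto; see \cite{bjorner brenti}), one has $|R(w)| = 1$ precisely when the unique reduced word ${\sf i_1\cdots i_\ell}$ of $w$ admits neither a commutation nor a braid move. Indeed, applying such a move would produce a reduced word that differs from ${\sf i_1\cdots i_\ell}$ as a string; conversely, if $|R(w)|\ge 2$ then a sequence of moves connects the given word to another one, so the first move of that sequence applies to the given word.

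The combinatorial heart is then the following claim: a reduced word admitting no commutation or braid move must be a strictly monotone run of consecutive integers, that is, of the form ${\sf a\,(a{+}1)\cdots b}$ or ${\sf b\,(b{-}1)\cdots a}$ for some $a\le b$ (the cases $\ell\le 1$ being trivial). To see this, note that adjacent letters cannot be equal, since $\sigma_i\sigma_i$ is not reduced, and cannot differ by at least $2$, since \eqref{eqn:commutation} would then give a commutation move; hence consecutive letters differ by exactly $1$. Moreover the sequence of successive increments, each $\pm 1$, cannot change sign, because a sign change produces a factor $i\,(i{\pm}1)\,i$ and hence a braid move via \eqref{eqn:braid}. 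So the word is strictly increasing or strictly decreasing in steps of $1$.

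It remains to set up the dictionary between such a word and its permutation. A direct computation shows that $\sigma_a\sigma_{a+1}\cdots\sigma_b$ is the cyclic permutation fixing every point outside $[a,b+1]$ and sending each $i\in[a,b+1]$ to $i+1$ taken cyclically within $[a,b+1]$, and that its length is $b+1-a$, so the word is reduced; this is exactly the permutation in the statement with $\varepsilon=+1$, $m=a$, $m'=b+1$. The reverse (decreasing) word gives the inverse cycle, the $\varepsilon=-1$ case, and the empty word gives the identity, the degenerate case $m=m'$. Conversely every permutation $w$ of the stated form visibly has a reduced word of this monotone shape. Combining the three steps: if $|R(w)|=1$ then its unique reduced word is move-free, hence monotone, hence $w$ has the stated form; and if $w$ has the stated form it has a move-free reduced word, which by connectedness is the only one. (In this last direction one can avoid connectedness by observing that the permutations in the statement have a single descent, and that right-multiplication by the associated $\sigma_j$ yields the analogous permutation on an interval shortened by one, so induction on $\ell(w)$ forces the reduced word.)

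I expect the only real obstacle to be bookkeeping: verifying the cycle-versus-one-line dictionary and matching up $\varepsilon$, $m$, and $m'$ correctly, together with a sentence of care for the degenerate cases $\ell\in\{0,1\}$, where the interval is trivial, or where both choices of $\varepsilon$ describe the same transposition. The monotonicity lemma itself is a two-line argument, and the reduction via connectedness of $R(w)$ is standard.
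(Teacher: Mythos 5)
Your proof is correct and follows essentially the same route as the paper: reduce to the observation that $|R(w)|=1$ exactly when the unique reduced word admits no commutation or braid move, characterize such words as monotone runs $\sf{m(m+1)\cdots(m'-1)}$ or $\sf{(m'-1)(m'-2)\cdots m}$, and translate these into the stated cyclic permutations. The paper states this in one sentence; you have simply supplied the details (connectivity of $R(w)$ under moves, the step-by-one monotonicity argument, and the dictionary with $\varepsilon$, $m$, $m'$), all of which check out.
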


\begin{proof}
The only reduced words that support no commutations or braids are words of the form
$${\sf m(m+1)(m+2)\cdots (m'-1)}$$
or
$${\sf (m'-1)(m'-2)(m'-3)\cdots m}$$
for some $m \le m'$.
\end{proof}

Note that if $m = m'$ in Proposition~\ref{prop:1 red wd}, then the reduced word is empty and the permutation $w$ is the identity.

We can now completely characterize the conditions on $p \prec w$ for which $|R(p)|$ is equal to $|R(w)|$. The theorem only addresses the case when these sets have more than one element, since Proposition~\ref{prop:1 red wd} has already described what must occur when they both have size $1$.

\begin{thm}\label{thm:equal R}
Suppose $p \prec w$. Then $|R(p)| = |R(w)| > 1$ if and only if $\ell(p) = \ell(w)$.
\end{thm}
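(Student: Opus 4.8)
The plan is to prove the two directions separately, using Theorem~\ref{thm:R mono} as the crucial tool for the harder implication. The forward direction is a soft argument: if $\ell(p) < \ell(w)$, I want to produce an honest gap between $|R(p)|$ and $|R(w)|$. The idea is to find a single pattern occurrence $\langle p \rangle$ in $w$ that fails to be consecutive in position or value (possible precisely because $\ell(p) < \ell(w)$ forces $w$ to have at least one inversion not "inside" the pattern), choose an intermediate permutation $w'$ with $p \preceq w' \preceq w$ and $\ell(p) < \ell(w') \le \ell(w)$, and observe that the injection $R(p) \hookrightarrow R(w')$ from Theorem~\ref{thm:R mono} cannot be surjective: there is a reduced word of $w'$ using a letter not available to any reduced word of $p$ (after the relevant shift), or more simply, any reduced word of $w'$ has length $\ell(w') > \ell(p)$ while those of $p$ have length $\ell(p)$, so they are not even the same strings — but that alone does not bound cardinality, so the real point is that the injection $R(p)\hookrightarrow R(w)$ supplied by {\sf MONO} in the proof of Theorem~\ref{thm:R mono} misses the reduced words of $w$ obtained by the nontrivial rhombic tilings $t_i$ of the induced paws, which exist precisely because $\ell(w) > \ell(p)$ forces at least one such paw to be nondegenerate. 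I would spell out that the image of that injection consists of reduced words of $w$ whose tiles split into blocks of consecutively-labeled rhombi coming from single tiles of $T(p)$, and exhibit one reduced word of $w$ violating this (e.g. one where a rhombus from one induced paw is labeled between two rhombi from another), giving $|R(p)| < |R(w)|$ strictly.

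For the reverse direction, suppose $\ell(p) = \ell(w)$. Since $p \prec w$ and $p, w$ have the same length, $w$ must be obtained from $p$ by padding with fixed points and/or a shift — more precisely, the occurrence $\langle p \rangle$ in $w$ must use every inversion of $w$, so $w$ restricted to the positions/values of $\langle p \rangle$ is order-isomorphic to $p$ and all other values of $w$ are fixed points lying outside the span. The cleanest way to finish is to show $\langle p \rangle$ occupies consecutive positions and uses consecutive values in $w$: if some value $x \notin \langle p \rangle$ satisfied $\langle 1 \rangle < x < \langle k \rangle$, then $x$ would be a fixed point of $w$ sitting (in value) strictly between two pattern entries, and since $\langle p \rangle$ contains an inversion straddling $x$'s value, $x$ would have to sit in a position creating or destroying an inversion — forcing $\ell(w) > \ell(p)$, a contradiction; the positional statement is symmetric. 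Hence some occurrence of $p$ in $w$ is consecutive in both senses, and then $w = w' \cdot (\text{shift of } p) $-type decomposition shows $R(w)$ is, up to the fixed shift, exactly $R(p)$, so $|R(p)| = |R(w)|$. Together with the hypothesis $|R(w)| > 1$, this gives the biconditional.

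The main obstacle I anticipate is making the strict inequality $|R(p)| < |R(w)|$ in the forward direction genuinely rigorous rather than merely plausible: Theorem~\ref{thm:R mono} only asserts $|R(p)| \le |R(w)|$, so I must go back into the {\sf MONO}-based construction and pin down a concrete reduced word of $w$ outside the image of the injection. I expect the right move is to track one non-degenerate induced paw $P$ (a $2r$-gon with $r \ge 2$ that is not a rhombus, which must exist since $\sum \ell(t_i) = \ell(w) - \ell(p) > 0$ and each contributing tile of $T(p)$ is a rhombus so at least one $t_i$ tiles a paw with $\ge 2$ rhombi) and produce two distinct rhombic tilings or two distinct labelings of its rhombi interleaved with rhombi from a neighboring block — this directly contradicts the "consecutive blocks" structure of the image and yields a reduced word of $w$ not hit by any reduced word of $p$. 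The bookkeeping of which tilings are realizable as Elnitsky labelings is where care is needed; everything else is routine once the consecutive-position-and-value normal form of the $\ell(p) = \ell(w)$ case is in hand.
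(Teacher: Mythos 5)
Both directions of your proposal contain genuine gaps. In the forward direction you set out to prove the strict statement ``$\ell(p)<\ell(w)$ implies $|R(p)|<|R(w)|$,'' but that statement is false: take $p=21\prec 231=w$, where $\ell(p)=1<2=\ell(w)$ yet $R(21)=\{{\sf 1}\}$ and $R(231)=\{{\sf 12}\}$, so $|R(p)|=|R(w)|=1$. Your sketch never uses the hypothesis $|R(w)|>1$, which is exactly what excludes such examples, and the specific mechanism you propose breaks down at the same point: $\sum_i\ell(t_i)=\ell(w)-\ell(p)>0$ only guarantees that some induced paw contains at least \emph{one} rhombus, not two, so there need be no non-degenerate paw, no second rhombic tiling, and no interleaved labeling to exhibit (in the example above every object in sight is rigid and the {\sf MONO} injection is onto). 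What the forward direction actually requires is: if $|R(p)|=|R(w)|$ and $\ell(p)<\ell(w)$, then $w$ must have a unique reduced word. The paper proves this by choosing an inversion $x>y$ of $w$ with $x\notin\langle p\rangle$ and its positions as close together as possible, straightening by right multiplication, and concluding that $w$ has the cyclic form of Proposition~\ref{prop:1 red wd}; the contradiction is then with $|R(w)|>1$, so that hypothesis does real work that your argument has no place for.

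In the reverse direction, your plan to show that some occurrence of $p$ is consecutive in both position and value is also false. For $p=2143\prec w=21354$ we have $\ell(p)=\ell(w)=2$, but the unique occurrence $2,1,5,4$ is split by the fixed point $3$ both positionally and in value; in particular no inversion of the occurrence ``straddles'' the value $3$, which is the unjustified step in your deduction, and the consecutive normal form you want simply does not exist. The conclusion $|R(p)|=|R(w)|$ is nevertheless true here ($R(2143)=\{{\sf 13},{\sf 31}\}$ and $R(21354)=\{{\sf 14},{\sf 41}\}$), and the correct route --- the paper's --- is a block decomposition rather than a single consecutive occurrence: since $\ell(p)=\ell(w)$ forces every inversion of $w$ to lie inside $\langle p\rangle$, all other letters are fixed points, $\langle p\rangle$ splits into segments $P_1P_2\cdots$ each occupying an interval of values (and of positions), every reduced word of $w$ uses letters only from the corresponding intervals, and shifting the letters block by block gives a bijection $R(w)\to R(p)$. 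Your final ``$w=w'\cdot(\text{shift of }p)$'' step is only available after this blockwise reduction, so as written the reverse direction does not go through either.
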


\begin{proof}
Suppose throughout the proof that $p \prec w$, and fix an occurrence $\langle p \rangle$ of $p$ in $w$

Suppose $\ell(p) = \ell(w)$. Then every inversion in $w$ is an inversion in this $\langle p \rangle$, and thus $w$ fixes all letters not in $\langle p \rangle$. Define $x_1 \le y_1 < x_2 \le y_2 < \cdots$ and $P_1, P_2, \ldots$ so that $\langle p \rangle = P_1P_2 \cdots$, where $P_i$ uses the letters $[x_i,y_i]$. Then elements of $R(w)$ are words on
$$\bigcup_i [{\sf x_i},{\sf y_i}),$$
and we have a bijection $R(w) \rightarrow R(p)$ defined letter-wise by
$${\sf j} \mapsto
\begin{cases}
 {\sf j - x_i + 1} & \text{if } x_i \le j < y_i.
\end{cases}$$
Thus $|R(p)| = |R(w)|$.

Now consider $|R(p)| = |R(w)| > 1$. Suppose $\ell(p) < \ell(w)$. Then there is an inversion with values $x>y$ such that, without loss of generality, the value $x$ is not part of $\langle p \rangle$. Choose the positions of this inversion to be as close together as possible, so that everything appearing between $x$ and $y$ in $w$ is both greater than $x$ and part of $\langle p \rangle$. That is,
$$w = A x B y C,$$
where $B \subseteq \langle p \rangle$ and $x < b$ for all $b \in B$.
Because $|R(p)| = |R(w)|$, if we multiply $w$ on the right by adjacent transpositions to remove inversions, then any resulting $w'$ in which $x$ is immediately to the left of $y$ can contain no other descents. In particular, this is true for 
$$w' = AxyBC.$$
Thus $Ax$ is an increasing sequence of values, as is $yBC$. Combining this with the relationship between $x$ and $B$ means that, in fact,
$$AxBC$$
is an increasing sequence of values. Thus $w$ has the form of one of the permutations described in Proposition~\ref{prop:1 red wd}, contradicting the fact that $|R(w)| > 1$. Therefore we must in fact have that $\ell(p) = \ell(w)$.
\end{proof}

Theorem~\ref{thm:equal R} says that any complexity to $w$ besides its containment of $p$ will introduce additional reduced words. To clarify the practicality of this result, we offer the following corollary.

\begin{cor}
Suppose $p \prec w$. Then $|R(p)| = |R(w)| > 1$ if and only if $w$ has a $p$-pattern that can be partitioned as $\langle p \rangle = P_1P_2 \cdots$, where all letters of $P_i$ are less than all letters of $P_{i+1}$ for all $i$, and all letters not in this $\langle p \rangle$ are fixed by $w$.
\end{cor}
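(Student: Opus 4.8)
The plan is to read this off from Theorem~\ref{thm:equal R} together with the explicit structure exhibited in that theorem's proof. Recall that the ``$\ell(p)=\ell(w)\Rightarrow$'' half of that proof does exactly the following: it fixes an occurrence $\langle p \rangle$, observes that $w$ must fix every letter not appearing in $\langle p \rangle$, and writes $\langle p \rangle = P_1 P_2 \cdots$ where $P_i$ uses precisely the letters of an interval $[x_i,y_i]$ with $x_1 \le y_1 < x_2 \le y_2 < \cdots$, so in particular all letters of $P_i$ lie below all letters of $P_{i+1}$. So for the forward implication of the corollary I would argue: if $|R(p)| = |R(w)| > 1$, then Theorem~\ref{thm:equal R} gives $\ell(p) = \ell(w)$, and the decomposition just recalled is precisely the partition of $\langle p \rangle$ asked for, with the claimed fixed-letter property. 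There is nothing to add here beyond citing that proof.

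For the converse I would begin with the hypothesised occurrence $\langle p \rangle = P_1 P_2 \cdots$, with the blocks value-increasing and every letter outside $\langle p \rangle$ fixed by $w$, and prove that $\ell(p) = \ell(w)$; the corollary then follows from the ``$\ell(p)=\ell(w)\Rightarrow$'' direction of Theorem~\ref{thm:equal R}, the hypothesis $|R(w)|>1$ being carried along. The idea is that these hypotheses force $w$ to be block diagonal: the letters used by the $P_i$ occupy separated intervals, every remaining letter is a fixed point of $w$, and therefore $w$ restricts to a permutation of each such interval and to the identity everywhere else. Hence $\ell(w)$ is the sum of the lengths of these block permutations, which are the lengths of the $P_i$; and $\ell(p)=\ell(\langle p \rangle)$ equals the same sum, since two pattern entries lying in different blocks $P_a$, $P_b$ with $a<b$ never form an inversion -- the one occurring earlier in position carries the smaller value. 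So $\ell(p)=\ell(w)$, and we are done.

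The step I expect to take the most care is the confinement of every inversion of $w$ to a single block $P_i$. To make it airtight one has to rule out an inversion pairing a pattern entry with a fixed letter, or pairing pattern entries from two different blocks; this is exactly the point at which one uses that each $P_i$ occupies a full interval of values (so that no fixed letter can lie inside a block's value range and create an extra inversion) rather than merely that the blocks are value-increasing. Everything past that is bookkeeping, and the argument concludes by quoting Theorem~\ref{thm:equal R}.
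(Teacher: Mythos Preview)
Your approach is precisely the paper's: the corollary is stated without a separate proof, immediately after Theorem~\ref{thm:equal R}, as a practical reformulation of that theorem using the block decomposition $\langle p\rangle = P_1P_2\cdots$ (with $P_i$ using the interval $[x_i,y_i]$) that was built explicitly in the theorem's proof. Both directions of your argument simply translate between $\ell(p)=\ell(w)$ and this structural description, exactly as intended.

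You are right to flag the interval issue in the converse. The corollary, read literally, only asks that the blocks be value-increasing and that non-pattern letters be fixed; it does not say that each $P_i$ uses a full interval $[x_i,y_i]$. Without that, the converse fails: take $w=3214$, $p=21$, and the occurrence $\langle p\rangle = 31$; the letters $2$ and $4$ outside this occurrence are fixed, the trivial one-block partition vacuously satisfies the ordering condition, yet $\ell(w)=3>1=\ell(p)$ and $|R(w)|=2\ne 1=|R(p)|$. So the full-interval property is not something you can deduce from the stated hypotheses --- it must be read into the corollary from the proof of Theorem~\ref{thm:equal R}, where the $P_i$ are defined to use consecutive values $[x_i,y_i]$. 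With that reading your argument goes through verbatim; you have correctly located the one place where the literal statement is looser than what is actually needed.
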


Just as reduced word enumeration was shown to be monotonic in Theorem~\ref{thm:R mono} and further refined in Theorem~\ref{thm:equal R}, we can also refine the commutation class monotonicity from Proposition~\ref{prop:C mono}. Whereas Theorem~\ref{thm:R mono} showed that enumeration of $21$-patterns is the crux to equality of $|R(p)|$ and $|R(w)|$, the analogous result for $|C(p)|$ and $|C(w)|$ will rely on enumeration of $321$-patterns.

\begin{thm}\label{thm:equal C}
Suppose $p \prec w$. Then $|C(p)| = |C(w)|$ if and only if the permutations $p$ and $w$ contain the same number of $321$-patterns.
\end{thm}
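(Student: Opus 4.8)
The plan is to prove both directions by analyzing how the injection $C(p) \hookrightarrow C(w)$ from {\sf MONO} (Proposition~\ref{prop:C mono}) behaves, and to pin the obstruction to surjectivity onto the presence of ``extra'' $321$-patterns. The key structural fact I would exploit is the well-known correspondence (due to Elnitsky, implicit in Proposition~\ref{prop:elnitsky}) between commutation classes and tilings, together with the observation that $|C(w)|$ detects $321$-patterns: indeed $|C(w)| = 1$ if and only if $w$ is $321$-avoiding (equivalently, fully commutative, by Billey--Jockusch--Stanley), and more generally the ``hexagon graph'' $G(w)$ has an edge — hence $C(w)$ has two classes related by a braid — precisely when a reduced word of $w$ admits a braid move, which by \cite{bjs} happens exactly when $w$ contains a $321$-pattern. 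So the coarsest invariant here, whether $|C(\cdot)|>1$, is already governed by $321$-containment; the theorem refines this to exact counts.

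For the forward direction, suppose $p \prec w$ but $w$ has strictly more $321$-patterns than $p$. I would locate a ``witness'' $321$-pattern in $w$ not accounted for by the occurrence $\langle p \rangle$: concretely, pick a $321$-occurrence $a>b>c$ in positions $r<s<t$ of $w$ at least one of whose three entries lies outside $\langle p \rangle$, chosen so that the span is as small as possible (so intervening entries are forced into a controlled configuration, much as in the proof of Theorem~\ref{thm:equal R}). The existence of such a witness should produce, via the manipulative techniques of Theorem~\ref{thm:patt redwd} and the analysis of {\sf MONO}, a tiling $T' \in T(w)$ — equivalently a commutation class — that is \emph{not} in the image of the injection from $T(p)$, because running {\sf MONO} never creates a braid-able hexagon that the original tiling of $X(p)$ did not already witness. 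Thus $|C(p)| < |C(w)|$, the contrapositive of one implication.

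For the reverse direction, suppose $p$ and $w$ contain the same number of $321$-patterns. I would argue that {\sf MONO} is then a bijection by showing it is surjective: given any $T' \in T(w)$, I reconstruct a preimage $T \in T(p)$. The point is that each braid-able hexagon in a tiling of $X(w)$ corresponds to a $321$-pattern in $w$; since the counts agree, every $321$-pattern of $w$ is ``carried by'' the copy of $\langle p \rangle$, and the entries of $w$ outside $\langle p \rangle$ — while they may be involved in inversions — participate in no $321$-pattern, so locally around them the tiling $X(w)$ is forced (its relevant subpolygon is a parallelogram-type region with a unique tiling, by the $|R|=1$ criterion of Proposition~\ref{prop:1 red wd}). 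Stripping these forced regions off recovers a tiling of $X(p)$ mapping to $T'$. Combined with injectivity this gives $|C(p)| = |C(w)|$.

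The main obstacle I anticipate is the forward direction: turning ``$w$ has an extra $321$-pattern'' into an honest extra commutation class requires care, because a single $321$-pattern of $w$ outside $\langle p\rangle$ does not immediately yield a braid move that is ``new'' relative to $T(p)$ — one must verify the corresponding hexagon is genuinely unflippable within the $\langle p\rangle$ part, i.e.\ that the count of $321$-patterns, not just their existence, is the right bookkeeping device. I would handle this by setting up a refined injection that is tile-by-tile compatible with braid moves, so that $|C(w)| - |C(p)|$ is expressed as a sum of positive contributions indexed by the $321$-patterns of $w$ not supported on $\langle p \rangle$; checking that this indexing is well-defined (no double counting, every excess $321$ contributes) is the technical heart of the argument.
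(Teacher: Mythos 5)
Your reverse direction is essentially the paper's argument in spirit: since the $321$-pattern counts agree, every $321$-pattern of $w$ lies inside $\langle p\rangle$, so each region that {\sf MONO} strips off is a paw of a $321$-avoiding permutation and is uniquely tileable, making {\sf MONO} a well-defined surjection. But note your cited justification is the wrong tool: Proposition~\ref{prop:1 red wd} characterizes $|R(w)|=1$, whereas the fact you need (and correctly state at the outset) is that $321$-avoiding permutations have a \emph{unique commutation class}, i.e.\ a unique rhombic tiling; the stripped regions are $321$-avoiding paws, not ``parallelogram-type'' regions, and they typically have many reduced words. This is a repairable slip, not a structural problem.

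The genuine gap is the forward direction. Your mechanism rests on the claim that ``running {\sf MONO} never creates a braid-able hexagon that the original tiling of $X(p)$ did not already witness,'' and that claim is false: take $p=21\prec w=321$. The unique tiling of $X(21)$ is a single rhombus, yet its image under {\sf MONO} is a three-rhombus tiling of the hexagon $X(321)$, which is in its entirety a flippable hexagon witnessed nowhere in $X(21)$. So you cannot certify that a tiling lies outside the image by pointing to a ``new'' flippable hexagon coming from a witness $321$-pattern. Your fallback --- a refined injection exhibiting $|C(w)|-|C(p)|$ as a sum of positive contributions indexed by the $321$-patterns of $w$ not supported on $\langle p\rangle$ --- is a strictly stronger assertion than the theorem, and the part that actually needs proof (well-definedness of the indexing, no double counting, every excess pattern contributing) is exactly what you defer. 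The paper's proof of this direction is different and avoids constructing a non-image tiling: assuming $|C(p)|=|C(w)|$, so that {\sf MONO} is forced to be a single-valued bijection, it takes an arbitrary $321$-pattern $x>y>z$ of $w$, invokes Corollary~\ref{cor:labeled paws} to obtain two tilings of $X(w)$ identical except on an $\{x,y,z\}$ sub-hexagon, and then analyzes the individual steps $w_i\mapsto w_{i+1}$ of {\sf MONO} to show that producing these tilings unambiguously forces $x=\langle p_i(j)\rangle$, $y=\langle p_i(j+1)\rangle$, and (via the other hexagon tiling) $z\in\langle p\rangle$; hence every $321$-pattern of $w$ lies in $\langle p\rangle$ and the counts coincide. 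As written, your forward direction is a plan with a false key step rather than a proof.
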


\begin{proof}
Suppose throughout the proof that $p \prec w$, and fix an occurrence $\langle p \rangle$ of $p$ in $w$.

Suppose that $p$ and $w$ contain the same number of $321$-patterns, so every $321$-pattern in $w$ is part of this $\langle p \rangle$. We want to show that the {\sf MONO} algorithm is both surjective and a function. Consider some $T \in T(p)$. Because $321$-patterns in $w$ occur within $\langle p \rangle$, each $X(q)$-paw that {\sf MONO} produces from a rhombus in $T$ has $q$ avoiding $321$. Thus these paws each have exactly one rhombic tiling. Similarly, in {\sf Step~\ref{monostart}} of {\sf MONO}, the permutation $w_{\ell(p)}$ must also be $321$-avoiding, and thus $|T(w_{\ell(p)})| = 1$. Therefore {\sf MONO} is a function; that is, it maps $T \in T(p)$ to a single, well-defined $T' \in T(w)$. Now consider $U' \in T(w)$. If $p = e$ then $w$ is $321$-avoiding and has only one commutation class, so certainly $|C(p)| = |C(w)|$. Otherwise, there is an $i$ such that $\langle p(i)\rangle > \langle p(i+1) \rangle$. Moreover, because $w$ has no more $321$-patterns than $p$ has, the segment from $\langle p(i)\rangle$ to $\langle p(i+1) \rangle$ in the one-line notation of $w$ is $321$-avoiding. Thus the paw created by this segment has a unique rhombic tiling, which must be what we see in $U'$. Let $w'$ be the permutation obtained by rewriting this segment in increasing order, so $X(w')$ is $X(w)$ without that paw, and let $p' = p\sigma_i$, so $X(p')$ is what remains after positioning a rhombus with edges $\{p(i),p(i+1)\}$ along the rightmost border of $X(p)$. Iterating this procedure with $p'$ and $w'$ yields a unique tiling $U \in T(p)$ such that {\sf MONO} produces $U'$ from $U$. Thus {\sf MONO} is surjective, and so we do indeed have $|C(p)| = |C(w)|$.

Now consider $|C(p)| = |C(w)|$. Suppose that $x > y > z$ is a $321$-pattern in $w$. By Corollary~\ref{cor:isolated paw patterns}, there are elements of $T(w)$ that are identical except for the tiling of a sub-hexagon with edges $\{x,y,z\}$, as depicted in Figure~\ref{fig:sub-hexagons}. 
\begin{figure}[htbp]
\begin{tikzpicture}
\draw (-2.5,1) node {$T' =$};
\draw (4,1) node {$\in T(w)$};
\draw (2,-1.5) to[out=-135,in=0] (1,-2) to[out=180,in=-90] (-2,1) to[out=90,in=180] (1,4) to[out=0,in=135] (2,3.5);
\draw[decorate,decoration={snake}] (2,-1.5) to[out=45,in=-90] (3,1) to[out=90,in=-45] (2,3.5);
\draw (0,0) coordinate (a) -- ++(30:1) coordinate (b) -- ++(90:1) coordinate (c) -- ++(150:1) coordinate (d) -- ++(210:1) coordinate (e) -- ++(270:1) coordinate (f) -- (a);
\draw (d) -- ++(-90:1) coordinate (g) -- (b);
\draw (g) -- (f);
\draw ($(a)!0.5!(b)$) node[below right] {$z$};
\draw ($(c)!0.5!(b)$) node[right] {$y$};
\draw ($(c)!0.5!(d)$) node[above right] {$x$};
\draw ($(e)!0.5!(d)$) node[above left] {$z$};
\draw ($(e)!0.5!(f)$) node[left] {$y$};
\draw ($(a)!0.5!(f)$) node[below left] {$x$};
\end{tikzpicture}
\hspace{.2in}
\begin{tikzpicture}
\draw (-2.5,1) node {$U' =$};
\draw (4,1) node {$\in T(w)$};
\draw (2,-1.5) to[out=-135,in=0] (1,-2) to[out=180,in=-90] (-2,1) to[out=90,in=180] (1,4) to[out=0,in=135] (2,3.5);
\draw[decorate,decoration={snake}] (2,-1.5) to[out=45,in=-90] (3,1) to[out=90,in=-45] (2,3.5);
\draw (0,0) coordinate (a) -- ++(30:1) coordinate (b) -- ++(90:1) coordinate (c) -- ++(150:1) coordinate (d) -- ++(210:1) coordinate (e) -- ++(270:1) coordinate (f) -- (a);
\draw (a) -- ++(90:1) coordinate (g) -- (c);
\draw (g) -- (e);
\draw ($(a)!0.5!(b)$) node[below right] {$z$};
\draw ($(c)!0.5!(b)$) node[right] {$y$};
\draw ($(c)!0.5!(d)$) node[above right] {$x$};
\draw ($(e)!0.5!(d)$) node[above left] {$z$};
\draw ($(e)!0.5!(f)$) node[left] {$y$};
\draw ($(a)!0.5!(f)$) node[below left] {$x$};
\end{tikzpicture}
\caption{Two tilings of the $\{x,y,z\}$ sub-hexagon that must appear in element of $T(w)$, as described in the proof of Theorem~\ref{thm:equal C}.}\label{fig:sub-hexagons}
\end{figure}
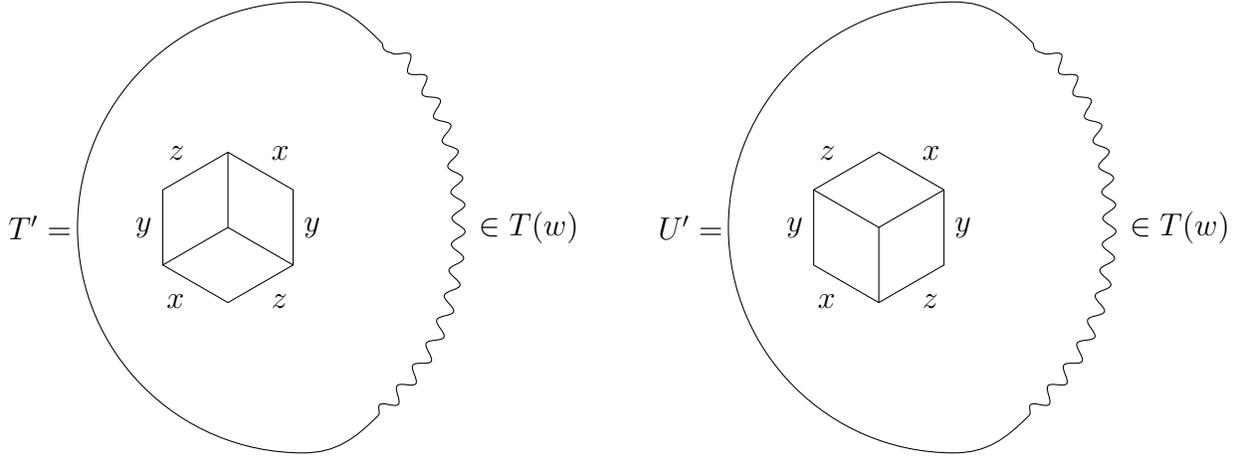
Consider $T' \in T(w)$. For some $T \in T(p)$ to produce this particular $\{x,y,z\}$ sub-hexagon unambiguously via {\sf MONO}, there is an $i$ such that, as defined by the algorithm,
$$w_i = \cdots \ \langle p_i(j)\rangle \ \cdots \ x \ \cdots \ y \ \cdots \ \langle p_i(j+1)\rangle \ z \ \cdots,$$
where $p_i(j) > p_j(j+1)$, and
$$w_{i+1} = \cdots \ y \ x \ z \ \cdots.$$
To obtain this $w_{i+1}$, we must have that $x$ and $y$ are the two largest elements in the segment $\langle p_i(j)\rangle \cdots x \cdots y \cdots \langle p_i(j+1)\rangle$ of $w_i$. For {\sf MONO} to be a function, this segment must avoid $321$. Therefore $y \not> \langle p_i(j+1)\rangle$. Moreover, because $x$ and $y$ are maximal in this segment, we have $y \not< \langle p_i(j+1)\rangle$. Thus $y = \langle p_i(j+1)\rangle$. Because $\langle p_i(j) \rangle > p_j(j+1) = y$, and $x$ is the largest value in the segment, we must in fact have $x = \langle p_i(j) \rangle$. Therefore $x$ and $y$ are both values in $\langle p \rangle$. A similar argument for the hexagon tiling in $U'$ shows that $z$ must be part of $\langle p \rangle$ as well. Therefore every $321$-pattern in $w$ is also a $321$-pattern in $\langle p \rangle$, and so $p$ and $w$ have equally many $321$-patterns.
\end{proof}

Both Theorems~\ref{thm:equal R} and~\ref{thm:equal C} can be stated in terms of patterns in $p$ and $w$: Theorem~\ref{thm:equal R} requires equally many $21$-patterns, and Theorem~\ref{thm:equal C} requires equally many $321$-patterns. This suggests that patterns may play an even deeper, ``meta,'' role in structural aspects of the symmetric group. Additionally, note that this equinumerable requirement is somewhat orthogonal to typical forays into pattern containment and avoidance, where one only cares about \emph{whether} a pattern occurs, and not \emph{how often} it does so.

\subsection{Enumeration of $132$-avoiding permutations by length}\

Permutation patterns appear throughout combinatorics, typically in theorem statements of two varieties: characterization results (such as Corollary~\ref{cor:rdpp main}, because ``vexillary'' is equivalent to $2143$-avoiding) and enumerative results. This latter class has received several decades of consistent attention (see, for example, \cite{atkinson, bona, knuth, simion schmidt, stankova}).

\begin{defn}\label{defn:pattern avoiding set}
Fix a permutation $p$. Let
$$\mathfrak{S}_n(p) = \{w \in \mathfrak{S}_n : w \text{ avoids } p\}.$$
\end{defn}

There is great interest in understanding $|\mathfrak{S}_n(p)|$ for different $p$. For example, it is well-known, and has been proved in many contexts, that
\begin{equation}\label{eqn:s3 avoidance}
p \in \mathfrak{S}_3 \ \Longrightarrow \ |\mathfrak{S}_n(p)| = C_n,
\end{equation}
where
$$C_n = \frac{1}{n+1}\binom{2n}{n}$$
is the $n$th Catalan number. For more information about Catalan numbers, including their enumeration of $\mathfrak{S}_3$ pattern avoidance, the reader is encouraged to read \cite{stanley-catalan} and entry A000108 of \cite{oeis}.

This section will show that partitions and a collection of pattern avoiding permutations are in bijection with each other, and thus are equinumerous. Parts of size $0$ are typically suppressed in partition enumeration -- otherwise there would be infinitely many partitions of a given size. Similarly, it will be useful in the upcoming enumeration of pattern avoidance by length to suppress a certain kind of fixed point in our enumeration of pattern avoidance.

To motivate our convention, consider permutations $v \in \mathfrak{S}_n$ and $v' \in \mathfrak{S}_{n+1}$ defined by
$$v'(x) = \begin{cases}
v(x) & \text{ if } x \le n, \text{ and}\\
n+1 & \text{ if } x = n+1.
\end{cases}$$
If $p \in \mathfrak{S}_k$ with $p(k) \neq k$, then $v \in \mathfrak{S}_n(p)$ if and only if $v' \in \mathfrak{S}_{n+1}(p)$. Moreover, $\ell(v) = \ell(v')$ and $R(v) = R(v')$. In other words, in the context of enumerating $p$-avoidance by length, the permutations $v$ and $v'$ are essentially the same. When $p(1) \neq 1$, an similar statement can be made about $v \in \mathfrak{S}_n$ and $v' \in \mathfrak{S}_{n+1}$ defined by
$$v'(x) = \begin{cases}
1 & \text{ if } x = 1, \text{ and}\\
v(x-1) & \text{ if } x > 1.
\end{cases}$$
Thus, to enumerate pattern avoidance by length, we define a set that sidesteps this overcounting.

\begin{defn}\label{defn:suppression}
For $p \in \mathfrak{S}_k$, set
$$\mathfrak{S}(p) = \bigcup\limits_n\left\{w \in \mathfrak{S}_n : \begin{array}{l}
w \text{ avoids } p,\\
w(n) \neq n \text{ if } p(k) \neq k, \text{ and}\\
w(1) \neq 1 \text{ if } p(1) \neq 1
\end{array}\right\}.$$
\end{defn}

In the applications of this section, we focus on $132$-avoidance. However, other patterns are similarly susceptible to our techniques and results, and we will conclude the paper with a brief indication of how that might proceed.

The pattern $132 \in \mathfrak{S}_3$ appears in both an enumerative result (as described in~\eqref{eqn:s3 avoidance}) and a characterization result (entry P0003 of \cite{dppa}).

\begin{thm}[{\cite{macdonald, manivel}}]
A permutation is $132$-avoiding if and only if it is dominant.
\end{thm}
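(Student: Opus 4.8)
The statement is a classical one, so a reasonable self-contained proof should go through the Lehmer code. Recall that a permutation $w\in\mathfrak{S}_n$ is \emph{dominant} exactly when its code $c(w)=(c_1,\ldots,c_n)$, with $c_i=\#\{j>i:w(j)<w(i)\}$, is weakly decreasing (equivalently, its Rothe diagram is the Young diagram of a partition). So the theorem is equivalent to the assertion that $c(w)$ is weakly decreasing if and only if $w$ avoids $132$; taking contrapositives on both sides, it suffices to prove that $c(w)$ has an ascent $c_i<c_{i+1}$ \emph{if and only if} $w$ contains a $132$-pattern. I would prove the two implications separately.

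For ``ascent $\Rightarrow$ contains $132$'': suppose $c_i<c_{i+1}$. First observe $w(i)<w(i+1)$, since if $w(i)>w(i+1)$ then position $i+1$ together with every $j>i+1$ contributing to $c_{i+1}$ also contributes to $c_i$, giving $c_i\ge c_{i+1}+1$. Hence position $i+1$ is not counted in $c_i$, so $c_i=\#\{j>i+1:w(j)<w(i)\}$, and comparing this with $c_{i+1}=\#\{j>i+1:w(j)<w(i+1)\}$ (a set that contains the former, since $w(i)<w(i+1)$) forces some $j>i+1$ with $w(i)<w(j)<w(i+1)$; then $i<i+1<j$ carries a $132$-pattern. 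For ``contains $132$ $\Rightarrow$ ascent'': among all occurrences $a<b<c$ with $w(a)<w(c)<w(b)$, choose one minimizing $b-a$. I claim $b=a+1$. Indeed, for any position $e$ with $a<e<b$, either $w(e)<w(c)$, in which case $(e,b,c)$ is a $132$-occurrence with smaller gap, or $w(e)>w(c)$, in which case $(a,e,c)$ is a $132$-occurrence with smaller gap — either way contradicting minimality. With $b=a+1$ we have $w(a)<w(a+1)$ and some $c>a+1$ with $w(a)<w(c)<w(a+1)$; running the code comparison of the previous paragraph in reverse then gives $c_a<c_{a+1}$, an ascent.

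The main obstacle is exactly the localization step in the second implication — showing that a $132$-pattern can always be pushed down to one in positions $a,a+1,c$ — and I would want to state the gap-minimization case analysis cleanly so it does not balloon. An alternative route, avoiding this, is an induction on $n$: writing $k=w^{-1}(n)$, one shows $w$ avoids $132$ iff every entry left of position $k$ exceeds every entry right of it (forcing values $\{n-k+1,\ldots,n-1\}$ on the left block and $\{1,\ldots,n-k\}$ on the right) and both blocks avoid $132$; since in that situation $c(w)=(c^{L}_1+(n-k),\ldots,c^{L}_{k-1}+(n-k),\,n-k,\,c^{R}_1,\ldots,c^{R}_{n-k})$ where $c^{L},c^{R}$ are the block codes, and the junctions $c^{L}_{k-1}+(n-k)\ge n-k\ge c^{R}_1$ are automatic, the code of $w$ is weakly decreasing iff both block codes are, and the inductive hypothesis closes the argument. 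Either way, once the structural fact is pinned down the remaining verifications are routine.
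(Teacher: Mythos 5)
The paper does not prove this statement at all: it is imported as a known theorem, cited to Macdonald and Manivel, and the only related remark in the text is that a permutation is dominant if and only if its Lehmer code is a partition (citing Stanley's \emph{Enumerative Combinatorics}). So there is nothing internal to compare against, and your task is to supply a self-contained proof; your main argument does this correctly. Taking ``dominant'' to mean that the code $c(w)$ is weakly decreasing (the same characterization the paper invokes), your two implications are sound: the computation showing that an ascent $c_i<c_{i+1}$ forces $w(i)<w(i+1)$ and then some $j>i+1$ with $w(i)<w(j)<w(i+1)$ is exactly right, and the localization step --- minimizing $b-a$ over occurrences $a<b<c$ and splitting on $w(e)\lessgtr w(c)$ for $a<e<b$ to force $b=a+1$ --- is complete and handles the only delicate point; reversing the code comparison then yields the ascent $c_a<c_{a+1}$ because position $c$ lies in $\{j>a+1:w(j)<w(a+1)\}$ but not in $\{j>a+1:w(j)<w(a)\}$. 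One caution about your alternative inductive route via $k=w^{-1}(n)$: as sketched it proves ``avoids $132$ $\Rightarrow$ code weakly decreasing,'' but for the converse you would still need to show that a weakly decreasing code forces the block structure (every entry left of position $k$ exceeding every entry right of it), e.g.\ by arguing the contrapositive, which is essentially the first implication of your main proof; so the induction does not actually avoid that work. Since your primary Lehmer-code argument stands on its own, this is only a remark about the alternative, not a gap in the proposal.
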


For information about the significance of dominant permutations, the reader is referred to \cite{macdonald, manivel}. In this paper we will show an enumerative connection between dominant permutations and partitions. In fact this is not the only relationship between these two objects, since a permutation is dominant if and only if its Lehmer code is non-increasing (see, for example, \cite{albert atkinson ruskuc} and \cite{ec1}).

To facilitate our enumeration below, we highlight a crucial aspect of $132$-avoidance that arises from Theorem~\ref{thm:patt redwd}.

\begin{remark}\label{rem:132 as a pattern} 
The permutation $132$ has one reduced word: $R(132) = \{\sf{2}\}$. Moreover, because $132^+ = \{132\}$, Definition~\ref{defn:isolation} and Theorem~\ref{thm:patt redwd} implies that $w$ contains a $132$-pattern if and only if $w$ has a reduced word $\sf{axc}$ for some letter $x$, such that $\sf{a} \in R(u)$ and $\sf{c} \in R(v)$, where $\ell(us_{x-1}) > \ell(u)$ and $\ell(s_{x-1}v) > \ell(v)$.
\end{remark}

We now use the techniques and results of this paper to analyze $132$-avoiding permutations of a given length, and we show -- bijectively -- that these are enumerated by the partition numbers \cite[A000041]{oeis}. The theory of integer partitions is a highly active area of combinatorial research, treated, for example, in the texts \cite{andrews, andrews eriksson} and in a spectacular number of papers.

\begin{defn}
Let $\mathfrak{p}(n) = \big|\{\lambda \vdash n\}\big|$.
\end{defn}

For example, $\mathfrak{p}(4) = 5$ because there are five partitions of $4$. Drawn as Young diagrams anchored at the upper-left corners, these are as follows.

\begin{center}
\hspace{.1in} \tableau{\ &\ &\ &\ } \hspace{.1in} \tableau{\ &\ &\ \\ \ } \hspace{.1in} \tableau{\ &\ \\ \ &\ } \hspace{.1in} \tableau{\ &\ \\ \ \\ \ } \hspace{.1in} \tableau{\ \\ \ \\ \ \\ \ }
\end{center}

In order to study pattern avoidance by length, we consider the following sets, the collection of which partition $\mathfrak{S}(p)$.

\begin{defn}
Fix a permutation $p$ and let
$$\av(p;\ell) = \mathfrak{S}(p) \cap \{w : \ell(w) = \ell\}.$$
\end{defn}

\begin{ex}\
\begin{enumerate}\renewcommand{\labelenumi}{(\alph{enumi})}
\item Some of these sets are finite: $\av(132;4) = \{23451, 51234, 4213, 3241, 3412\}$.
\item Others are infinite:
$$\av(321;3) = \{2341, 4123, 3142, 2413, 23154,21453,31254,21534, \ldots\}.$$
\end{enumerate}
\end{ex}

We conclude this paper with an application of Theorem~\ref{thm:patt redwd}. Theorem~\ref{thm:132 by length} below gives a direct connection between partitions and $132$-avoiding permutations, and recovers a result (proved by other means) of Claesson, Jel\'inek, and Steingr\'imsson \cite[Proposition 11]{claesson jelinek steingrimsson}. Next, we refine Theorem~\ref{thm:132 by length} to give a bijective relationship between restricted partitions and $132$-avoiding permutations with prescribed first letters. Finally, we use this to give a connection to Catalan numbers in Corollary~\ref{cor:refining catalan}, writing those values as sums of quantities of $132$-avoiding permutations.

In light of the multitude and breadth of open problems in the study of pattern avoidance enumeration, it seems highly likely that our techniques can provide new insight into this high-interest field.

\begin{thm}\label{thm:132 by length}
For any integer $\ell \ge 0$,
$$|\av(132;\ell)| = \mathfrak{p}(\ell).$$
\end{thm}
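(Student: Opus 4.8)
The plan is to construct an explicit bijection between $\av(132;\ell)$ and the set of partitions of $\ell$, using the characterization of $132$-avoiding permutations as dominant permutations together with the Lehmer code description mentioned in the excerpt. Recall that a permutation $w$ is dominant if and only if its Lehmer code $c(w) = (c_1, c_2, \ldots)$, where $c_i = |\{j > i : w(j) < w(i)\}|$, is a weakly decreasing sequence, i.e. a partition. Since $\ell(w) = \sum_i c_i = |c(w)|$, this already gives a length-preserving correspondence between dominant permutations and partitions. The only issue is the difference between $\mathfrak{S}(p)$ and a naive union of the $\mathfrak{S}_n(132)$: the set $\av(132;\ell)$ is built from $\mathfrak{S}(132)$, which for $p = 132$ (where $p(1) = 1$ but $p(3) = 2 \neq 3$) forbids permutations fixing their last position. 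So first I would verify that the map $w \mapsto c(w)$, restricted to $132$-avoiding permutations $w$ with $w(n) \neq n$, lands exactly on partitions with no part equal to $0$ in the last nonzero slot — more precisely, the condition $w(n) \neq n$ translates into the Lehmer code having its support be an initial segment $\{1, \ldots, m\}$ with $c_m > 0$, which is exactly a partition written without trailing zeros. This is the standard normalization, so every partition of $\ell$ arises from a unique such $w$.

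Concretely, the steps I would carry out are: (1) cite the theorem in the excerpt that $132$-avoiding $=$ dominant, and recall (or quickly reprove via Remark~\ref{rem:132 as a pattern} if a self-contained argument is wanted) that dominant is equivalent to the Lehmer code being weakly decreasing; (2) observe $\ell(w) = |c(w)|$ since the Lehmer code entries sum to the inversion number; (3) show the normalization $w(n) \neq n$ corresponds to deleting trailing zeros from the code, so that $w \leftrightarrow \lambda = c(w)$ is a bijection from $\av(132;\ell)$ to the set of all partitions of $\ell$; (4) check the edge case $\ell = 0$, where $\av(132;0)$ should be $\{e\}$ (or, under the $\mathfrak{S}(p)$ convention, a single representative) and $\mathfrak{p}(0) = 1$. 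Alternatively — and this may be the route the paper takes, given the emphasis on reduced-word manipulation — one can avoid invoking the Lehmer code entirely and instead build the bijection directly from reduced words: by Remark~\ref{rem:132 as a pattern}, a permutation avoids $132$ iff no reduced word has a letter $x$ isolated from $\{x-1,x\}$, which forces every reduced word to be, up to commutation, a concatenation of ``staircase blocks'' of the form ${\sf a (a+1) (a+2) \cdots b}$; reading off the lengths and positions of these blocks recovers a partition, and Theorem~\ref{thm:patt redwd} (or rather its manipulation techniques) is what guarantees this block structure is forced.

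The main obstacle I anticipate is not the existence of the correspondence — both the Lehmer-code route and the reduced-word route are fairly direct — but rather pinning down the bookkeeping around the $\mathfrak{S}(p)$ convention so that the count is \emph{exactly} $\mathfrak{p}(\ell)$ and not $\mathfrak{p}(\ell)$ shifted by some boundary term. Specifically, one must be careful that suppressing the fixed point $w(n) = n$ (required because $132(3) = 3$ fails) removes precisely the ``padding by trailing zeros'' redundancy and nothing more, and that one does \emph{not} also need to suppress a fixed point at position $1$ (since $132(1) = 1$, so that clause of Definition~\ref{defn:suppression} is vacuous here). Getting this alignment right — perhaps by explicitly describing, for a given partition $\lambda = (\lambda_1 \geq \cdots \geq \lambda_m > 0)$, the unique dominant permutation in $\mathfrak{S}_{m+1}$ (or the relevant minimal $\mathfrak{S}_n$) with that code — is where the care is needed; the underlying combinatorics is classical.
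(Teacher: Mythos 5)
Your primary (Lehmer-code) route is sound, but it is genuinely different from the paper's argument. The paper does take your ``alternative'' route: it defines the antidiagonal filling of a partition $\lambda$, shows its reading word is reduced (so it defines a permutation $\pi(\lambda)$ with $\ell(\pi(\lambda)) = |\lambda|$), proves $\lambda \mapsto \pi(\lambda)$ is injective, and then uses the isolation criterion of Remark~\ref{rem:132 as a pattern} (a consequence of Theorem~\ref{thm:patt redwd}) to show that the lexicographically greatest reduced word of any $w \in \mathfrak{S}(132)$ is forced to be exactly such a reading word, so $\{\pi(\lambda)\} = \mathfrak{S}(132)$. Your route instead leans on the classical facts that $132$-avoiding equals dominant, that dominant means the code is a partition, and that $\ell(w) = |c(w)|$; what it buys is brevity and independence from the reduced-word machinery, while the paper's construction buys exactly the refinements it uses afterwards: under $\pi(\lambda)$ the number of parts of $\lambda$ is $w(1)-1$ (giving Corollary~\ref{cor:restricted partition}) and the number of distinct letters $d$ tracks staircase containment. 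Note that the two bijections are genuinely different maps, differing essentially by conjugation of $\lambda$: under the Lehmer code one has $w(1) = \lambda_1 + 1$ (largest part), not (number of parts)$+1$, so the code bijection would prove the theorem but would restate the refinement corollaries in conjugate form.

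One piece of your bookkeeping is stated incorrectly and needs repair, though it is repairable along the lines you yourself suggest. The condition $w(n) \neq n$ does \emph{not} translate into ``the code's support is an initial segment with $c_m > 0$'': every dominant permutation has a weakly decreasing code, so that property holds for all of them, including the padded copies such as $34125$ with code $(2,2,0,0,0)$ and $w(5)=5$. The correct statement is: if $w \in \mathfrak{S}_n$ is dominant with nonzero code parts $\lambda_1 \ge \cdots \ge \lambda_m > 0$, then $w(n) = n$ if and only if the same code fits in $\mathfrak{S}_{n-1}$, i.e.\ if and only if $n > N := \max_i(\lambda_i + i)$. (If $w(n)=n$, deleting the last letter leaves a dominant permutation in $\mathfrak{S}_{n-1}$ with the same code, forcing $\lambda_i + i \le n-1$ for all $i$; conversely, if $n > N$ then $w(i) \le c_i + i \le N < n$ for $i \le m$ and the remaining values are placed in increasing order, so $w(n)=n$.) Hence the unique representative of $\lambda$ in $\mathfrak{S}(132)$ lives in $\mathfrak{S}_N$ --- not $\mathfrak{S}_{m+1}$ in general (e.g.\ $\lambda=(3)$ gives $4123 \in \mathfrak{S}_4$). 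With that lemma supplied, and the $\ell = 0$ case handled as you indicate, your map $w \mapsto c(w)$ is indeed a bijection $\av(132;\ell) \to \{\lambda \vdash \ell\}$ and the proposal goes through.
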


The statement of this result is quite elegant in its simplicity, and its connection between two objects of such significant interest. Moreover, its bijective proof has a variety of implications. Before giving that proof, we discuss a few of those consequences.

We begin by introducing notation used in \cite{ec1}.

\begin{defn}
Let $\mathfrak{p}_k(n)$ be the number of partitions of $n$ into exactly $k$ parts.
\end{defn}

The bijection given below in the proof of Theorem~\ref{thm:132 by length} has the following implication for the number of these restricted partitions.

\begin{cor}\label{cor:restricted partition}
$\mathfrak{p}_k(\ell) = \Big| \av(132;\ell) \cap \{w : w(1) = k+1\}\Big|$.
\end{cor}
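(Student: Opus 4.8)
The proof of Theorem~\ref{thm:132 by length} will exhibit an explicit bijection $\Phi\colon \{\lambda \vdash \ell\} \to \av(132;\ell)$, and the plan is to track what $\Phi$ does to the number of parts of $\lambda$. By Remark~\ref{rem:132 as a pattern}, a $132$-avoiding permutation is exactly one whose (unique, in the dominant case) reduced word structure never isolates a letter from its neighbours; equivalently, by the dominant/partition-code correspondence cited from \cite{ec1}, each $w \in \av(132;\ell)$ is recorded by its Lehmer code, which is a partition of $\ell$. The suppression convention in Definition~\ref{defn:suppression} (with $p = 132$, so $p(k)=2\neq 3=k$, forcing $w(n)\neq n$) is precisely what makes the code-to-partition map a clean bijection onto the set of all partitions of $\ell$ with no trailing zero parts deleted ambiguously. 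So the first step is to pin down $\Phi$ explicitly — I expect $\Phi(\lambda)$ to be the dominant permutation whose Lehmer code is $(\lambda_1, \lambda_2, \ldots)$ read off the conjugate or the partition itself — and then verify $\ell(\Phi(\lambda)) = |\lambda| = \ell$ and that $\Phi(\lambda)$ lies in $\av(132;\ell)$ (the length-$\ell$ slice of $\mathfrak{S}(132)$), using that $\ell(w)$ is the number of inversions and that the code sums to the number of inversions.

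\textbf{Main steps.} (1) Recall/restate the bijection $\Phi$ from the proof of Theorem~\ref{thm:132 by length} and identify the parameter in $\lambda$ that corresponds to $w(1)$ under $\Phi$. The key observation is that $w(1) = k+1$ for a dominant permutation $w$ exactly when the first entry of its Lehmer code equals $k$, i.e. $w$ has exactly $k$ values to its right that are smaller than $w(1)$; since $w$ is dominant (= $132$-avoiding), those $k$ smaller values are precisely $1,2,\ldots,k$ appearing in increasing order after position $1$, so $w(1) = k+1$. (2) Translate this back through $\Phi$: the first part of $\lambda$, namely $\lambda_1$, equals the first code entry of $\Phi^{-1}(\ldots)$... more carefully, under the standard dominant-permutation/partition-code correspondence the number of \emph{parts} of $\lambda$ (not $\lambda_1$) is what controls $w(1)$, because a partition-shaped code $(c_1 \ge c_2 \ge \cdots)$ has $c_1$ equal to the number of nonzero parts of the conjugate partition. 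So I would check against the worked example $\av(132;4)$ listed in the excerpt — e.g. $\{23451,51234,4213,3241,3412\}$ with first entries $2,5,4,3,3$ — to fix the exact convention and confirm that restricting to $w(1)=k+1$ singles out those $w$ whose associated partition has exactly $k$ parts. (3) Conclude $|\av(132;\ell)\cap\{w:w(1)=k+1\}| = |\{\lambda \vdash \ell : \lambda \text{ has exactly } k \text{ parts}\}| = \mathfrak{p}_k(\ell)$, since $\Phi$ restricts to a bijection between these two sets.

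\textbf{Main obstacle.} The only real subtlety is bookkeeping: getting the convention exactly right so that "$w(1) = k+1$" matches "$\lambda$ has exactly $k$ parts" rather than "$\lambda_1 = k$" or some off-by-one variant, and making sure the suppression convention $w(1) \neq 1$ (forced here because $p(1) = 1$, so actually $w(1)\neq 1$ is \emph{not} imposed — one must double-check Definition~\ref{defn:suppression}: for $p=132$ we have $p(1)=1$, so only the $w(n)\neq n$ clause is active) does not secretly delete the $k=0$ case or create a mismatch at the boundary $k = \ell$ (the single part $\lambda = (\ell)$, corresponding to $w = \Phi((\ell))$ with $w(1) = \ell+1$). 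I would handle this by writing out $\Phi$ and $\Phi^{-1}$ precisely on codes, verifying the first-coordinate claim directly from the definition of the Lehmer code, and cross-checking the small case $\ell = 4$ against the explicit list already in the paper; once $\Phi$ is nailed down the corollary is immediate because it is just the restriction of an already-established bijection to a subset cut out by a single coordinate condition on each side.
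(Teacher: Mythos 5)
Your plan is correct and matches the paper's (implicit) argument: the corollary is exactly the restriction of the bijection $\lambda \mapsto \pi(\lambda)$ from Theorem~\ref{thm:132 by length}, using the fact---recorded in the proof of Lemma~\ref{lem:reading word is lex greatest}---that a partition with exactly $k$ parts maps to a permutation sending $1$ to $k+1$. Your Lehmer-code bookkeeping worry resolves itself just as you anticipated: $\pi(\lambda)$ is the dominant permutation whose code is the conjugate partition $\lambda'$, so $w(1)-1$ equals the number of parts of $\lambda$ (equivalently the largest part of $\lambda'$), and conjugation makes either convention count $\mathfrak{p}_k(\ell)$.
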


Note the connection between these results and \cite[Exercise 1.125]{ec1}. The binary sequences in that setting describe vertical and horizontal steps around the southeast edge of a partition, the inversions in such a sequence are in bijection with the squares of the shape, and the number of $1$s refers to the height (or width, depending on convention) of the shape. Thus, in light of our results here, these strings can also describe classes of $132$-avoiding permutations.

Another way to refine the result of Theorem~\ref{thm:132 by length} involves the following sets.

\begin{defn}
Partition $\av(p;\ell)$ by defining
$$\av(p;\ell, d) = \av(p;\ell) \cap \{w : \text{elements of } R(w) \text{ have $d$ distinct letters}\}.$$
\end{defn}

\begin{ex}\
\begin{enumerate}\renewcommand{\labelenumi}{(\alph{enumi})}
\item $\av(132;4,3) = \{4213, 3241, 3412\}$, whereas $\av(132;4,4) = \{23451, 51234\}$.
\item $\av(321;3,2) = \emptyset$ and $\av(321;3,3) = \av(321;3)$.
\end{enumerate}
\end{ex}

The bijection in the proof of Theorem~\ref{thm:132 by length} shows that
$$\big|\av(132;\ell,d)\big| = \big|\{\lambda \vdash \ell : \lambda \text{ fits inside the staircase shape } \delta_{d+1} \text{ but not inside } \delta_d\}\big|.$$
These values for small $\ell$ and $d$ are given in Table~\ref{table:fixed length and support}, and were obtained using Sage.

\begin{table}[htbp]
\begin{tabular}{c|cccccccccccc}
& $\ell = 0$ & 1 & 2 & 3 & 4 & 5 & 6 & 7 & 8 & 9 & 10 & 11\\
\hline
$d=0$ & 1 &0&0&0&0&0&0&0&0&0&0&0\\
1 &0 & 1 &0&0&0&0&0&0&0&0&0&0\\
2 &0&0&2 & 1&0&0&0&0&0&0&0&0\\
3 &0&0&0&2&3&3&1&0&0&0&0&0\\
4 &0&0&0&0&2&2&6&7&6&4&1&0\\
5 &0&0&0&0&0&2&2&4&8&12&15&17\\
6 &0&0&0&0&0&0&2&2&4&6&12&15\\
7 &0&0&0&0&0&0&0&2&2&4&6&10\\
8 &0&0&0&0&0&0&0&0&2&2&4&6\\
9 &0&0&0&0&0&0&0&0&0&2&2&4\\
10 &0&0&0&0&0&0&0&0&0&0&2&2\\
11 &0&0&0&0&0&0&0&0&0&0&0&2
\end{tabular}
\vspace{.1in}
\caption{The values $\big|\av(132;\ell,d)\big|$ for $0 \le \ell, d \le 11$.}\label{table:fixed length and support}
\end{table}

Because the sets $\av(132;\ell,d)$ partition $\av(132;\ell)$, we have
$$\big|\av(132;\ell)\big| = \sum_{d=0}^{\ell} \big| \av(132;\ell,d)\big|.$$
Moreover, we know from \eqref{eqn:s3 avoidance} that the Catalan numbers can be refined by this statistic $\big|\av(132;\ell,d)\big|$.

\begin{cor}\label{cor:refining catalan}
$$C_n = \sum_{0 \le d < n \atop 0 \le \ell \le \binom{n}{2}} \big|\av(132;\ell,d)\big|.$$
\end{cor}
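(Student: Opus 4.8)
The plan is to recover $C_n$ as $|\mathfrak{S}_n(132)|$ via \eqref{eqn:s3 avoidance} and then sort the $132$-avoiding permutations of $\mathfrak{S}_n$ by the statistics $\ell$ and $d$ used in the definition of $\av(132;\ell,d)$. The only mismatch is that $\av(132;\ell,d)\subseteq\mathfrak{S}(132)$, which by Definition~\ref{defn:suppression} has had trailing fixed points suppressed (and, since $132(1)=1$, retains leading fixed points). So the first step is the bijection $\mathfrak{S}_n(132)\longleftrightarrow\{w\in\mathfrak{S}(132):w\in\mathfrak{S}_m\text{ for some }m\le n\}$ given by stripping trailing fixed points in one direction and padding with them in the other; both directions preserve $132$-avoidance (appending an increasing run of new largest values cannot complete a $132$-pattern, exactly the observation made just before Definition~\ref{defn:suppression}), and they preserve $\ell$ and $R(w)$, hence $d$, verbatim. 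This gives $C_n=\sum_{m=0}^{n}\big|\mathfrak{S}(132)\cap\mathfrak{S}_m\big|$, where belonging to $\mathfrak{S}(132)\cap\mathfrak{S}_m$ means $w\in\mathfrak{S}_m$ with $w(m)\ne m$ (or $w$ is the empty permutation, $m=0$).

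The heart of the argument is to show that such a $w$ of true size $m\ge 2$ has $d=m-1$. First I would check that $w$ is sum-indecomposable: if $\{w(1),\dots,w(i)\}=\{1,\dots,i\}$ for some $1\le i\le m-1$, then $w$ restricts to a permutation of $\{i+1,\dots,m\}$ that does not fix $m$ (as $w(m)\ne m$ and $w(m)\in\{i+1,\dots,m\}$), hence has an inversion $w(a)>w(b)$ at positions $i<a<b\le m$; since $w(1)\le i$ lies below both values, $w(1),w(a),w(b)$ is a $132$-pattern, contradicting avoidance. So no such $i$ exists, which is precisely the statement that every reduced word of $w$ uses each generator $\sigma_i$, $1\le i\le m-1$ (a reduced word omits $\sigma_i$ exactly when $w$ preserves $\{1,\dots,i\}$ setwise, i.e.\ when $w$ splits at $i$). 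Hence $d=m-1$; the case $m=0$ gives $d=0$, and $m=1$ cannot occur. Moreover $\ell(w)\le\ell(m(m-1)\cdots 1)=\binom{m}{2}$. Therefore $w\in\mathfrak{S}(132)\cap\mathfrak{S}_m$ with $m\le n$ if and only if $w\in\mathfrak{S}(132)$ with $d<n$, and then automatically $\ell(w)\le\binom{m}{2}\le\binom{n}{2}$.

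Reindexing the sum by $d$ in place of $m$ (the correspondence being $0\leftrightarrow 0$ and $m\leftrightarrow m-1$ for $m\ge 2$, a bijection of $\{0\}\cup\{2,\dots,n\}$ onto $\{0,1,\dots,n-1\}$) gives $C_n=\sum_{d=0}^{n-1}\sum_{\ell\ge 0}\big|\av(132;\ell,d)\big|$; since $\big|\av(132;\ell,d)\big|=0$ unless $\ell\le\binom{d+1}{2}\le\binom{n}{2}$, capping the $\ell$-range at $\binom{n}{2}$ leaves the total unchanged, which is the asserted identity (read for $n\ge 1$). I expect the sum-indecomposability step — equivalently, pinning $d=m-1$, which is what makes the ranges $0\le d<n$ and $0\le\ell\le\binom{n}{2}$ come out exactly right — to be the only real content, the bijection and reindexing being bookkeeping. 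As an alternative closing move, one could instead feed the already-recorded refinement $\big|\av(132;\ell,d)\big|=\big|\{\lambda\vdash\ell:\lambda\subseteq\delta_{d+1},\ \lambda\not\subseteq\delta_d\}\big|$ into the sum: summing over $0\le d<n$ telescopes to $\big|\{\lambda:\lambda\subseteq\delta_n\}\big|$, and partitions fitting inside the staircase $\delta_n$ are classically counted by $C_n$.
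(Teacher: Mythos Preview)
Your proposal is correct and matches the paper's (entirely implicit) argument: the paper states the corollary with no proof, relying on the surrounding discussion that $C_n=|\mathfrak{S}_n(132)|$ and that the $\av(132;\ell,d)$ partition $\mathfrak{S}(132)$, together with the staircase refinement displayed just before the corollary. You have supplied exactly the details the paper omits, most notably the sum-indecomposability argument pinning $d=m-1$ for $w\in\mathfrak{S}(132)\cap\mathfrak{S}_m$; your alternative telescoping finish via $\{\lambda\subseteq\delta_n\}$ is also valid and is arguably closer in spirit to what the paper gestures at.
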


In other words, $C_n$ is the sum of all values weakly northwest of the entry $\ell = \binom{n}{2}$ and $d = n-1$ in the extension of Table~\ref{table:fixed length and support}.

\begin{ex}
\begin{eqnarray*}
\sum_{0 \le d < 4 \atop 0 \le \ell \le \binom{4}{2}} \big|\av(132;\ell,d)\big| &=& \sum_{0 \le d < 4} \ \sum_{0 \le \ell \le 6} \big|\av(132;\ell,d)\big|\\
&=& (1) + (1) + (2 + 1) + (2 + 3 + 3 + 1)\\
&=& 14\\
&=& C_4.
\end{eqnarray*}
\end{ex}

We conclude this section with the proof of Theorem~\ref{thm:132 by length}. We divide the argument into a sequence of lemmas in order to highlight the features of the bijection involved in the proof of the theorem.

Throughout our arguments, we consider partitions to be tableaux with a particular filling.

\begin{defn}
The \emph{antidiagonal filling} of a partition $\lambda$ is defined so that all squares along the highest antidiagonal (that is, southwest-to-northeast) are labeled $1$, all squares along the next highest antidiagonal are labeled $2$, and so on. Equivalently, if the rows are indexed from top to bottom and the columns are indexed from left to right, then the square in row $r$ and column $c$ is labeled $r+c-1$.
\end{defn}

An example of this filling appears in Figure~\ref{fig:antidiagonal}. 

\begin{figure}[htbp]
$\tableau{1&2&3&4&5&6&7\\2&3&4&5\\3&4&5&6\\4&5\\5}$
\caption{Antidiagonal filling of the partition $(7,4,4,2,1)\vdash 18$.}\label{fig:antidiagonal}
\end{figure}

\begin{defn}
For any partition $\lambda$ with the antidiagonal filling, its \emph{reading word}, denoted $\read(\lambda)$, is obtained by concatenating the entries of the rows of $\lambda$, from bottom to top, and from left to right within each row.
\end{defn}

Continuing the example depicted in Figure~\ref{fig:antidiagonal}, $\read(7,4,4,2,1) = \sf{545345623451234567}$.

\begin{lem}\label{lem:reading word is reduced}
For any partition $\lambda$ with the antidiagonal filling, the word $\read(\lambda)$ is reduced.
\end{lem}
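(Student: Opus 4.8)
The plan is to show that $\read(\lambda)$ represents a permutation of length equal to the number of squares of $\lambda$, which is exactly $|\lambda|$, the number of letters in $\read(\lambda)$; since a word of length $\ell$ representing a permutation of length $\ell$ is automatically reduced, this suffices. So the real content is to identify the permutation $w_\lambda$ that $\read(\lambda)$ produces and to check $\ell(w_\lambda) = |\lambda|$.

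First I would set up the computation by processing the rows of $\lambda$ from bottom to top, i.e.\ in the order they appear in $\read(\lambda)$, each row read left to right. A row of length $c$ sitting in row-index $r$ (counting from the top) contributes the factor $\sigma_r \sigma_{r+1}\cdots\sigma_{r+c-1}$ to the product, by the antidiagonal filling rule (square in row $r$, column $j$ is labeled $r+j-1$). Such a factor, applied on the right to the current permutation, is a ``cycle'' of adjacent transpositions: it takes the value currently in position $r$ and slides it rightward past the next $c$ values. I would track the one-line notation of the partial product as the rows are consumed from the bottom up, and argue by induction on the number of rows that after processing all rows from index $r_0$ down to some level, the resulting permutation has exactly as many inversions as there are squares processed so far. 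The key local fact is that at the moment a row of index $r$ is processed, the value sitting in position $r$ of the current one-line notation is smaller than each of the next $c$ values to its right (these positions still hold values that have not yet been disturbed by any earlier-processed, i.e.\ lower, row, because those rows have index $> r$ and their sliding action happens strictly to the right; here the column-strictness of a partition shape — each row is no longer than the one above, wait, no, each row below is no longer than the one above — is exactly what guarantees the lengths nest correctly). Hence each of the $c$ adjacent transpositions in the factor $\sigma_r\cdots\sigma_{r+c-1}$ creates exactly one new inversion and destroys none, so the length increases by precisely $c$, the number of squares in that row.

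Summing over all rows gives $\ell(w_\lambda) = \sum_r (\text{length of row } r) = |\lambda|$, which equals the number of letters in $\read(\lambda)$, so $\read(\lambda) \in R(w_\lambda)$ and the word is reduced. I expect the main obstacle to be bookkeeping the claim that, when the factor for row $r$ acts, the $c$ positions $r+1,\dots,r+c$ in the current one-line notation hold values all exceeding the value in position $r$ — equivalently, that no inversion is ever cancelled. This is where one must use the nesting of row lengths (the shape being a genuine partition) together with the fact that rows are processed from the bottom (large index) to the top (small index), so that by the time row $r$ is processed, the only values that have moved are those that started in positions $\ge r+1$, and they have only moved rightward among positions $\ge r+1$; a careful induction, or an explicit description of the intermediate one-line notations in terms of the partial shape, makes this precise. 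An alternative, possibly cleaner, route is to exhibit $w_\lambda$ directly as the dominant permutation whose Lehmer code is the conjugate partition (or $\lambda$ itself, depending on convention), observe its length is $|\lambda|$, and then verify separately that $\read(\lambda)$ evaluates to this $w_\lambda$; but the inversion-counting induction above is self-contained and avoids needing that identification.
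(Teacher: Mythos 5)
Your argument is correct, but it is genuinely different from the paper's. You evaluate $\read(\lambda)$ as a permutation, processing the rows bottom to top by right multiplication, and show that each letter raises the length by exactly one, so that $\ell(\pi(\lambda)) = |\lambda|$ equals the number of letters and the word is reduced by definition. The key step checks out: when the factor $\sigma_r\sigma_{r+1}\cdots\sigma_{r+\lambda_r-1}$ for row $r$ is applied, all previously processed rows have index greater than $r$ and hence use only letters $\ge r+1$, so positions $1,\ldots,r$ still hold the values $1,\ldots,r$ and positions $\ge r+1$ hold only values $\ge r+1$; thus each swap slides the value $r$ past a larger value and creates one inversion. Note that this is all you need -- the nesting of row lengths (the partition condition) that you worry about in your parenthetical is not actually required for this step, so that part of your discussion is a harmless red herring rather than a gap. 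The paper instead argues at the level of the word and the Coxeter relations: an unreduced word could be transformed by commutations and braids into one containing a factor $xx$ (or $x(x-1)x(x-1)$), and the antidiagonal filling forbids this because any two occurrences of $x$ in $\read(\lambda)$ are separated by an $x-1$, and any two occurrences of $x-1$ by an $x-2$. Your inversion-counting route is more elementary and self-contained (no appeal to that rewriting criterion), and it yields as a byproduct the fact, noted separately after the lemma in the paper, that $\ell(\pi(\lambda)) = |\lambda|$; the paper's route has the advantage of never leaving the word itself.
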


\begin{proof}
To be unreduced, we must be able to apply commutation and braid moves to this word to obtain a new word with either $xx$ or, without loss of generality, $x(x-1)x(x-1)$ as factors. Antidiagonal fillings impose strict rules on $\read(\lambda)$. In particular, any two appearances of $x$ in $\read(\lambda)$ are separated by an appearance of $x-1$. Neither $x$ can pass over this $x-1$ to reach the other, so we cannot encounter an $xx$ factor. Similarly, we cannot see $\cdots x \cdots x-1 \cdots x \cdots x-1 \cdots$ without $x-2$ appearing between the two copies of $x-1$, and thus the factor $x(x-1)x(x-1)$ does not arise either.
\end{proof}

Lemma~\ref{lem:reading word is reduced} means that $\read(\lambda)$ defines a permutation. Moreover, the length of this permutation is equal to the size of $\lambda$, because it is equal to the number of letters in $\read(\lambda)$.

\begin{defn}
Write $\pi(\lambda)$ for the permutation for which $\read(\lambda)$ is a reduced word.
\end{defn}

Continuing our example, $\read(\lambda) \in R(65472381)$. Thus $\pi(7,4,4,2,1) = 65472381 \in \mathfrak{S}_8$. Note that this permutation is $132$-avoiding.

\begin{lem}\label{lem:reading word is lex greatest}
The map $\lambda \mapsto \pi(\lambda)$ is injective.
\end{lem}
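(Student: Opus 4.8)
The goal is to show that distinct partitions $\lambda$ yield distinct permutations $\pi(\lambda)$. The plan is to recover $\lambda$ from $\pi(\lambda)$ — or, more practically, to recover it from $\read(\lambda)$, and then argue that $\read(\lambda)$ is itself determined by $\pi(\lambda)$. The key structural fact is that the antidiagonal filling produces a reading word with a very rigid shape: reading rows from bottom to top, the bottom row contributes a single ascending run starting at the number of rows of $\lambda$, and each successive row (moving upward) contributes an ascending run starting at one less than the previous row's starting value. So $\read(\lambda)$ is a concatenation of ascending runs whose initial letters are strictly decreasing by $1$ at each step. First I would observe that from such a word one reads off: the number of rows of $\lambda$ (the first letter of the word), and then the length of each successive run, which is exactly $\lambda_r$ for the corresponding row $r$. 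Hence $\read$ is injective on partitions.

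The real content, and what I expect to be the main obstacle, is showing that $\pi(\lambda)$ determines $\read(\lambda)$ — i.e.\ that the particular reduced word $\read(\lambda)$ can be singled out among all of $R(\pi(\lambda))$ in a way that does not depend on knowing $\lambda$ in advance. The title of the lemma in the paper, ``reading word is lex greatest,'' strongly suggests the intended characterization: $\read(\lambda)$ is the lexicographically largest element of $R(\pi(\lambda))$ (under some fixed reading convention, presumably comparing words left-to-right with larger letters counting as ``greater''). If that is the characterization, then injectivity of $\pi$ follows immediately from injectivity of $\read$: if $\pi(\lambda) = \pi(\mu)$, then $\read(\lambda)$ and $\read(\mu)$ are both the lex-greatest reduced word of this common permutation, hence equal, hence $\lambda = \mu$.

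So the plan reduces to proving: $\read(\lambda)$ is the lexicographically greatest word in $R(\pi(\lambda))$. I would approach this by showing that no commutation or braid move applied to $\read(\lambda)$ can increase it lexicographically, and in fact that $\read(\lambda)$ is the \emph{unique} lex-maximal element, using that the braid graph on $R(\pi(\lambda))$ is connected. For a commutation move $\cdots i j \cdots \leftrightarrow \cdots j i \cdots$ with $|i-j|>1$: to increase the word lexicographically we would need to move a larger letter leftward past a smaller one, i.e.\ have a factor $i j$ with $i < j$ and swap to $j i$. I would argue that in $\read(\lambda)$, whenever we see such an adjacent pair $ij$ with $i<j$ and $|i-j|\ge 2$ — which, given the run structure, happens exactly at the boundary between two consecutive rows where the bottom of a run meets the start of the next — the letters are actually consecutive ($j = i+1$), so the commutation does not apply; more generally I must check every adjacent pair. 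For braid moves $x(x\pm1)x \leftrightarrow (x\pm1)x(x\pm1)$: I would check that $\read(\lambda)$ never contains a factor of the form $(x-1)x(x-1)$ that, when braided to $x(x-1)x$, would give a lexicographic increase — and here the antidiagonal structure helps, since (as already used in Lemma~\ref{lem:reading word is reduced}) two copies of $x-1$ are always separated by a copy of $x-2$, so the obstructing factor simply is not present. The hard part will be organizing the bookkeeping so that these local checks genuinely cover \emph{all} ways a sequence of moves could produce a lexicographically larger word, rather than just all single moves; the cleanest route is probably to show directly that any reduced word of $\pi(\lambda)$ other than $\read(\lambda)$ admits \emph{some} move (commutation or braid) that strictly increases it, so $\read(\lambda)$ is the unique local — hence global — maximum. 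Alternatively, and perhaps more cheaply, one can appeal to Remark~\ref{rem:132 as a pattern}: since $\pi(\lambda)$ is $132$-avoiding its reduced words have no ``isolated'' letter $x$, which constrains the structure of $R(\pi(\lambda))$ enough to pin down the lex-maximal element as the row-reading word.
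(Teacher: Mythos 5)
Your overall reduction is sound as far as it goes: $\read$ is injective (the first letter of $\read(\lambda)$ is the number of parts, and the maximal ascending runs give the parts from bottom to top), and since all elements of $R(w)$ have the same length, lexicographic order has a unique maximum on $R(w)$; so \emph{if} $\read(\lambda)$ were known to be the lex-greatest element of $R(\pi(\lambda))$, then $\pi(\lambda)=\pi(\mu)$ would force $\read(\lambda)=\read(\mu)$ and hence $\lambda=\mu$. The genuine gap is that you never prove that claim. What you actually verify -- that no single commutation or braid applied to $\read(\lambda)$ yields a lexicographically larger word -- only establishes local maximality, and, as you yourself note, local maximality under single moves does not imply global maximality. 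Your proposed repair (every reduced word of $\pi(\lambda)$ other than $\read(\lambda)$ admits some strictly increasing move) is exactly the hard statement, is left entirely unproven, and is not even obviously true: $\read(\lambda)$ could be the global maximum while other local maxima exist, in which case that route fails even though the claim you need is true. Your fallback via Remark~\ref{rem:132 as a pattern} is circular for this purpose: it needs $\pi(\lambda)$ to be $132$-avoiding, which is only proved later (second half of Lemma~\ref{lem:132 bijection}), and even granting that, the lex-greatest-word analysis there shows only that the lex-greatest word of $\pi(\lambda)$ equals $\read(\nu)$ for \emph{some} partition $\nu$ with $\pi(\nu)=\pi(\lambda)$; identifying $\nu$ with $\lambda$ is precisely the injectivity being proved.

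For comparison, the paper avoids reduced words entirely and argues on one-line notation: $\pi(\lambda)(1)=k+1$ recovers the number $k$ of parts, and the initial increasing prefix $(k+1)(k+2)\cdots(k+i)$ of $\pi(\lambda)$, followed by a value below $k+1$, recovers the length $i$ of the bottom row; hence two partitions with the same image must agree on the number of rows and on the bottom row, and repeating (stripping that row) forces equality. If you want to keep the lex-greatest strategy, you must supply an actual proof that $\read(\lambda)$ is the global lexicographic maximum of $R(\pi(\lambda))$ -- e.g.\ by a greedy, letter-by-letter exchange argument -- which is substantially more work than the short direct argument the statement admits.
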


\begin{proof}
Let $\lambda$ be a partition having $k$ parts, and $k$th part of size $i$. Suppose that $\pi(\lambda) = \pi(\lambda')$. By construction, the permutation $\pi(\lambda)$ sends $1$ to $k+1$. Thus, $\lambda$ and $\lambda'$ must have the same number of rows. If $\lambda \neq \lambda'$, then it suffices to assume that there bottom rows have different lengths, say $i$ and $i'$, respectively, where $i<i'$. By definition of the filling and construction of the permutation, the leftmost values in the one-line notation of $\pi(\lambda)$ are $(k+1)(k+2)\cdots(k+i)$. Moreover, the next value, the image of $i+1$, must be less than $k+1$. The leftmost values in the one-line notation of $\pi(\lambda)$ are $(k+1)(k+2)\cdots(k+i')$. Since $i' > i$, this is a contradiction. Therefore $\lambda = \lambda'$.
\end{proof}

We now show the connection between $132$-patterns and the permutations $\{\pi(\lambda)\}$.

\begin{lem}\label{lem:132 bijection}
$\mathfrak{S}(132) = \{\pi(\lambda)\}$.
\end{lem}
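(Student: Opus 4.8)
The map $\lambda\mapsto\pi(\lambda)$ is already known to be injective (Lemma~\ref{lem:reading word is lex greatest}) and to satisfy $\ell(\pi(\lambda))=|\lambda|$, so the substance of Lemma~\ref{lem:132 bijection} — and, on restricting to a fixed length, of Theorem~\ref{thm:132 by length} — is just that the image of this map is exactly $\mathfrak{S}(132)$. My plan is to identify the one-line notation of $\pi(\lambda)$ outright: $\pi(\lambda)$ is the dominant permutation whose Lehmer code is the conjugate partition $\lambda'$. Both inclusions then follow, using that a permutation is $132$-avoiding if and only if it is dominant and that a permutation is dominant if and only if its code is a partition.

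To prove the identification I would induct on $|\lambda|$, the base case $\lambda=\varnothing$ giving the empty reduced word and $\pi(\varnothing)=e$. For the inductive step, put $\mu=(\lambda_2,\dots,\lambda_k)$. Comparing reading words shows that $\read(\mu)$ shifted up by one is a reduced word for $1\oplus\pi(\mu)$, the permutation with $(1\oplus\pi(\mu))(1)=1$ and $(1\oplus\pi(\mu))(i)=\pi(\mu)(i-1)+1$ for $i\geq2$, and that $\read(\lambda)$ is that word with $1\,2\,\cdots\,\lambda_1$ appended; hence $\pi(\lambda)=(1\oplus\pi(\mu))\,\sigma_1\sigma_2\cdots\sigma_{\lambda_1}$, with $\ell(\pi(\lambda))=\ell(\pi(\mu))+\lambda_1$. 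The code of $1\oplus\pi(\mu)$ is the code of $\pi(\mu)$ with a $0$ prepended, which is $(0,\mu'_1,\mu'_2,\dots)$ by induction; right-multiplying by the length-increasing string $\sigma_1\cdots\sigma_{\lambda_1}$ then deletes this leading $0$ and raises each of the next $\lambda_1$ code entries by one, yielding exactly $\lambda'$. So $\pi(\lambda)$ is dominant, hence $132$-avoiding. It also has no trailing fixed point: the antidiagonals of a Young diagram form an initial segment, so $\read(\lambda)$ genuinely uses its top letter, i.e.\ $\pi(\lambda)$ involves the generator $\sigma_{n-1}$ when viewed in $\mathfrak{S}_n$, and a permutation fixing $n$ can have no reduced word using $\sigma_{n-1}$ (a reduced word only ever creates inversions, and $\sigma_{n-1}$ would create the inversion between positions $n-1$ and $n$, which could never be undone). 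The first-entry clause of Definition~\ref{defn:suppression} is vacuous since $132(1)=1$. Hence $\pi(\lambda)\in\mathfrak{S}(132)$, proving $\{\pi(\lambda)\}\subseteq\mathfrak{S}(132)$.

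For the reverse inclusion, $\mathfrak{S}(132)$ is precisely the set of dominant permutations having no trailing fixed point, and on this set $w\mapsto(\text{code of }w)$ is a bijection onto the set of all partitions. Since $\mu\mapsto\mu'$ is an involution of partitions and, by the previous paragraph, $\lambda\mapsto\pi(\lambda)$ carries each $\lambda$ to the dominant permutation with code $\lambda'$, the map $\lambda\mapsto\pi(\lambda)$ is onto $\mathfrak{S}(132)$. This gives $\mathfrak{S}(132)\subseteq\{\pi(\lambda)\}$ and completes the proof.

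The step demanding the most care is the code identification in the second paragraph; the rest is bookkeeping with standard facts about dominant permutations. One could instead stay inside this paper's reduced-word framework and deduce $132$-avoidance of $\pi(\lambda)$ directly from Remark~\ref{rem:132 as a pattern}, the task then being to show that no reduced word of $\pi(\lambda)$ has a letter isolated from $\{x-1,x\}$; the antidiagonal filling makes every occurrence of $x$ in $\read(\lambda)$ flanked by occurrences of $x-1$, but showing that this obstruction to isolation survives the commutations and braids among the reduced words of $\pi(\lambda)$ is itself delicate, so I would favor the explicit code computation.
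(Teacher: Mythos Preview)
Your proof is correct, but it takes a genuinely different route from the paper's. The paper stays entirely inside its reduced-word framework: for $\mathfrak{S}(132)\subseteq\{\pi(\lambda)\}$ it takes the lexicographically greatest reduced word of a $132$-avoiding $w$, breaks it into maximal increasing runs, and uses the isolation criterion of Remark~\ref{rem:132 as a pattern} (hence Theorem~\ref{thm:patt redwd}) to force those runs to be exactly the rows of an antidiagonally filled shape; for the reverse inclusion it inducts on the number of parts by peeling off the \emph{bottom} row and checking directly that the new inversions create no $132$. You instead identify $\pi(\lambda)$ outright as the dominant permutation with Lehmer code $\lambda'$, inducting by peeling off the \emph{top} row, and then both inclusions fall out of the standard code/dominance dictionary that the paper itself cites just before this lemma. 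Your approach is more elementary and yields the pleasant extra fact $\mathrm{code}(\pi(\lambda))=\lambda'$, which the paper does not state; the paper's approach has the virtue of actually exercising the machinery the paper is advertising. Your final paragraph correctly anticipates this trade-off.
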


\begin{proof}
Fix $w \in \mathfrak{S}(132)$ and let $\sf{s} \in R(w)$ be its lexicographically greatest reduced word. View $\sf{s}$ as the concatenation of maximally long factors of consecutively increasing values:
$$\sf{\big(x_1 (x_1+1) (x_1+2) \cdots (x_1 + i_1)\big)\big(x_2 (x_2+1) (x_2+2) \cdots (x_2 + i_2)\big) \cdots}.$$
Suppose that $x_{j'+1} = x_{j'} - 1$ for all $j' < j$, and consider $x_{j+1}$. The word is reduced, so $x_{j+1} \neq x_j+i_j$. Also, $i_j$ is maximal, so $x_{j+1} \neq x_j+i_j+1$. If $x_{j+1} > x_j+s_j+1$, then $\sf{s}$ would not be maximal because $x_{j+1}$ and $x_j+i_j$ could commute. Similarly, if $x_{j+1} \in [x_j,x_j+i_j-1]$, then commutation moves to slide $x_{j+1}$ leftward and a braid move to change $x_{j+1}(x_{j+1}+1)x_{j+1}$ into $(x_{j+1}+1)x_j(x_{j+1}+1)$ would again contradict the maximality of $\sf{s}$. Thus $x_{j+1} < x_j$. Define $u$ and $v$ as in Definition~\ref{defn:isolation}, so that $u\sigma_{x_j}v = w$. The choice of $j$ means that $\ell(u\sigma_{x_j-1}) > \ell(u)$. To be $132$-avoiding, the letter $x_j$ cannot be isolated on $[x_j-1,x_j]$ (see Remark~\ref{rem:132 as a pattern}). Thus, $\ell(\sigma_{x_j-1}v) < \ell(v)$. Because $\sf{s}$ is lexicographically maximal, $x_{j+1} = x_j-1$ for all $j$.

Suppose that $i_j < i_{j-1}$. Obtain $\sf{s'}$ from $\sf{s}$ by sliding the factor $(x_{j-1}+i_j+1)\cdots (x_{j-1}+i_{j-1})$ to the right of $x_j+i_j = x_{j-1}-1+i_j$ via a sequence of commutations. In $\sf{s'}$, the letter $x_{j-1}+i_j+1$ is isolated on $[x_{j-1}+i_j,x_{j-1}+i_j+1]$, meaning that $w \not\in \mathfrak{S}(132)$ -- a contradiction. Finally, if a nonempty $\sf{s}$ has minimal letter $m > 1$, then any appearance of $m$ would be isolated on $[m-1,m]$. Thus, by the previous discussion, $x_J = 1$ for the maximal value of $J$. The word $\sf{s}$ as described is exactly $\read(i_J+1, i_{J-1}+1, \ldots, i_1+1)$. Thus $\mathfrak{S}(132) \subseteq \{\pi(\lambda)\}$.

For the reverse inclusion, first suppose that $\lambda = (\ell)$ has only one part. Then $\read(\lambda) = \sf{12\cdots \ell}$ and $\pi(\lambda) = 234\cdots \ell(\ell+1)1$, which is certainly $132$-avoiding. Suppose, inductively, that for any partition $\mu$ having fewer than $k$ parts, the permutation $\pi(\mu)$ is $132$-avoiding. Let $\lambda$ be a partition having $k$ parts and $k$th part of size $i$. Let $\mu$ be the partition obtained from $\lambda$ by removing the last part. By construction, $\pi(\lambda) = u\cdot\pi(\mu)$, where $u = \sigma_{k} \sigma_{k+1}\cdots \sigma_{k+i-1}$. The leftmost values in the one-line notation of $\pi(\lambda)$ are $(k+1)(k+2)\cdots (k+i)$. The only inversions in $\pi(\lambda)$ that do not appear in $\pi(\mu)$ arise because $k$ now appears to the right of the values $\{k+1,k+2,\ldots, k+i\}$. For $\pi(\lambda)$ to introduce a $132$-pattern, then, we would need $\langle 32\rangle = (k+j)k$ for some $j \in [1,i]$. However, no value less than $k$ appears to the left of this $k+j$, so there can be no such $132$-pattern. Hence $\pi(\lambda) \in \mathfrak{S}(132)$ and so $\{\pi(\lambda)\} \subseteq \mathfrak{S}(132)$.
\end{proof}

We are now ready to prove the main result of this section.

\begin{proof}[Proof of Theorem~\ref{thm:132 by length}]
Lemmas~\ref{lem:reading word is reduced} and~\ref{lem:reading word is lex greatest} give a bijective correspondence between partitions and a particular class $\{\pi(\lambda)\}$ of permutations. Lemma~\ref{lem:132 bijection} shows that this class is exactly $\mathfrak{S}(132)$. Therefore we have a bijection between partitions and elements of $\mathfrak{S}(132)$, and, more precisely, between partitions of $\ell$ and elements of $\mathfrak{S}(132;\ell)$.
\end{proof}

By the symmetry of their reduced words, this section's manipulations and enumerative results about $132$-avoidance can be recast in terms of $213$-avoidance. However, despite expression \eqref{eqn:s3 avoidance}, we cannot replace $132$ there by any other element of $\mathfrak{S}_3$ in those statements. Our techniques would still apply for those other patterns, but the details -- and the enumerations -- would differ. In the final section of this paper, we give a glimpse of how this would work, and state enumerative results about $231$-avoidance.

\section{Final remarks}

It is clear from this work that reduced word manipulation is ripe for application in many settings. Here we have focused only on the symmetric group, but the technique could certainly be employed in other Coxeter groups and to other structural questions. The breadth of applications we have encountered so far is highly promising, and we are optimistic that these methods will continue to yield fruitful results.

For example, if one were to enumerate $231$-avoiding permutations (that is, stack-sortable permutations, \cite{knuth}) by length, one would be looking for factors $x(x+1)$ to be unisolated on $[x,x+1]$. This means, among other things, that if the letters $x$ and $x+1$ each appear only once in a reduced word, then their relative order is forced: they would have to appear as $\sf{(x+1)}\cdots \sf{x}$. On the other hand, if $x$ and $y$ each appear only once in a reduced word and $|x-y| > 1$, then their relative order does not matter. This produces the following enumerations of $231$-avoiding permutations in $\mathfrak{S}_n$ by length.
\begin{eqnarray*}
\big|\{w \in \mathfrak{S}_n(231) : \ell(w) = 1\}\big| &=& n-1\\
\big|\{w \in \mathfrak{S}_n(231) : \ell(w) = 2\}\big| &=&\binom{n-1}{2}\\
\big|\{w \in \mathfrak{S}_n(231) : \ell(w) = 3\}\big| &=&\binom{n-1}{3} + n-2\\
\big|\{w \in \mathfrak{S}_n(231) : \ell(w) = 4\}\big| &=&\binom{n-1}{4} + (n-2)(n-3)\\
\big|\{w \in \mathfrak{S}_n(231) : \ell(w) = 5\}\big| &=&\binom{n-1}{5} + (n-2)\binom{n-3}{2} + n-3
\end{eqnarray*}

\section*{Acknowledgements}

I am grateful for the thoughtful comments of the anonymous referees.


\begin{thebibliography}{99}

\bibitem{albert atkinson ruskuc} M.~H.~Albert, M.~D.~Atkinson, and N.~Ru\v{s}kuc, Regular closed sets of permutations, \textit{Theoret. Comput. Sci.} \textbf{306} (2003), 85--100.

\bibitem{andrews} G.~E.~Andrews, \textit{The Theory of Partitions}, Encyclopedia of Mathematics and its Applications, Cambridge University Press, Cambridge, 1998.

\bibitem{andrews eriksson} G.~E.~Andrews and K.~Eriksson, \textit{Integer Partitions}, Cambridge University Press, Cambridge, 2004.

\bibitem{atkinson} M.~D.~Atkinson, Permutations which are the union of an increasing and a decreasing subsequence, \textit{Electron.~J.~Comb.} \textbf{5} (1998), R6.

\bibitem{bjs} S.~C.~Billey, W.~Jockusch, and R.~P.~Stanley, Some combinatorial properties of Schubert polynomials, \textit{J.~Alg.~Combin.} \textbf{2} (1993), 345-374.

\bibitem{billey pawlowski} S.~Billey and B.~Pawlowski, Permutation patterns, Stanley symmetric functions and generalized Specht modules, \textit{J.~Combin.~Theory Ser.~A} \textbf{127} (2014), 85--120.

\bibitem{bjorner brenti} A.~Bj\"{o}rner and F.~Brenti, \textit{Combinatorics of Coxeter Groups}, Graduate Texts in Mathematics 231, Springer, New York, 2005.

\bibitem{bona} M.~B\'ona, Exact enumeration of $1342$-avoiding permutations: a close link with labeled trees and planar maps, \textit{J.~Combin.~Theory Ser.~A} \textbf{80} (1997), 257--272.

\bibitem{branden claesson} P.~Br\"and\'en and A.~Claesson, Mesh patterns and the expansion of permutation statistics as sums of permutation patterns, \textit{Electron.~J.~Comb.} \textbf{18} (2011), P5.

\bibitem{claesson jelinek steingrimsson} A.~Claesson, V\'it Jel\'inek, and E. Steingr\'imsson, Upper bounds for the Stanley-Wilf limit of $1324$ and other layered patterns, \textit{J.~Combin.~Theory Ser.~A} \textbf{119} (2012), 1680--1691.

\bibitem{dyer} M.~J.~Dyer, On the ``Bruhat graph'' of a Coxeter system, \textit{Compos.~Math.} \textbf{78} (1991), 185--191.

\bibitem{elnitsky} S.~Elnitsky, Rhombic tilings of polygons and classes of reduced words in Coxeter groups, \textit{J.~Combin.~Theory, Ser.~A} \textbf{77} (1997), 193--221.

\bibitem{hultman1} A.~Hultman, Bruhat intervals of length $4$ in Weyl groups, \textit{J.~Combin.~Theory Ser.~A} \textbf{102} (2003), 163--178.

\bibitem{hultman2} A.~Hultman, Combinatorial complexes, Bruhat intervals and reflection distances, PhD thesis, KTH, 2003.

\bibitem{hultman vorwerk} A.~Hultman and K.~Vorwerk, Pattern avoidance and Boolean elements in the Bruhat order on involutions, \textit{J.~Algebr.~Comb.} \textbf{30} (2009), 87--102.

\bibitem{kitaev} S.~Kitaev, \textit{Patterns in Permutations and Words}, Monographs in Theoretical Computer Science, Springer-Verlag, Berlin, 2011.

\bibitem{knuth} D.~E.~Knuth, \textit{The Art of Computer Programming Vol.~1}, Addison-Wesley, Reading, Massachusetts, 1997.

\bibitem{lascoux schutzenberger} A.~Lascoux and M.~P.~Sch\"{u}tzenberger, Polyn\^{o}mes de Schubert, \textit{C.~R.~Acad.~Sci.~Paris, S\'{e}rie I} \textbf{294} (1982), 447-450.

\bibitem{macdonald} I.~G.~Macdonald, \textit{Notes on Schubert Polynomials}, Laboratoire de combinatoire et d'informatique math\'{e}matique (LACIM), Universit\'{e} du Qu\'{e}bec \`{a} Montr\'{e}al, Montreal, 1991.

\bibitem{manivel} L.~Manivel, \textit{Symmetric functions, Schubert polynomials and degeneracy loci}, SMF/AMS Texts and Monographs, vol.~6, American Mathematical Society, Providence, RI, 2001.

\bibitem{oeis} OEIS Foundation Inc, The On-Line Encyclopedia of Integer Sequences, 2015, available electronically at {\tt http://oeis.org}.

\bibitem{ragnarsson-tenner1} K.~Ragnarsson and B.~E.~Tenner, Homotopy type of the boolean complex of a Coxeter system, \textit{Adv.~Math.} \textbf{222} (2009), 409--430.

\bibitem{simion schmidt} R.~Simion and F.~W.~Schmidt, Restricted permutations, \textit{European J.~Combin.} \textbf{6} (1985), 383-406.

\bibitem{AOC2007} E.~Steingr\'imsson (Ed.), Permutation Patterns 2006, \textit{Ann.~Comb.} \textbf{11(3-4)} (2007).

\bibitem{stankova} Z.~Stankova, Classification of forbidden subsequences of length $4$, \textit{European J.~Combin.} \textbf{17} (1996), 501--517.

\bibitem{stanley} R.~P.~Stanley, On the number of reduced decompositions of elements of Coxeter groups, \textit{European J.~Combin.} \textbf{5} (1984), 359--372.

\bibitem{ec1} R.~P.~Stanley, \textit{Enumerative Combinatorics, vol.~1}, Cambridge University Press, Cambridge, 2011.

\bibitem{stanley-catalan} R.~P.~Stanley, \textit{Catalan Numbers}, Cambridge University Press, New York, 2015.

\bibitem{tenner rdpp} B.~E.~Tenner, Reduced decompositions and permutation patterns, \textit{J.~Algebr.~Comb.} \textbf{24} (2006), 263--284.

\bibitem{tenner patt-bru} B.~E.~Tenner, Pattern avoidance and the Bruhat order, \textit{J.~Combin.~Theory, Ser.~A} \textbf{114} (2007), 888--905.

\bibitem{dppa} B.~E.~Tenner, Database of permutation pattern avoidance, published electronically at \hfill \phantom{*} {\tt http:/$\!\!$/math.depaul.edu/\~{}bridget/patterns.html}.

\bibitem{west} J.~West, Permutations with forbidden subsequences and stack-sortable permutations, PhD thesis, MIT 1990.

\end{thebibliography}
\end{document}